\newtheorem{thm}{Theorem}
\newtheorem{prop}[thm]{Proposition}
\newtheorem{lem}[thm]{Lemma}
\newtheorem{cor}[thm]{Corollary}
\theoremstyle{remark}
\newtheorem{rem}[thm]{Remark}
\theoremstyle{definition}
\newcommand{\C}{\mathbb C}
\newcommand{\R}{\mathbb R}
\newcommand{\Z}{\mathbb Z}
\newcommand{\Id}{\mathrm{Id}}
\newcommand{\End}{\mathrm{End}}
\newcommand{\SSS}{\mathcal S^A(S)}
\newcommand{\SSSS}{\mathcal S^{\epsilon}(S)}
\newcommand{\SSSSS}{\mathcal S^A_{\mathrm s}}
\newcommand{\TT}{\mathcal T^\omega(\lambda)}
\newcommand{\TTT}{\mathcal T^{\iota}(\lambda)}
\newcommand{\RR}{\mathcal R_{\SL(\C)}(S)}
\newcommand{\RS}{\mathcal R_{\PSL(\C)}^{\mathrm{Spin}}(S)}
\newcommand{\Spin}{\mathrm{Spin}(S)}
\newcommand{\SL}{\mathrm{SL}_2}
\newcommand{\PSL}{\mathrm{PSL}_2}
\newcommand{\Tr}{\mathrm{Tr}}
\newcommand{\E}{\mathrm{e}}
\newcommand{\I}{\mathrm{i}}
\newcommand{\QBinom}[3] {\begin{pmatrix}{#1}\\{#2}\end{pmatrix}_{\kern -4pt{#3}}}
\newcommand{\QInt}[2]{\left(#1\right)_{\kern -1 pt{#2}}}
\newcommand{\QQInt}[2]{\left[#1\right]_{\kern 0 pt{#2}}}
\newcommand{\db}{/\kern -3pt/}
\renewcommand{\leq}{\leqslant}
\renewcommand{\geq}{\geqslant}
\renewcommand{\phi}{\varphi}
\renewcommand{\epsilon}{\varepsilon}
\title
[Representations of the skein algebra I]
{Representations of \\
the Kauffman bracket skein algebra I:\\ invariants and miraculous cancellations}
\author{Francis Bonahon}
\address {Department
of Mathematics,  University of
Southern California, Los Angeles
CA~90089-2532, U.S.A.}
\email{fbonahon@math.usc.edu}
\author{Helen Wong}
\address{Department
of Mathematics, Carleton College, Northfield MN 55057, U.S.A.}
\email{hwong@carleton.edu}
\thanks{This research was partially supported by grants DMS-0604866, DMS-1105402 and DMS-1105692  from the National Science Foundation, and by a mentoring grant from the Association for Women in Mathematics.}
\date{\today}
\begin{document}

\begin{abstract}
We study finite-dimensional  representations of the  Kauffman bracket skein algebra of a surface $S$. In particular, we construct invariants of such irreducible representations when  the underlying parameter $q=\E^{2\pi \I \hbar}$ is a root of unity. The main  one of these invariants is a point in the  character variety consisting of group homomorphisms from the fundamental group $\pi_1(S)$ to  $\SL(\C)$, or in a twisted version of this character variety. The proof relies on certain miraculous cancellations that occur for the quantum trace homomorphism constructed by the authors. These miraculous cancellations  also play a fundamental role in subsequent work of the authors, where  novel examples of representations of the skein algebra are constructed. 
\end{abstract}
\maketitle

For an oriented surface $S$ of finite topological type and for a Lie group $G$, many areas of mathematics involve the \emph{character variety}
$$
\mathcal R_G(S) = \{ \text{group homomorphisms } \pi_1(S) \to G \} \db G,
$$
where $G$ acts on homomorphisms by conjugation. For $G=\SL(\C)$, Turaev \cite{Tur} showed that the corresponding character variety $\RR$ can be quantized by the Kauffman bracket skein algebra of the surface; see also \cite{BFK1, BFK2, PrzS}. In fact, if one follows the physical tradition that a quantization of a space $X$ replaces the commutative algebra of functions on $X$ by a non-commutative algebra of operators on a Hilbert space, the points of an actual quantization of the character variety $\RR$ should be \emph{representations} of the Kauffman bracket skein algebra. 

This article studies finite-dimensional representations of the skein algebra of a surface. The \emph{Kauffman bracket skein algebra} $\SSS$  depends on a parameter $A=\E^{\pi\I \hbar}\in \C-\{0\}$, and is defined as follows. One first considers the vector space freely generated by  all isotopy classes of framed links in the thickened surface $S \times [0,1]$, and then one takes the quotient of this space by two relations: the main one is the  \emph{skein relation} that
$$
[K_1] = A^{-1} [K_0] + A [K_\infty]
$$
whenever the three links $K_1$, $K_0$ and $K_\infty\subset S\times [0,1]$ differ only in a little ball where they are as represented on Figure~\ref{fig:SkeinRelation}; the second relation states that $[O] = -(A^2+A^{-2})[\varnothing]$ for the trivial framed knot $O$ and the empty link $\varnothing$. The algebra multiplication is defined by  superposition of skeins. See \S \ref{sect:SkeinAlgebra} for details.

\begin{figure}[htbp]

\SetLabels
( .5 * -.4 ) $K_0$ \\
( .1 * -.4 )  $K_1$\\
(  .9*  -.4) $K_\infty$ \\
\endSetLabels
\centerline{\AffixLabels{\includegraphics{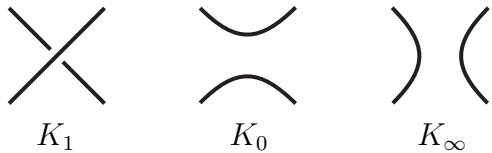}}}
\vskip 15pt
\caption{A Kauffman triple}
\label{fig:SkeinRelation}
\end{figure}

Our goal is to study representations of the skein algebra, namely algebra homomorphisms $\rho \colon \SSS \to \End(V)$ where $V$ is a finite-dimensional vector space over $\C$. See \cite{BonWon2} for an interpretation of such representations as generalizations of the Kauffman bracket invariant of framed links in $\R^3$. When $A$ is a root of unity, a well-known example of a finite-dimensional representation of the skein algebra  $\SSS$  arises from the Witten-Reshetikhin-Turaev topological quantum field theory associated to the fundamental representation of the quantum group $\mathrm U_q (\mathfrak{sl}_2)$ \cite{Witten, ReshTur, BHMV, TuraevBook}.

To analyze such representations, we need to find  a way to distinguish them.  We thus introduce invariants for these representations.   

We focus attention on the case where $A^2$ is a root of unity, and more precisely    where $A^2$ is a primitive $N$--root of unity with $N$ odd. While the restriction to roots of unity is natural, the parity condition for $N$  is forced on us by the mathematics. 

The main invariant comes from Chebyshev polynomials of the first kind. The \emph{$n$--th normalized Chebyshev polynomial of the first kind} is the polynomial $T_n(x)$  determined by the trigonometric identity that $2\cos n\theta = T_n(2\cos \theta)$.
More precisely, if $A^2$ is a primitive $N$-root of unity with $N$  odd, we consider the Chebyshev polynomial $T_N$.  We separate the discussion into two cases, depending on whether $A^N = +1$ or $A^N=-1$.  

\begin{thm}
\label{thm:ShadowIntroA=-1}
Suppose that $A^2$ is a primitive $N$--root of unity with $N$ odd, and that $A^N=-1$. Then, for every finite-dimensional irreducible representation $\rho \colon \SSS \to \End(V)$ of the skein algebra, there exists a unique character $r_\rho\in\RR$ such that
$$
T_N \bigl( \rho ([K]) \bigr) =- \bigl( \Tr\, r_\rho(K) \bigr) \Id_V
$$
for every framed knot $K\subset S \times [0,1]$ whose projection to $S$ has no crossing and whose framing is vertical. (Here, $\Tr\,r_\rho(K) \in \C$ denotes the trace of $r_\rho(K) \in \SL(\C)$.)
\end{thm}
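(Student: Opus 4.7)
The plan rests on a \emph{Chebyshev threading map}: under the hypothesis that $A^2$ is a primitive $N$-th root of unity with $N$ odd and $A^N = -1$, the assignment $K \mapsto T_N([K])$, obtained by substituting the skein element of a framed knot $K$ into the Chebyshev polynomial $T_N$ and reinterpreting the resulting polynomial by cabling along the framing, should extend to an algebra homomorphism from the classical skein algebra $\mathcal{S}^{-1}(S)$ into the \emph{center} of $\SSS$. That the image is central under these numerical hypotheses is the \emph{miraculous cancellation} advertised in the title and abstract, and I expect the proof of this centrality to be the principal technical obstacle.

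To establish centrality I would route the computation through the quantum trace homomorphism of Bonahon--Wong into a quantum Teichm\"uller / shear-coordinate space. Writing both $[K]$ and an arbitrary generator $[L]$ as finite sums of Laurent monomials in the quantum shear coordinates, the commutator $[T_N([K]),[L]]$ becomes a sum of monomials whose scalar coefficients are Laurent polynomials in $A$; the claim is that each such coefficient vanishes identically once $A^{2N}=1$ and $N$ is odd. The parity of $N$ is essential to control the signs coming from the framing and the Kauffman relation, and the hypothesis $A^N = -1$ is exactly what makes no additional sign correction necessary, which is ultimately what accounts for the minus sign on the right-hand side of the theorem.

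Once centrality is granted, the rest is formal. Irreducibility of $\rho$ and Schur's lemma force $\rho\bigl(T_N([K])\bigr) = f_\rho(K)\,\Id_V$ for some scalar $f_\rho(K)\in\C$, and the algebra homomorphism property of the threading map guarantees that $K \mapsto f_\rho(K)$ extends to a $\C$-algebra homomorphism $\mathcal{S}^{-1}(S) \to \C$. By the theorem of Bullock and Przytycki--Sikora identifying $\mathcal{S}^{-1}(S)$ with the coordinate ring of $\RR$ under the normalization $[K] = -\Tr\, r(K)$, this homomorphism corresponds to evaluation at a unique character $r_\rho \in \RR$. For a framed knot $K$ whose projection has no crossing and whose framing is vertical, this unpacks to $f_\rho(K) = -\Tr\, r_\rho(K)$, which is precisely the claimed identity; uniqueness of $r_\rho$ follows because a character in $\RR$ is determined by the traces along simple closed curves, which are exactly the skeins being tested. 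The entire weight of the theorem is thus concentrated in proving that the Chebyshev threading lands in the center of $\SSS$.
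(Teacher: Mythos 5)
Your overall architecture coincides with the paper's: thread $T_N$ to obtain an algebra homomorphism $\mathbf T^A\colon \mathcal S^{-1}(S)\to\SSS$ with central image, then combine Schur's lemma with the classification of algebra homomorphisms $\mathcal S^{-1}(S)\to\C$ to extract $r_\rho$, and use the generation of $\mathcal S^{-1}(S)$ by crossingless knots for uniqueness. The genuine gap is in how you propose to prove centrality. The quantum trace homomorphism exists only for punctured surfaces admitting an ideal triangulation, whereas the theorem covers closed $S$, so you cannot expand $[K]$ and $[L]$ in shear coordinates of $S$ itself. Moreover, even on a punctured surface the vanishing of the commutator coefficients is not a routine check: when $A^N=-1$ one has $\omega^{2N}=A^{-N}=-1$, so $N$--th powers of the Chekhov--Fock generators fail to be central whenever $\sigma_{ij}=\pm1$, and knowing which monomials actually occur in $\Tr_\lambda^\omega\bigl(T_N([K])\bigr)$ is precisely the hard computation (Theorem~\ref{thm:ChebyQTracesFrob}), not an input you can assume. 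The paper sidesteps both problems by localizing: a single crossing change between $K$ and $L$ is swallowed by an embedded thickened once-punctured torus (Theorem~\ref{thm:ChebyshevCentral} via Lemma~\ref{lem:CentralPuncturedTorus}), where the quantum trace is available and where $\sigma_{ij}=\pm2$ makes $N$--th powers genuinely central; iterating crossing changes gives centrality in any $S$.

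Two further steps you call formal are not. First, well-definedness of the threading map on $\mathcal S^{-1}(S)$ --- that Chebyshev threads satisfy the Kauffman relation with parameter $\epsilon=-1$ --- is independent of centrality and constitutes a second family of cancellations; the paper proves it by reducing an arbitrary Kauffman triple to model computations in the once-punctured torus and the twice-punctured plane (Theorem~\ref{thm:ChebSkeinRelation}, Lemmas~\ref{lem:ChebSkeinRelationPuncTorus} and~\ref{lem:ChebSkeinRelationPuncSphere}). Without this you have no homomorphism $\mathcal S^{-1}(S)\to\C$ to feed into the Bullock--Przytycki--Sikora/Helling identification. Second, with the skein algebra defined by the Kauffman relation alone there is one exceptional algebra homomorphism $\mathcal S^{-1}(S)\to\C$, killing every nonempty skein, which corresponds to no character; you must rule it out, which the paper does via the unknot: $\rho\bigl([O]\bigr)=-(A^2+A^{-2})\,\Id_V$ and $T_N(-A^2-A^{-2})=-2\neq0$ (Lemma~\ref{lem:ClassicalShadowNonTrivial}), using the standing convention that $\rho$ is nonzero on some nonempty skein --- without that convention the statement actually fails for the degenerate one-dimensional representation. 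Finally, a small inaccuracy: the minus sign on the right-hand side is the Bullock normalization $\Tr_r\bigl([K]\bigr)=-\Tr\,r(K)$ and is present in the $A^N=+1$ case too; what $A^N=-1$ buys is $\epsilon=A^{N^2}=-1$, so that ordinary $\SL(\C)$ characters suffice and no spin structure enters.
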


In particular, interpreting the elements of $\End(V)$ as matrices, many ``miraculous cancellations'' occur in the entries of $T_N \bigl( \rho ([K]) \bigr)$ when one evaluates the Chebyshev polynomial $T_N$ over the endomorphism $\rho\bigl ([K]\bigr)\in \End(V)$. A more general version of Theorem~\ref{thm:ShadowIntroA=-1} and of these miraculous cancellations, valid for all framed links in $S\times[0,1]$, is provided by Theorem~\ref{thm:Shadow} in \S\ref{sect:Shadow} below. 

In general, researchers working on the Kauffman bracket skein algebra (or on the representation theory of the quantum group $\mathrm U_q(\mathfrak{sl}_2)$) are more familiar with the normalized  Chebyshev polynomials of the second kind, defined as the polynomials  $S_n(x)$ such that $\sin (n+1)\theta = \sin \theta\, S_n(2\cos \theta)$. The occurrence of the other Chebyshev polynomials $T_n(x)$ is here somewhat surprising\footnote{However, see \cite{FroGel} for an earlier occurrence, as well as \cite{HavPos} for a related but different context.}.

There is a companion statement to Theorem~\ref{thm:ShadowIntroA=-1} when $A^N=+1$, except that it now involves a twisted product $\RS= \mathcal R_{\PSL(\C)}^0 (S)\widetilde\times \Spin$ of a component of the character variety $ \mathcal R_{\PSL(\C)} (S)$ with the set $\Spin$ of isotopy classes of spin structures on $S$. This twisted character variety $\RS$ is more natural than would appear at first glance; for instance, the monodromy of a hyperbolic metric on $S\times (0,1)$ determines an element of $\RS$. See \S \ref{sect:Shadow} for details.

The definition is designed so that, for every twisted character $r \in \RS$ and for every framed knot $K\subset S \times [0,1]$, there is a well-defined trace $\Tr\,r(K)\in \C$. 

\begin{thm}
\label{thm:ShadowIntroA=+1}
Suppose that $A^2$ is a primitive $N$--root of unity with $N$ odd, and that $A^N=+1$.  Then, for every finite-dimensional irreducible representation $\rho \colon \SSS \to \End(V)$ of the skein algebra, there exists a unique twisted character $r_\rho\in\RS$ such that
$$
T_N \bigl( \rho ([K]) \bigr) = -\bigl( \Tr\, r_\rho(K) \bigr) \Id_V
$$
for every framed knot $K\subset S \times [0,1]$ whose projection to $S$ has no crossing and with vertical framing. 
\end{thm}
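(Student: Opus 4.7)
The plan is to reduce Theorem~\ref{thm:ShadowIntroA=+1} to Theorem~\ref{thm:ShadowIntroA=-1}, exploiting the fact that when $N$ is odd one has $(-A)^N=-A^N$, so replacing $A$ by $-A$ interchanges the two sign cases of the theorem while preserving the hypothesis that $A^2$ is a primitive $N$--th root of unity. The price to pay for such a substitution is a sign ambiguity in the resulting character, and the geometric mechanism designed to absorb this ambiguity is precisely the spin structure appearing in $\RS$.

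First, I would construct, for each spin structure $\sigma\in\Spin$, an algebra isomorphism $\Phi_\sigma \colon \SSS \to \mathcal S^{-A}(S)$ sending the class $[L]$ of a framed link $L\subset S\times[0,1]$ to $(-1)^{\mu_\sigma(L)}[L]$, where $\mu_\sigma(L)\in\Z/2$ is a spin-refined mod-$2$ invariant of $L$ chosen to absorb the discrepancy between the two skein relations. Verifying that such a $\Phi_\sigma$ exists and is a well-defined algebra isomorphism is a combinatorial check on the Kauffman triple, with the sign construction patterned on the classical spin refinement of the Kauffman bracket.

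Next, since $(-A)^N=-1$, Theorem~\ref{thm:ShadowIntroA=-1} applies to the pulled-back representation $\rho_\sigma := \rho\circ\Phi_\sigma^{-1} \colon \mathcal S^{-A}(S) \to \End(V)$, producing a unique character $\tilde r_{\rho,\sigma}\in\RR$ with
$$
T_N\bigl(\rho_\sigma([K])\bigr) = -\bigl(\Tr\,\tilde r_{\rho,\sigma}(K)\bigr)\,\Id_V
$$
for every vertically framed crossingless knot $K$. Pulling this identity back through $\Phi_\sigma$ introduces a sign $(-1)^{\mu_\sigma(K)}$ on the left-hand side, and after rearrangement I would set $r_\rho\in\RS$ to be the class of the pair $(\tilde r_{\rho,\sigma},\sigma)$ under the equivalence relation built into $\RS$. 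One must then check that this class is independent of $\sigma$. Uniqueness of $r_\rho$ follows because trace functions of knots separate points of $\RS$, exactly as for $\RR$.

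The principal obstacle is identifying the combinatorial sign $(-1)^{\mu_\sigma(K)}$ with the topological twisting that defines $\RS$ in Section~\ref{sect:Shadow}: modifying $\sigma$ must alter $\tilde r_{\rho,\sigma}$ by the natural $H^1(S;\Z/2)$ action on lifts to $\SL(\C)$ of a fixed $\PSL(\C)$--character, and the quotient by this action combined with the sign twist must reproduce $\RS$. Once this compatibility between $\mu_\sigma$ and the twisted product $\mathcal R_{\PSL(\C)}^0(S)\widetilde\times\Spin$ is established, the statement of Theorem~\ref{thm:ShadowIntroA=+1} follows formally from Theorem~\ref{thm:ShadowIntroA=-1}.
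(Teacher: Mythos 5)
Your proposal is correct in outline, but it proves the theorem by a genuinely different route than the paper. The paper treats the cases $A^N=\mp1$ uniformly: it composes $\rho$ with the Chebyshev homomorphism $\mathbf T^A\colon\SSSS\to\SSS$, uses centrality of Chebyshev threads and Schur's lemma to produce $\kappa_\rho\colon\mathcal S^{+1}(S)\to\C$, and then classifies such homomorphisms (Proposition~\ref{prop:Helling}), invoking Barrett's isomorphism only at the classical level $B_\sigma\colon\mathcal S^{+1}(S)\to\mathcal S^{-1}(S)$ together with Helling's theorem; this yields the stronger Theorem~\ref{thm:Shadow}, valid for all skeins. You instead take Theorem~\ref{thm:ShadowIntroA=-1} as a black box and transport $\rho$ through Barrett's isomorphism at the quantum parameter, $\Phi_\sigma\colon\SSS\to\mathcal S^{-A}(S)$, using that $N$ odd gives $(-A)^N=-1$ while $(-A)^2=A^2$; this is legitimate (no circularity, since the paper's proof of Theorem~\ref{thm:ShadowIntroA=-1} does not use Theorem~\ref{thm:ShadowIntroA=+1}), and the deferred details are exactly Barrett's theorem \cite{Barr}, valid for arbitrary $A$, plus the $H^1(S;\Z_2)$--equivariance bookkeeping of \S\ref{sect:Shadow}. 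When writing it up, make two sign points explicit: (i) $T_N$ is an odd polynomial for $N$ odd, which is what lets you move $(-1)^{\mu_\sigma(K)}$ through $T_N$; (ii) the skein-relation check forces Barrett's sign to be $(-1)^{k+\sigma(L)}$ with $k$ the number of components (the unadorned $(-1)^{\sigma(L)}$ fails), and for a knot the extra $(-1)$ is exactly what produces the minus sign in the displayed formula, matching the convention $\Tr_r([K])=(-1)^{\sigma(K)}\Tr\,\widehat r(K)$. Uniqueness is cleanest by noting that any $r\in\RS$ satisfying the conclusion, written with second coordinate $\sigma$, gives a character in $\RR$ satisfying Theorem~\ref{thm:ShadowIntroA=-1} for $\rho_\sigma$, so uniqueness there applies verbatim. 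Your route makes the $A^N=+1$ case a formal corollary of the $A^N=-1$ case; the paper's route gives, in one uniform argument, the stronger statement for all framed links.
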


Again, a more general version of Theorem~\ref{thm:ShadowIntroA=+1}  for all framed links in $S\times[0,1]$ is provided by Theorem~\ref{thm:Shadow} in \S\ref{sect:Shadow}.

For a finite-dimensional irreducible representation $\rho \colon \SSS \to \End(V)$, Theorems~\ref{thm:ShadowIntroA=-1} and \ref{thm:ShadowIntroA=+1} both associate to this quantum object a point in the character variety $\RR$ or $\RS$, which in particular is a classical (= non-quantum) geometric  object. We call $r_\rho\in \RR$ or $\RS$ the \emph{classical shadow} of the representation $\rho \colon \SSS \to \End(V)$.

For instance, when $A^N=-1$, we can consider the  finite-dimensional representation $\rho_{\mathrm{WRT}} \colon \SSS \to \End(V)$ provided by  the $\mathrm{SO}(3)$ version of the Witten-Reshetikhin-Turaev topological quantum field theory \cite{ReshTur, BHMV, TuraevBook}. An easy variation \cite{BonWon5} of the arguments of \cite{RobertsIrred} shows that this representation $\rho_{\mathrm{WRT}}$ is irreducible. In \cite{BonWon5}, we show that its  classical shadow  in $\RR$ is the character corresponding to the trivial homomorphism $\pi_1(S) \to \{\Id\} \subset \SL(\C)$.

The key to the proof of Theorems~\ref{thm:ShadowIntroA=-1} and \ref{thm:ShadowIntroA=+1} is that, for every knot $K\subset S \times[0,1]$ with no crossing and with vertical framing, the evaluation $T_N\bigl([K]\bigr)$ of the Chebyshev polynomial $T_N$ at the element $[K]\in \SSS$ is central in $\SSS$. 

When the surface $S$ is non-compact, there are central elements that are easier to identify, and provide more invariants. Let $P_k$ be a small simple loop that goes around the $k$--th puncture in $S$. Consider $P_k$ as a knot in $S\times [0,1]$, and endow it with the vertical framing. It is immediate that $[P_k]\in \SSS$ is central in $\SSS$, and the following result easily follows. 

\begin{prop}
\label{prop:PunctInvIntro}
 For every finite-dimensional irreducible representation $\rho \colon \SSS \to \End(V)$, there exists a number $p_k\in \C$ such that
$
\rho\bigl([P_k]\bigr) = p_k\, \Id_V
$.

Suppose in addition that $A^2$ is a primitive $N$--root of unity with $N$ odd, and that  $r_\rho\in \RR$ or $\RS$ is the classical shadow associated to $\rho$ by Theorems~{\upshape\ref{thm:ShadowIntroA=-1}} or {\upshape\ref{thm:ShadowIntroA=+1}}. Then,
$
T_N (p_k) = -\Tr\, r_\rho(P_k)
$.
\end{prop}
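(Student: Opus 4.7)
The plan is straightforward and splits into two parts: first establishing that $[P_k]$ is central in $\SSS$ (which yields the scalar form of $\rho([P_k])$ via Schur's lemma), then specializing the Chebyshev identity of Theorems~\ref{thm:ShadowIntroA=-1}--\ref{thm:ShadowIntroA=+1} to the knot $P_k$ itself.

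For centrality, I would argue geometrically in $S\times[0,1]$. Isotope $P_k$ into an arbitrarily small once-punctured disk neighborhood $U$ of the $k$-th puncture. Given any framed link $L\subset S\times[0,1]$, a generic isotopy in $S$ pushes the projection of $L$ out of $U$, so that the projections of $P_k$ and $L$ to $S$ are disjoint. In particular, no crossings involve $P_k$, and one can perform an ambient isotopy of $S\times[0,1]$ that slides $P_k$ vertically through $L$ without any intersection. This shows that the two stackings representing $[P_k]\,[L]$ and $[L]\,[P_k]$ are isotopic as framed links, so they define the same element of $\SSS$. Hence $[P_k]$ is central. Applying Schur's lemma to the finite-dimensional irreducible complex representation $\rho$ then gives $\rho([P_k]) = p_k\,\Id_V$ for some scalar $p_k\in\C$.

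For the second statement, I would apply Theorem~\ref{thm:ShadowIntroA=-1} (or \ref{thm:ShadowIntroA=+1}, according as $A^N=-1$ or $+1$) to the framed knot $K=P_k$, whose projection to $S$ is a simple loop with no crossings and whose framing is vertical. The conclusion of the theorem reads
$$
T_N\bigl(\rho([P_k])\bigr) = -\bigl(\Tr\,r_\rho(P_k)\bigr)\Id_V.
$$
Substituting $\rho([P_k])=p_k\,\Id_V$ and using that $T_N(p_k\,\Id_V)=T_N(p_k)\,\Id_V$ (since $T_N$ is a polynomial and $p_k\,\Id_V$ is a scalar endomorphism) yields $T_N(p_k)\,\Id_V = -\bigl(\Tr\,r_\rho(P_k)\bigr)\Id_V$, and comparing scalars gives the stated equality.

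There is no serious obstacle here; the only point demanding any care is the ambient-isotopy argument for centrality, where one must verify that pushing $P_k$ into a puncture-neighborhood really does allow it to be reordered vertically with respect to an arbitrary framed link $L$ without introducing crossings. Once centrality is granted, Schur's lemma and a direct substitution into the shadow identity finish the proof immediately, which is consistent with the paper's assertion that the result ``easily follows''.
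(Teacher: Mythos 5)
Your argument is correct and follows essentially the same route as the paper: the paper also treats the centrality of $[P_k]$ as immediate (Proposition~\ref{prop:PunctCentralElts} is stated without proof), invokes Schur's lemma to get $\rho([P_k])=p_k\,\Id_V$, and proves the identity $T_N(p_k)=-\Tr\,r_\rho(P_k)$ in Lemma~\ref{lem:PunctInvShadow} by exactly your substitution, only phrased through the body version (Theorem~\ref{thm:Shadow}) of the shadow theorems rather than the introductory statements. No gaps.
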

The numbers $p_1$, $p_2$, \dots, $p_s$ thus associated to the representation $\rho$ are the \emph{puncture invariants} of $\rho$. The second part of Proposition~\ref{prop:PunctInvIntro} shows that, up to finitely many choices, these puncture invariants are essentially determined by the classical shadow of $\rho$. 

Thus, we have extracted two types of invariants from  a finite-dimensional irreducible representation $\rho\colon \SSS \to \End(V)$  of the skein algebra:  the classical shadow of $\rho$ in $\RR$ or $\RS$, according to whether $A^N=-1$ or $+1$;  the puncture invariants $p_k\in \C$.

The articles  \cite{BonWon3, BonWon4}, which are the natural continuation of this one (see also the expository article \cite{BonWon2}), provide a converse statement.  More precisely, suppose that $A^2$ is a primitive $N$--root of unity with $N$ odd and  that we are given the following data:  a character $r \in \RR$ if $A^N=-1$, or  a twisted character $r\in\RS$ if $A^N = +1$;  a number $p_k\in \C$ for each puncture of $S$ such that  $T_N(p_k) = -\Tr\, r(P_k)$, where $P_k$ is a small loop going around the puncture endowed with the vertical framing. Then \cite{BonWon3, BonWon4} provide a finite-dimensional irreducible representation $\rho\colon \SSS \to \End(V)$ whose classical shadow is equal to $r$ and whose puncture invariants are equal to the $p_k$.

The construction of irreducible representations in \cite{BonWon3, BonWon4} uses the {quantum trace homomorphism} $\Tr_\lambda^\omega$  constructed in \cite{BonWon1}, and defined when the surface $S$ has at least one puncture and negative Euler characteristic. (The article \cite{BonWon4} uses punctured surfaces as a tool to construct representations of  the skein algebras of closed surfaces). For such a punctured surface $S$  and for $\omega=A^{-\frac12}$, the \emph{quantum trace homomorphism} $\Tr_\lambda^\omega\colon\SSS \to \TT$ embeds the skein algebra $\SSS$ in an incarnation  $\TT$  of the quantum Teichm\"uller space of Chekhov, Fock \cite{Foc, CheFoc1, CheFoc2} and Kashaev \cite{Kash}, associated to an ideal triangulation $\lambda$ of $S$. In the special case where $A=1$, the quantum trace $\Tr^{\pm1}_\lambda \bigl ( [K] \bigr) $ is just the Laurent polynomial expressing, for $r\in \RS$, the trace $\Tr\,r(K)$ in terms of suitable square roots $z_i = \sqrt{x_i}$ of the shear-bend coordinates  of $r$ with respect to $\lambda$ (if these exist); a similar property holds when $A=-1$, connecting $\Tr^{\pm\mathrm i}_\lambda \bigl ( [K] \bigr) $ to the classical  trace $\Tr\,r(K)$ for $r\in \RR$.  See \cite[\S1]{BonWon1}. 

The algebraic structure of the quantum Teichm\"uller space $\TT$ is relatively simple, and its finite-dimensional irreducible representations are easily classified \cite{BonLiu}. 
 Composing these representations $\TT \to \End(V)$ with the quantum trace homomorphism $\Tr_\lambda^\omega\colon\SSS \to \TT$, one obtains   many representations $\SSS \to \End(V)$. 
 
 This motivates the second part of the present article (in addition to the fact that this second part is used to prove the results of the first part). When constructing representations of the skein algebra from representations of the quantum Teichm\"uller space, the challenge is to control the classical shadow of the representations so constructed. Recall that this classical shadow is defined using the Chebyshev polynomial $T_N$. 
It turns out that the quantum trace homomorphism is extremely well behaved with respect to $T_N$.  This is expressed in the following result, which  provides  more miraculous cancellations.

\begin{thm}
\label{thm:ChebFrobTraceIntro}
Let the surface $S$ be non-compact and with negative Euler characteristic. 
Suppose that $A^2$ is a primitive $N$--root of unity with $N$ odd, and set $\epsilon = A^N=\pm1$ and $\iota=\sqrt\epsilon\in \{ \pm1, \pm\mathrm i\}$. Then, for every knot $K\subset S \times [0,1]$ whose projection to $S$ has no crossing and with vertical framing, the element
$
  \Tr_\lambda^\omega   \bigl(  T_N ([K]) \bigr)\in \TT
$
is obtained from the classical trace polynomial $\Tr_\lambda^\iota\bigl ([K]\bigr)\in \TTT$ (corresponding to the case where $A$ is replaced by $\epsilon=\pm1$) by replacing each generator $Z_i$ by $Z_i^N$. 
\end{thm}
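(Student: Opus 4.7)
The plan is to interpret the theorem as a Chebyshev-Frobenius phenomenon linking the quantum trace at parameter $\omega$ to its ``classical'' counterpart at parameter $\iota$. First I introduce an algebra homomorphism
$$F_N\colon \TTT\longrightarrow\TT,\qquad Z_i\longmapsto Z_i^N.$$
Well-definedness reduces to the identity $\omega^{2N^2}=\iota^2$ on commutation exponents, and this follows from $\omega^2=A^{-1}$, $A^N=\epsilon$, $\iota^2=\epsilon$, together with the fact that $N$ is odd (so that $\epsilon^N=\epsilon$). With $F_N$ in hand, the theorem is the assertion
$$\Tr_\lambda^\omega\bigl(T_N([K])\bigr)=F_N\bigl(\Tr_\lambda^\iota([K])\bigr).$$

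Next I unfold the construction of $\Tr_\lambda^\omega$ from \cite{BonWon1} for this class of knots. For a knot $K$ whose projection $\gamma$ onto $S$ is embedded and which has vertical framing, $\Tr_\lambda^\omega([K])$ is assembled locally: after putting $\gamma$ in general position relative to $\lambda$, one encodes each arc of $\gamma$ sitting inside a triangle as a $2\times 2$ matrix whose entries are monomials in the $Z_i$ and $\omega$, and takes the matrix trace of the cyclic product $M^\omega_1 M^\omega_2\cdots M^\omega_k$ of these local matrices. The same recipe at parameter $\iota$ produces matrices $M^\iota_1,\dots,M^\iota_k$ whose entries are the corresponding monomials at parameter $\iota$, related to those of $M^\omega_i$ in such a way that $F_N$ sends each entry of $M^\iota_i$ to the $N$-th power of the corresponding entry of $M^\omega_i$. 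The theorem then reduces to the purely algebraic matrix identity
$$T_N\bigl(\Tr(M^\omega_1\cdots M^\omega_k)\bigr)=\Tr\bigl(F_N(M^\iota_1)\cdots F_N(M^\iota_k)\bigr),$$
with $F_N$ applied to each $M^\iota_i$ entrywise.

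The third step is to establish this matrix identity as a quantum refinement of the classical fact $T_N(\Tr M)=\Tr(M^N)$ valid for $M\in\SL(\C)$. I would compute $(M^\omega_1\cdots M^\omega_k)^N$ by distributing the product, and verify that after all miraculous cancellations the trace of this $N$-th power assembles into $F_N$ of the classical trace $\Tr(M^\iota_1\cdots M^\iota_k)$: the ``diagonal'' monomials contribute the expected $N$-th-power terms, while every other class of monomials aggregates under the trace with coefficients that are nontrivial sums of powers of $\omega$ cancelling to zero. These cancellations are driven by the relation $\omega^{2N^2}=\iota^2$ together with the quantum-$\SL_2$ determinant-one structure of each $M^\omega_i$.

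The main obstacle will be the combinatorial bookkeeping of these cancellations, since the entries of the $M^\omega_i$ do not commute with each other and the $N$-th power produces a proliferation of cross terms. The cleanest route is likely an auxiliary lemma, purely internal to quantum $\SL_2$, asserting that the Chebyshev-Frobenius identity holds for any sufficiently generic product of quantum $2\times 2$ matrices of determinant one, and then applying it blindly to the edge matrices arising from the quantum trace construction.
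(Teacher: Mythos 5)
Your reduction of the statement to a matrix identity is plausible in outline, but the proposal stops exactly where the theorem begins. The step ``compute $(M^\omega_1\cdots M^\omega_k)^N$ by distributing the product and verify that after all miraculous cancellations the trace assembles into $F_N$ of the classical trace'' is not an argument: those cancellations are the entire content of the theorem, they hold only when $A^4$ is a primitive $N$--root of unity (for generic $A$ the asserted identity is simply false, so no formal or ``generic'' manipulation can produce it), and nothing in the proposal exhibits them. Likewise, the classical fact $T_N(\Tr M)=\Tr(M^N)$ for $M\in\SL(\C)$ has no ready-made analogue for $2\times 2$ matrices with entries in a quantum torus: the entries of the local matrices do not commute, $\Tr\bigl((M^\omega_1\cdots M^\omega_k)^N\bigr)$ is not obviously related to applying $Z_i\mapsto Z_i^N$ entrywise, and even stating a correct ``quantum determinant one'' hypothesis and the correct ordering conventions (Weyl ordering, elevations of boundary points) is part of the work. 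The proposed ``auxiliary lemma, purely internal to quantum $\SL_2$'' is therefore essentially a restatement of the theorem itself; it cannot be ``applied blindly'' because no proof of it is given or sketched beyond the intention to expand and cancel.

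For comparison, the paper proves the cancellations by a quite different mechanism: it writes $T_N=S_N-S_{N-2}$, realizes the threaded polynomials $S_N$ via Jones--Wenzl idempotents placed over each triangle, computes their quantum traces explicitly in the biangle and the triangle in terms of quantum binomial coefficients for generic $A$, and then lets $A^4$ tend to a primitive $N$--root of unity, showing by a count of the vanishing quantum integers $\QInt{N}{A^4}$ in numerators and denominators that all terms cancel except the monomials in which every exponent is $\pm N$; comparing the surviving sum with the formula for $\Tr_\lambda^\iota\bigl([K]\bigr)$ then gives the Frobenius relation. A matrix-level ``Chebyshev--Frobenius'' identity of the kind you posit can indeed be formulated and proved, but it is a substantial theorem in its own right, not a routine verification. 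To repair the proposal you would need to either carry out the cancellation analysis (for instance along the paper's route, with the state-sum and limit arguments made explicit, including the non-$\lambda$-simple bookkeeping) or supply a complete proof of your auxiliary lemma; as written, the key step is asserted rather than proved.
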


Since $\Tr_\lambda^\omega$ is an algebra homomorphism, $ \Tr_\lambda^\omega   \bigl(  T_N ([K]) \bigr)$ is also equal to  the evaluation $ T_N \bigl(  \Tr_\lambda^\omega  ([K]) \bigr)$ of the Chebyshev polynomial $T_N$ over the quantum trace $   \Tr_\lambda^\omega  ([K]) $. 

When $A$ is not a root of unity,  $ \Tr_\lambda^\omega   \bigl(  T_N ([K]) \bigr)$ is a Laurent polynomial in non-commuting variables $Z_i$ and, as $N$ tends to $\infty$, the number of its terms grows  as $cN^k$ for appropriate constants $c$, $k>0$. A consequence of Theorem~\ref{thm:ChebFrobTraceIntro} is that, if we specialize $A$ to an appropriate root of unity, most of these terms disappear and we are just left with a constant number of monomials, all of which involve only $N$--th powers of the generators $Z_i$. 

To some extent, the miraculous cancellations of Theorem~\ref{thm:ChebFrobTraceIntro} are even more surprising than those of Theorems~\ref{thm:ShadowIntroA=-1} and \ref{thm:ShadowIntroA=+1}, which at least are conceptually explained by the fact that certain elements of $\SSS$ are central. We use  special cases of Theorem~\ref{thm:ChebFrobTraceIntro} to prove  Theorems~\ref{thm:ShadowIntroA=-1} and \ref{thm:ShadowIntroA=+1}. Our proof of Theorem~\ref{thm:ChebFrobTraceIntro}  essentially relies on brute force, and is quite unsatisfactory in this regard. It seems that this result is quite likely to be grounded in deeper facts about the representation theory of the quantum group $\mathrm U_q(\mathfrak{sl}_2)$. 

While this paper was under review, Thang L\^e provided a simpler proof \cite{Le} of Theorems~\ref{thm:ShadowIntroA=-1} and \ref{thm:ShadowIntroA=+1} (or more precisely of Theorems~\ref{thm:ChebyshevCentral} and \ref{thm:ChebSkeinRelation}, which are the key properties underlying these two statements) that uses only  skein calculus techniques, and in particular does not rely on Theorem~\ref{thm:ChebFrobTraceIntro}. 

\medskip

The authors are grateful to the referees for several useful suggestions. See in particular Remark~\ref{rem:NewSkeinDefinition}. 

\section{Constants}
The article involves various fractional powers of a non-zero complex number $q= \E^{2\pi\I\hbar}\in \C$. Systematically writing these constants as powers of $q$ would lead to somewhat cumbersome notation, such as $\mathcal S^{q^{-\frac12}}\kern -2pt(S)$ or $\mathcal T^{q^{\frac14 N^2}}\kern -4pt(\lambda)$.  It is more convenient (and consistent with the weight of history) to introduce a few more constants related to $q$. While we will regularly remind the reader of the definition of these constants, we collect these definitions here so that this section can be used as an easy reference if needed. 

Again, $q\in \C-\{0\}$ is a non-zero complex number. We choose successive square roots   $A=\sqrt{q^{-1}}$ and $\omega =\sqrt{A^{-1}} = q^{\frac14}$.

For most of the article, $q$ is assumed to be a root of unity, with various restrictions.  The strongest restriction used, under which all properties hold, is that $q$ is a primitive $N$--root of unity with $N$ odd. This is sometimes eased to the weaker assumption that $q^2=A^{-4}$ is a primitive $N$--root of unity, with no parity condition on $N$.

Under any of the above hypotheses, we will set $\epsilon = A^{N^2}$ and $\iota=\omega^{N^2}$. Note  that $\epsilon = \pm1$ if $A^{2N}=1$, and that $\iota^2=\epsilon^{-1}$.

\section{The Kauffman bracket skein algebra}
\label{sect:SkeinAlgebra}

Let $S$ be an oriented surface (without boundary) with finite topological type. Namely, $S$ is obtained by removing finitely many points (possibly none) from a compact oriented surface $\bar S$. We consider \emph{framed links} in the thickened surface $S\times [0,1]$, namely unoriented 1--dimensional submanifolds $K\subset S \times [0,1]$ endowed with a continuous choice of a vector transverse to $K$ at each point of $K$. 
A \emph{framed knot} is a connected framed link. 

The following definition provides a convenient way to describe a framing, in particular when representing a link by a picture. If the projection of $K \subset S \times[0,1]$ to $S$ is an immersion, the \emph{vertical framing} for $K$ is the framing that  everywhere is parallel to the $[0,1]$ factor and points towards $1$. 

The \emph{framed link algebra} $\mathcal K(S)$ is the vector space (over $\C$, say) freely generated by the isotopy classes of  all framed links $K \subset S \times [0,1]$.  
This vector space $\mathcal K(S)$ can be endowed with a multiplication, where the product of $K_1$ and $K_2$ is defined by the framed link $K_1   K_2 \subset S\times[0,1]$ that is the union of $K_1$ rescaled in $S\times [0, \frac12]$ and $K_2$ rescaled in $S\times [\frac12, 1]$. In other words, the product $K_1  K_2$ is defined by superposition of the framed links $K_1$ and $K_2$. 
This \emph{superposition operation} is compatible with isotopies, and therefore provides a well-defined algebra structure on $\mathcal K(S)$. 

Three framed links $K_1$, $K_0$ and $K_\infty$ in $S\times[0,1]$ form a \emph{Kauffman triple} if the only place where they differ is above a small disk in $S$, where they are as represented in Figure~\ref{fig:SkeinRelation} (as seen from above) and where the framing is vertical and pointing upwards (namely the framing is parallel to the $[0,1]$ factor and points towards $1$). 

The \emph{trivial framed knot} is represented by the boundary $O$ of a disk embedded in $S\times\{\frac12\}$, with vertical framing.

For $A\in \C-\{0\}$, the \emph{Kauffman bracket skein algebra} $\SSS$ is the quotient of the framed link algebra $\mathcal K(S)$ by the ideal generated by:
\begin{itemize}
\item all elements $K_1 - A^{-1}K_0 - A K_\infty$ as $(K_1, K_0, K_\infty)$ ranges over all Kauffman triples;
\item the element $O + (A^2 +A^{-2})[\varnothing]$ where $O$ is the trivial framed knot and $\varnothing$ is the empty link. 
\end{itemize} 
The superposition operation descends to a multiplication in $\SSS$, endowing $\SSS$ with the structure of an algebra. The class $[\varnothing]$ of the empty link is an identity element in $\SSS$. 

 An element $[K]\in \SSS$, represented by a framed link $K \subset S \times [0,1]$, is a \emph{skein} in $S$. The construction is defined to ensure that the \emph{skein relation}
 $$
 [K_1] = A^{-1}[K_0] + A [K_\infty]
 $$
 holds in $\SSS$ for every Kauffman triple $(K_1, K_0, K_\infty)$, while
 $$
 [K\cup O]= -(A^2 +A^{-2}) [K]
 $$
 whenever $O$ is the boundary of a disk disjoint from the link $K$ and is endowed with a framing transverse to that disk.

\begin{rem}
\label{rem:NewSkeinDefinition}
The relation $[K\cup O]= -(A^2 +A^{-2}) [K]$  is almost a consequence of the skein relation. Earlier versions of this article (as well as \cite{BonWon1, BonWon2}) omitted it in the definition of the skein algebra, as this convention seemed more natural. The resulting skein algebra $\widehat{\mathcal S}^A(S)$ is only mildly larger, as there is a natural linear isomorphism $\widehat{\mathcal S}^A(S) \cong \SSS \oplus  \R$ where the $\R$ factor is generated by $ [O] + (A^2 +A^{-2})[\varnothing]$ for the trivial framed knot $O$; indeed, a simple string manipulation shows that  $ \bigl( [O]  + A^2 +A^{-2} \bigr) [K]=0$ in $\widehat{\mathcal S}^A(S)$ for every  non-empty link $K$ (see for instance \cite[Lemmas~3.2 and 3.3]{Lick}). While going in this way against the weight of tradition seemed more natural and pedagogic, a particularly insightful referee made us observe that a key ingredient in our arguments, the injectivity of the quantum trace map $\Tr_\lambda^\omega \colon \SSSSS(S) \to \TT$ (and the corresponding statement  \cite[Proposition~29]{BonWon1}) does not  hold if the skein algebra $\SSS$ is replaced by $\widehat{\mathcal S}^A(S)$. We have consequently reverted to the more traditional convention. 
\end{rem}

\section{Central elements of the skein algebra}
\label{sect:CentralElts}

\subsection{Central elements coming from the punctures}
The simplest central elements of the skein algebra $\SSS$ come from the punctures of $S$, if any.

\begin{prop}
\label{prop:PunctCentralElts}
If $P_k$ is a small embedded loop  in $S$ going once around the $k$--th puncture. Considering $P_k$ as a knot in $S\times[0,1]$ endowed with the vertical framing, the skein $[P_k]$ is central in $\SSS$, for every value of $A\in \C-\{0\}$. \qed
\end{prop}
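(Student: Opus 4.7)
The plan is to show that, for every framed link $L \subset S \times [0,1]$, the equality $[P_k][L] = [L][P_k]$ already holds at the level of the framed link algebra $\mathcal K(S)$, hence descends to $\SSS$ for every value of $A$. Since skeins of the form $[L]$ span $\SSS$, this will establish centrality.

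The first step is to exploit the fact that the puncture encircled by $P_k$ has literally been removed from $\bar S$. The projection $\pi(L) \subset S$ of the compact link $L$ is compact, so it misses some open neighborhood $U \subset S$ of this puncture (viewed as a point of $\bar S \setminus S$). Since $P_k$ is, by definition, isotopic in $S$ to any sufficiently small embedded loop around the puncture, I can replace $P_k$ by such a small loop contained in $U$, still with vertical framing. After this replacement, the projections of $P_k$ and $L$ to $S$ are disjoint.

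The second step is the geometric observation that, once $\pi(P_k) \cap \pi(L) = \varnothing$, the vertical level of $P_k$ within $[0,1]$ becomes irrelevant: $P_k$ sits inside the cylinder $U \times [0,1]$, which is disjoint from the cylinder $\pi(L) \times [0,1]$ containing $L$. Consequently, sliding $P_k$ up or down in the $[0,1]$ factor defines an isotopy of framed links that never creates any intersection with $L$ and manifestly preserves the vertical framing on $P_k$. The two superpositions defining $[P_k][L]$ and $[L][P_k]$, namely $P_k$ in $S \times [0,\tfrac12]$ with $L$ in $S \times [\tfrac12,1]$ versus the opposite arrangement, are therefore isotopic as framed links, which gives the desired equality in $\mathcal K(S)$.

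This argument bypasses the skein relation entirely, which is why the conclusion is valid for every $A \in \C - \{0\}$. No step is a serious obstacle; the only item deserving a moment of care is checking that the shrinking isotopy for $P_k$ and the vertical slide both preserve vertical framing, which is automatic because both loops project injectively to $S$ and the vertical framing is defined solely through the projection to $S$.
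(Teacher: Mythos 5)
Your argument is correct, and it is exactly the standard isotopy argument the paper has in mind when it states this proposition with no written proof: after isotoping $P_k$ into a small product neighborhood of the puncture disjoint from the projection of $L$, the vertical slide shows the two superpositions are isotopic framed links. Your observation that the equality already holds in the framed link algebra $\mathcal K(S)$, before imposing any skein relation, is precisely why the conclusion is valid for every $A \in \C - \{0\}$.
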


These are somewhat obvious elements of the center of $\SSS$. When $A$ is a root of unity, this center contains other elements which are much less evident, defined using Chebyshev polynomials. 

\subsection{Chebyshev polynomials}
\label{sect:Chebyshev}
Chebyshev polynomials of the first and second kind play a prominent r\^ole in this article.  Both types are polynomials $P_n(x)$ that satisfy the recurrence relation
$$
P_n(x) = xP_{n-1}(x) - P_{n-2}(x).
$$
The (normalized) \emph{Chebyshev polynomials of the first kind} are the polynomials $T_n(x)$ defined by this recurrence relation and by the initial conditions that $T_0(x)=2$ and $T_1(x)=x$. The (normalized)  \emph{Chebyshev polynomials of the second kind} $S_n(x)$ are similarly defined by the above recurrence relation and by the same initial condition $S_1(x)=x$, but differ in the other initial condition $S_0(x)=1$. 

For instance, $T_2(x)=x^2 -2$, $T_3(x) = x^3-3x$ and $T_4(x) = x^4-4x^2+2$,  while $S_2(x)=x^2-1$, $S_3(x)=x^3-2x$ and $S_4(x)=x^4-3x^2+1$. 

One reason why the occurrence of Chebyshev polynomials in this work is not completely unexpected is that both types are closely related to the trace function in $\SL(\C)$, through the following classical properties.

\begin{lem}
\label{lem:Chebyshev}
For any $M\in \SL(\C)$, 
\begin{enumerate}
\item $ \Tr \,M^n = T_n( \Tr\, M) $;
\item  If $\rho_n\colon \SL(\C) \to \mathrm{GL}_{n+1}(\C)$ is the unique $(n+1)$--dimensional irreducible representation of $\SL(\C)$, then $\Tr\, \rho_n(M) = S_n (\Tr\, M)$. \qed
\end{enumerate}
\end{lem}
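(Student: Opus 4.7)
Both parts will follow quickly from the Cayley--Hamilton identity in $\SL(\C)$, namely $M^2=(\Tr M)M - \Id$, which holds since $M$ has characteristic polynomial $X^2 - (\Tr M)X + \det M = X^2-(\Tr M)X + 1$.

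For part (1), the plan is to multiply Cayley--Hamilton by $M^{n-2}$ to get
$$
M^n = (\Tr M)\, M^{n-1} - M^{n-2},
$$
and then take the trace of both sides. This produces the recurrence
$$
\Tr M^n = (\Tr M)\,\Tr M^{n-1} - \Tr M^{n-2},
$$
which is exactly the recurrence defining $T_n$ evaluated at $x=\Tr M$. Since $\Tr M^0 = \Tr \Id = 2 = T_0(\Tr M)$ and $\Tr M^1 = \Tr M = T_1(\Tr M)$, induction finishes (1).

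For part (2), I would reduce to the Zariski-dense set of diagonalizable $M\in\SL(\C)$ and then extend by continuity, since both sides of $\Tr \rho_n(M) = S_n(\Tr M)$ are polynomial functions of the entries of $M$. Realize the irreducible $(n+1)$-dimensional representation as the $n$-th symmetric power of the defining representation, so that if $M$ has eigenvalues $\lambda,\lambda^{-1}$ then $\rho_n(M)$ has eigenvalues $\lambda^n, \lambda^{n-2},\dots,\lambda^{-n}$. Setting $s_n(\lambda) = \sum_{k=0}^n \lambda^{n-2k}$ and $x = \lambda+\lambda^{-1} = \Tr M$, a direct telescoping computation
$$
x\, s_{n-1}(\lambda) = (\lambda+\lambda^{-1})\sum_{k=0}^{n-1}\lambda^{n-1-2k} = s_n(\lambda) + s_{n-2}(\lambda)
$$
gives the recurrence $s_n = x\, s_{n-1} - s_{n-2}$ with initial values $s_0=1$ and $s_1 = x$, matching the defining recurrence and initial conditions for $S_n$. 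Hence $\Tr\rho_n(M) = s_n(\lambda) = S_n(\Tr M)$ on diagonalizable $M$, and the identity extends by continuity to all of $\SL(\C)$.

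Neither step presents any real obstacle; if there is one, it is only bookkeeping in part (2), namely identifying the unique $(n+1)$-dimensional irreducible representation of $\SL(\C)$ with the symmetric power $\mathrm{Sym}^n$ of the standard representation so that the eigenvalue computation is justified. This is however a standard fact from the representation theory of $\SL(\C)$ (equivalently $\mathfrak{sl}_2(\C)$) and requires no further argument here.
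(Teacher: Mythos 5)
Your proof is correct and complete: the Cayley--Hamilton recursion $\Tr M^n = (\Tr M)\,\Tr M^{n-1} - \Tr M^{n-2}$ with the right initial values handles (1), and the $\mathrm{Sym}^n$ eigenvalue computation plus the telescoping recurrence and a density argument handles (2). The paper itself offers no proof of this lemma --- it is stated as a classical property and marked with a \qed{} --- so there is no alternative argument to compare with; your write-up is exactly the standard justification one would supply, and it is consistent with the paper's subsequent remark specializing to $M=\begin{pmatrix}\E^{\I\theta}&0\\0&\E^{-\I\theta}\end{pmatrix}$ to recover the trigonometric identities.
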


Applying Lemma~\ref{lem:Chebyshev} to the matrix $M= \begin{pmatrix}
\E^{\I\theta}&0\\0& \E^{-\I\theta}
\end{pmatrix}$ yields the more classical relations that $\cos n\theta = \frac12 T_n(2\cos\theta)$ and $\sin n\theta = \sin \theta\, S_{n-1}(2\cos \theta)$. 

For future reference, we note the following two elementary properties.

\begin{lem} \label{lem:FirstSecondChebyshevs}
For $n\geq 2$, 
 $$T_n(x) = S_n(x) - S_{n-2} (x) .$$
\end{lem}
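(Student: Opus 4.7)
The plan is to proceed by induction on $n$, using the fact that both $T_n$ and $S_n$ satisfy the \emph{same} three-term recurrence $P_n(x) = xP_{n-1}(x) - P_{n-2}(x)$. The only thing that distinguishes them is the initial data, so the identity $T_n = S_n - S_{n-2}$ should essentially reduce to a check that the combination $S_n - S_{n-2}$ has the right initial values.

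First I would verify the two base cases. For $n=2$ one has $S_2(x) - S_0(x) = (x^2 - 1) - 1 = x^2 - 2 = T_2(x)$, and for $n=3$ one has $S_3(x) - S_1(x) = (x^3 - 2x) - x = x^3 - 3x = T_3(x)$. These are direct from the explicit formulas listed right before the lemma.

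For the inductive step, assume $n \geq 4$ and that the identity holds for $n-1$ and $n-2$. Using the recurrence for $S_n$ and $S_{n-2}$ separately,
\[
S_n(x) - S_{n-2}(x) = \bigl(xS_{n-1}(x) - S_{n-2}(x)\bigr) - \bigl(xS_{n-3}(x) - S_{n-4}(x)\bigr),
\]
which regroups as
\[
x\bigl(S_{n-1}(x) - S_{n-3}(x)\bigr) - \bigl(S_{n-2}(x) - S_{n-4}(x)\bigr) = xT_{n-1}(x) - T_{n-2}(x) = T_n(x),
\]
by the inductive hypothesis and the recurrence for $T_n$. There is one minor bookkeeping point: the argument uses $S_{n-4}$ when $n=4$, so I would either extend the $S_n$ sequence to negative indices by setting $S_{-1}(x)=0$ (consistent with $\sin 0 = 0$ in the trigonometric interpretation), or alternatively make the induction start at $n=4$ after separately checking $n=2,3$ as above.

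There is no real obstacle here; the only mild wrinkle is making sure the base cases and the index $n-4$ are handled cleanly. As a sanity check, one could also give a one-line trigonometric verification: substituting $x = 2\cos\theta$ and using the identities recalled after Lemma~\ref{lem:Chebyshev}, the right-hand side equals $\bigl(\sin(n+1)\theta - \sin(n-1)\theta\bigr)/\sin\theta = 2\cos n\theta$, which equals $T_n(2\cos\theta)$. Since the resulting polynomial identity holds on the infinite set $\{2\cos\theta : \theta\in\R\} = [-2,2]$, it holds identically.
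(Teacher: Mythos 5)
Your proof is correct and is essentially the paper's argument: the paper observes that $T_n - S_n + S_{n-2}$ satisfies the recurrence $P_n = xP_{n-1} - P_{n-2}$ and vanishes for $n=2,3$, which is exactly the induction you spell out (and your worry about $S_{n-4}$ is moot, since for $n=4$ it is just $S_0$). The trigonometric verification is a fine independent check but adds nothing beyond the paper's approach.
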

\begin{proof}
The difference $T_n(x) -S_n(x) + S_{n-2} (x)$  satisfies the linear recurrence relation $P_n(x) = xP_{n-1}(x) - P_{n-2}(x)$, and is equal to $0$ for $n=2$ and $n=3$. 
\end{proof}

\begin{lem}
\label{lem:ChebElementaryProp}
$ $
\begin{enumerate}
\item If $x=a+a^{-1}$, then $T_n(x) = a^n + a^{-n}$;
\item If $y=b+b^{-1}$, the set of solutions to the equation $T_n(x)=y$ consists of the numbers $x=a+a^{-1}$ as $a$ ranges over all $n$--roots of $b$. 
\end{enumerate}
\end{lem}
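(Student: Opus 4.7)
The plan is to prove part (1) by a direct induction on $n$ using the defining recurrence, and then to derive part (2) from part (1) by solving two quadratic equations (first for $a$ in terms of $x$, then for $a^n$ in terms of $b$).

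For part (1), the base cases $n=0$ and $n=1$ are immediate from $T_0(x)=2$ and $T_1(x)=x$. For the inductive step, assuming the formula for $T_{n-1}$ and $T_{n-2}$, the recurrence $T_n(x) = xT_{n-1}(x) - T_{n-2}(x)$ gives
$$
T_n(a+a^{-1}) = (a+a^{-1})(a^{n-1}+a^{-(n-1)}) - (a^{n-2}+a^{-(n-2)}),
$$
and the cross-terms cancel to leave $a^n+a^{-n}$. This step is pure routine algebra.

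For part (2), the inclusion ``$\supseteq$'' is immediate from (1): if $a^n=b$ then $T_n(a+a^{-1}) = a^n+a^{-n} = b+b^{-1} = y$. For the reverse inclusion, I would take any solution $x$, use algebraic closedness of $\C$ to write $x=a+a^{-1}$ for some $a\in \C^*$ (the quadratic $a^2-xa+1=0$ has roots $a$ and $a^{-1}$, so this $a$ exists and is determined up to inversion). Then (1) gives $a^n+a^{-n} = y = b+b^{-1}$, so $a^n$ satisfies $z^2-(b+b^{-1})z+1 = (z-b)(z-b^{-1})=0$, forcing $a^n=b$ or $a^n = b^{-1}$. In the second case, replacing $a$ by $a^{-1}$ (which leaves $x$ unchanged) yields an $n$-th root of $b$ representing $x$.

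There is no real obstacle here: both parts rest only on the recurrence relation and the algebraic closedness of $\C$. The one thing that deserves a brief remark is completeness of the list in (2), but this is automatic once both inclusions are established, and is moreover consistent with the degree count since $T_n(x)-y$ is a degree-$n$ polynomial and the map $a\mapsto a+a^{-1}$ identifies the $n$ solutions $a$ of $a^n=b$ in pairs $\{a,a^{-1}\}$ (fused when $a=\pm1$, which occurs precisely in the degenerate cases where $T_n(x)-y$ has a double root).
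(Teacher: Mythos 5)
Your proof is correct. It differs from the paper's argument mainly in how part (1) is obtained: the paper deduces it in one line from Lemma~\ref{lem:Chebyshev}, namely from the identity $\Tr\, M^n = T_n(\Tr\, M)$ applied to a matrix $M\in\SL(\C)$ with spectrum $\{a,a^{-1}\}$ (so that $\Tr\,M = a+a^{-1}$ and $\Tr\,M^n = a^n+a^{-n}$), and then simply asserts that part (2) ``follows.'' You instead prove (1) by a direct induction on the recurrence $T_n(x)=xT_{n-1}(x)-T_{n-2}(x)$, which makes the lemma self-contained and independent of the $\SL(\C)$ material, and you spell out the step the paper leaves implicit in (2): writing any solution as $x=a+a^{-1}$ via the quadratic $a^2-xa+1=0$ over $\C$, concluding $a^n\in\{b,b^{-1}\}$ from $(z-b)(z-b^{-1})=0$, and replacing $a$ by $a^{-1}$ in the second case. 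Both routes are sound; the paper's is shorter given the earlier lemma, while yours is more elementary and makes the ``both inclusions'' structure of (2) explicit, with the degree count serving as a useful sanity check rather than a needed ingredient.
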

\begin{proof}
For a matrix $M\in \SL(\C)$, the data of its trace $x$ is equivalent to the data of its spectrum $\{a, a^{-1}\}$. The first property is then an immediate consequence of the fact that $\Tr\, M^n = T_n (\Tr\,M)$. The second property then follows. 
\end{proof}

\subsection{Threading a Chebyshev polynomial along a framed link}
\label{sect:ThreadCheb}

Chebyshev polynomials of the first kind surprisingly provide central elements of the skein algebra $\SSS$.

We first introduce some notation. Consider a polynomial
$$
P(x) = \sum_{i=0}^n a_i x^i. 
$$
We can then associate to each  framed knot $K$ in $S\times [0,1]$ the linear combination
$$
[K^P] = \sum_{i=0}^n a_i\, [K^{(i)}] \in \SSS
$$
where, for each $i$, $K^{(i)}$ is the framed link obtained by taking $i$ parallel copies of $K$ in the direction indicated by the framing.

More generally, if $K$ is a framed link with components $K_1$, $K_2$, \dots, $K_l$, 
$$
[K^P] = \sum_{0\leq i_1, \,i_2,\dots, \,i_l \leq n} a_{i_1} a_{i_2} \dots a_{i_l}\, [K_1^{(i_1)} \cup K_2^{(i_2)} \cup \dots \cup K_l^{(i_l)} ].
$$

We will say that the element $[K^P]\in \SSS$ is obtained by \emph{threading the polynomial $P$ along the framed link $K$}. 

When $K\subset S \times [0,1]$ projects to a simple closed curve in $S$ and is endowed with the vertical framing, $[K^P]$ is equal to the evaluation $P\bigl([K]\bigr)\in \SSS$ of the polynomial $P$ at the skein $[K]$, for the algebra structure of $\SSS$. More generally, if $K$, with components $K_1$, $K_2$, \dots, $K_l$, projects to an embedded submanifold of $S$ and is endowed with the  vertical framing, then $[K^P] = P\bigl([K_1]\bigr) P\bigl([K_2]\bigr) \dots P\bigl([K_l]\bigr)$. Beware that this is in general false for a knot or link whose projection to $S$ admits  crossings, or whose framing is different from the vertical framing.

\begin{thm}
\label{thm:ChebyshevCentral}
If $A^2$ is a primitive $N$--root of unity,  threading the Chebyshev polynomial $T_N$ along a framed link  produces a central element of the skein algebra, namely  $[K^{T_N}]$ is central in $\SSS$ for every framed link $K \subset S \times [0,1]$.  
\end{thm}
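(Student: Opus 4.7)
The plan is to show centrality by reducing, via linearity and the skein relation, to proving that $[K^{T_N}]$ commutes with $[\gamma]$ for every simple closed curve $\gamma$ in $S$ endowed with the vertical framing. This reduction works because every class $[L]\in\SSS$ can be written, by iterating the skein relation on the crossings of the projection of $L$, as a linear combination of products of skeins of this form. Moreover, one has $[K^{T_N}] = [K_1^{T_N}]\cdots[K_l^{T_N}]$ whenever $K$ has components $K_1,\dots,K_l$ projecting to disjoint embedded simple closed curves of $S$, so the general case further reduces to that of a single framed knot $K$ whose projection is an embedded simple closed curve.

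Fix such a $K$ and such a $\gamma$ in transverse position. I would prove $[K^{T_N}][\gamma] = [\gamma][K^{T_N}]$ by induction on the number $n$ of intersection points of their projections in $S$. When $n=0$, the link $K$ and the curve $\gamma$ can be isotoped to lie in disjoint subsets of $S\times[0,1]$, so every parallel copy $[K^{(i)}]$ commutes with $[\gamma]$ and so does the linear combination $[K^{T_N}]$. For the inductive step, localize one crossing inside a small disk $B\subset S$; outside $B$ the two orderings agree up to isotopy, and the problem reduces to a local lemma asserting that inside $B\times[0,1]$, superposing the $T_N$-threading of an arc $\alpha$ with a transverse arc $\beta$ produces the same skein regardless of whether $\beta$ lies above or below $\alpha$, modulo correction terms that the skein relation generates with strictly fewer crossings and which are absorbed by the induction hypothesis.

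To prove this local lemma I would expand each of the $N$ crossings between $\beta$ and the $N$ parallel strands of $\alpha$ via the Kauffman relation $[K_1]=A^{-1}[K_0]+A[K_\infty]$; the difference between the two orderings then becomes a polynomial in $A$ whose vanishing is forced by $A^{2N}=1$ combined with the identity $T_N(a+a^{-1}) = a^N + a^{-N}$ from Lemma~\ref{lem:ChebElementaryProp} and the recurrence $T_N(x) = xT_{N-1}(x) - T_{N-2}(x)$. The main obstacle will be this combinatorial verification: organizing the resulting $2^N$ resolutions so that the cancellations become manifest. A conceptually clean encoding is to recognize the $T_N$-threading of the $N$-cable of $\alpha$ as a Frobenius-like element in the Temperley--Lieb algebra $TL_N$ which, at a primitive $N$-th root of $A^2$, becomes transparent to the insertion of an extra strand; alternatively, a nested induction on $N$ and $n$ using the Chebyshev recurrence would reduce the local exchange for $T_N$ to the corresponding exchanges for $T_{N-1}$ and $T_{N-2}$, the topological recursion on the skein side mirroring the algebraic recursion defining $T_N$.
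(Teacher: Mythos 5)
Your top-level skeleton---reduce centrality to a local statement that a transverse strand can be pushed through the $T_N$--threaded cable, implemented by changing crossings one at a time---is essentially the same as the paper's, which changes the crossings of $K$ with $L$ one by one and absorbs each change into an embedded thickened once-punctured torus where the exchange identity of Lemma~\ref{lem:CentralPuncturedTorus} applies. But your proposal stops exactly where the theorem begins: the local exchange lemma \emph{is} the content of the result, and you do not prove it. In the paper this is a genuine computation: $\Tr_\lambda^\omega\bigl([L_0^{T_N}]\bigr)$ is evaluated via Jones--Wenzl idempotents and the quantum trace (Proposition~\ref{prop:QTraceChebyshev1}), at the root of unity only monomials in the $N$--th powers $Z_i^N$ survive, such elements are central in the Chekhov--Fock algebra, and one concludes by injectivity of $\Tr_\lambda^\omega$ (with the extra trick $T_N=T_2\circ T_{N/2}$ when $N$ is even); alternatively one can quote the Frohman--Gelca product-to-sum formula. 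Your suggested routes are hopes rather than arguments, and the most concrete one---reducing the exchange for $T_N$ to exchanges for $T_{N-1}$ and $T_{N-2}$ via the Chebyshev recurrence---cannot work: the exchange property fails for $T_m$ with $m<N$ (threads $[K^{T_m}]$ are in general not central), so there is no inductive statement to pass through; the cancellation is specific to $T_N$ at the given root of unity. Relatedly, your induction ``modulo correction terms with fewer crossings'' is not well-founded: resolving a crossing between $\gamma$ and the cable produces skeins that are no longer of the form (threaded knot)$\,\cup\,\gamma$, so the inductive hypothesis says nothing about them.

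A second, independent error is the reduction of a general framed link $K$ to one whose projection is embedded with vertical framing. The theorem asserts centrality of $[K^{T_N}]$ for \emph{every} framed link $K$, and threading does not descend through the skein relation applied to $K$ itself: resolving a crossing of $K$ does not relate $[K^{T_N}]$ to the threadings of the resolutions (the paper warns of exactly this right after the definition of $[K^P]$). The statement that it does, with $A$ replaced by $\epsilon$, is Theorem~\ref{thm:ChebSkeinRelation}, which is proved later and under the stronger hypothesis that $A^4$ is a primitive $N$--root of unity---unavailable here when $N$ is even, since Theorem~\ref{thm:ChebyshevCentral} only assumes $A^2$ is a primitive $N$--root. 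So even if your local lemma were established, your argument would only cover links projecting to embedded multicurves with vertical framing. The paper's crossing-change argument needs no such reduction: it applies to arbitrary $K$ and $L$ directly.
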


Theorem~\ref{thm:ChebyshevCentral} holds with no parity condition on $N$. 

The key to the proof of Theorem~\ref{thm:ChebyshevCentral} is the following  special case in the once-punctured torus $T$, represented in Figure~\ref{fig:PuncturedTorus} as a square with its corners removed and with opposite sides identified. 

\begin{lem}
\label{lem:CentralPuncturedTorus}
In the once-punctured torus $T$, let $L_0$ and $L_\infty$ be the two curves represented in Figure~{\upshape\ref{fig:PuncturedTorus}}, and consider these curves as framed knots with vertical framing in $T\times[0,1]$. If $A^2$ is a primitive $N$--root of unity, then
$$
[L_0^{T_N}] [L_\infty] = [L_\infty][L_0^{T_N}] 
$$
in the skein algebra $\mathcal S^A(T)$. 
\end{lem}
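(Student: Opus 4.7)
The plan is to compute both products $[L_0^{T_N}][L_\infty]$ and $[L_\infty][L_0^{T_N}]$ explicitly via iterated skein resolutions, recognize them as sums of the same two skeins with the coefficients $A^N$ and $A^{-N}$ interchanged, and then use $A^{2N}=1$ to conclude. This parallels the classical Frohman-Gelca product-to-sum formula on the closed torus.

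To set this up, parametrize essential simple closed curves on $T$ by their slope: for coprime $(p,q)\in\Z^2$, let $C_{p,q}$ denote the $(p,q)$-curve (with convention $C_{-p,-q}=C_{p,q}$), endowed with vertical framing. In particular $[C_{1,0}]=[L_0]$ and $[C_{0,1}]=[L_\infty]$. For $d\geq 1$ and coprime $(p_0,q_0)$, let $[C_{dp_0,dq_0}]$ denote the $T_d$-threading of $C_{p_0,q_0}$; in particular $[L_0^{T_N}]=[C_{N,0}]$. I would then establish the product-to-sum identity
\begin{equation*}
[C_{p,q}]\cdot[C_{r,s}]=A^{ps-qr}\,[C_{p+r,\,q+s}]+A^{-(ps-qr)}\,[C_{p-r,\,q-s}]
\end{equation*}
for non-zero pairs, which is Frohman-Gelca's formula on the closed torus and holds on the once-punctured torus by the same local argument. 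Applying it with $(p,q)=(N,0)$ and $(r,s)=(0,1)$, so that $ps-qr=N$, yields
\begin{equation*}
[L_0^{T_N}][L_\infty]=A^{N}[C_{N,1}]+A^{-N}[C_{N,-1}],\quad [L_\infty][L_0^{T_N}]=A^{-N}[C_{N,1}]+A^{N}[C_{N,-1}].
\end{equation*}
The commutator is therefore $(A^N-A^{-N})\bigl([C_{N,1}]-[C_{N,-1}]\bigr)$, and since $A^2$ is a primitive $N$-th root of unity we have $A^{2N}=1$, so $A^N=\pm1$ and $A^N-A^{-N}=0$; the commutator vanishes.

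The main obstacle is proving the product-to-sum identity in the once-punctured setting: the computation is local, but careful bookkeeping is needed to verify that the two smoothings of the transverse crossing produce exactly the classes $[C_{p+r,q+s}]$ and $[C_{p-r,q-s}]$, and that the Chebyshev $T_d$-threading is precisely the right normalization to eliminate the spurious terms arising when one resolves multiple parallel copies, via the recursion $T_d(x)=xT_{d-1}(x)-T_{d-2}(x)$. Conceptually, this ``miracle'' is captured by Lemma~\ref{lem:ChebElementaryProp}(1): in a quantum-torus model $XY=A^{2}YX$ of the subalgebra generated by $[L_0]$ and $[L_\infty]$, with $[L_0]\mapsto X+X^{-1}$ and $[L_\infty]\mapsto Y+Y^{-1}$, one has $T_N([L_0])\mapsto X^N+X^{-N}$, which visibly commutes with $Y+Y^{-1}$ whenever $A^{2N}=1$.
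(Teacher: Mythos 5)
Your overall strategy is legitimate, and it is in fact the ``elementary proof borrowed from the Product-to-Sum Formula of Frohman--Gelca'' that the paper mentions exists but does not write out; the paper's own proof in \S\ref{sect:ChebPuncTorusSphere} is different (it maps into the Chekhov--Fock algebra by the injective quantum trace $\Tr_\lambda^\omega$, uses Proposition~\ref{prop:QTraceChebyshev1} to see that $\Tr_\lambda^\omega\bigl([L_0^{T_N}]\bigr)$ only involves $N$--th powers $Z_i^N$, hence is central since $\omega^{4N}=1$, with a separate trick $T_N=T_2\circ T_{N/2}$ when $N$ is even). However, as written your argument has a genuine gap at its key step: the product-to-sum identity $[C_{p,q}][C_{r,s}]=A^{ps-qr}[C_{p+r,q+s}]+A^{-(ps-qr)}[C_{p-r,q-s}]$ is \emph{false} on the once-punctured torus in the generality you assert, and ``holds on the once-punctured torus by the same local argument'' is exactly where the punctured case differs from the closed one. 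The new phenomenon is that resolving crossings can produce a loop encircling the puncture: for instance $[C_{1,1}][C_{1,-1}]$ has two mixed resolutions which on the closed torus are trivial loops, but on the punctured torus one of them is puncture-parallel, so the product acquires a correction term involving the peripheral skein $[P]$ (this is why the skein algebra of the once-punctured torus is strictly larger than the symmetrized quantum torus). For the same reason, your closing ``quantum-torus model'' with $[L_0]\mapsto X+X^{-1}$, $[L_\infty]\mapsto Y+Y^{-1}$ is not an algebra homomorphism on the punctured torus, so it cannot stand in for the missing verification.

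The good news is that the \emph{only} instances you use, namely $T_N\bigl([L_0]\bigr)\cdot[L_\infty]$ and $[L_\infty]\cdot T_N\bigl([L_0]\bigr)$, are true on the punctured torus, but this needs an argument you have not supplied. The clean way to close the gap: for the diagram consisting of $i$ parallel copies of $L_0$ together with one copy of $L_\infty$, at each of the $i$ crossings both smoothings join one end of the horizontal arc to the vertical arc below and the other end to the vertical arc above; hence \emph{every} one of the $2^i$ resolutions is a single closed curve whose class is $(m,\pm1)$ with $|m|\leq i$. Such a curve is primitive, essential and non-peripheral, and on the once-punctured torus essential non-peripheral simple closed curves are determined by their slope, so no trivial or puncture-parallel loops ever appear and the expansion of $[L_0]^i[L_\infty]$ (and of $[L_\infty][L_0]^i$) coincides term by term with the closed-torus expansion. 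With that observation, the Frohman--Gelca identity for $(N,0)\ast(0,\pm1)$ transfers verbatim, your two displayed formulas hold, and the commutator $(A^N-A^{-N})\bigl([C_{N,1}]-[C_{N,-1}]\bigr)$ vanishes because $A^{2N}=1$; note this works with no parity assumption on $N$, matching the hypothesis of Lemma~\ref{lem:CentralPuncturedTorus}.
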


\begin{figure}[htbp]

\SetLabels
(.1 * .55) $L_0 $ \\
( .34*  .5) $L_\infty $ \\
(.6 *  .64) $ L_1$ \\
(.87 *  .3) $L_{-1} $ \\
\endSetLabels
\centerline{\AffixLabels{\includegraphics{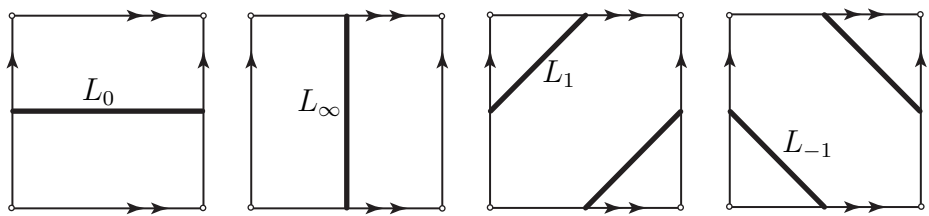}}}

\caption{The curves $L_0$, $L_\infty$, $L_1$ and $L_{-1}$ in the once-punctured torus}
\label{fig:PuncturedTorus}
\end{figure}

There is an elementary proof of Lemma~\ref{lem:CentralPuncturedTorus}, borrowed from the proof of the Product-to-Sum Formula of \cite{FroGel} for the non-punctured torus. See also the (much less elementary) combination of \cite[\S2]{BullPrz} and \cite[Lemma~2]{HavPos}. We provide another proof in \S \ref{sect:ChebPuncTorusSphere}, where it is proved at the same time as Lemma~\ref{lem:ChebSkeinRelationPuncTorus} below.

\begin{proof}[Proof of Theorem~{\upshape\ref{thm:ChebyshevCentral}}, assuming Lemma~{\upshape\ref{lem:CentralPuncturedTorus}}]
We need to show that $[K^{T_N}][L]= [L] [K^{T_N}]$ for any two framed links $K$ and $L$.

 Let $K_1\subset S \times [0,1]$ be isotopic to $K$, and contained in a small neighborhood of $S\times\{0\}$ so that $[K_1^{T_N} \cup L]=[K^{T_N}][L]$ in $\SSS$ (where $[K_1^{T_N} \cup L]$ denotes the element of $\SSS$ obtained by threading the Chebyshev polynomial $T_N$ along the components of $K_1$, and leaving $L$ untouched).  
 
 By progressively changing the undercrossings of $K$ with $L$ to overcrossings, we construct a sequence of framed links $K_1$, $K_2$, \dots, $K_n$  such that each $K_{i+1}$ is obtained from $K_i$ by an isotopy crossing $L$ exactly once, and such that $K_n$ can be isotoped without crossing $L$ into a small neighborhood of $S\times\{1\}$. In particular,  $[K_n^{T_N} \cup L]=[L][K^{T_N}]$ in $\SSS$. It therefore suffices to show that $[K_i^{T_N} \cup L] = [K_{i+1}^{T_N} \cup L]$ for every $i$. 

We now consider an embedded thickened once-punctured torus which ``swallows''  the crossing between $K_i$ and $K_{i+1}$ in question and ``follows'' the two curves otherwise.  We then apply Lemma~\ref{lem:CentralPuncturedTorus} to exchange the swallowed undercrossing to an overcrossing. More precisely, by construction of $K_{i+1}$ from $K_i$ there is an embedding $\phi\colon T\times [0,1] \to S\times[0,1]$ for which, with the notation of Lemma~\ref{lem:CentralPuncturedTorus},  the intersections $K_i\cap\phi \bigl( T \times [0,1] \bigr)$, $L\cap\phi \bigl( T \times [0,1] \bigr)$  and $K_{i+1}\cap\phi \bigl( T \times [0,1] \bigr)$ respectively correspond to the framed knots $L_\infty \times \{ \frac14\}$, $L_0\times \{\frac12\}$ and $L_\infty \times \{\frac34\}$ in $T\times [0,1]$,  after isotopy in $T\times [0,1]$. Applying Lemma~\ref{lem:CentralPuncturedTorus}, and composing with the homomorphism $\mathcal S^A(T) \to \SSS$ induced by $\phi$, shows that $[K_i^{T_N} \cup L] = [K_{i+1}^{T_N} \cup L]$.

By iteration, this concludes the proof. 
\end{proof}

\section{The Chebyshev Homomorphism}
\label{sect:ChebHom}

\begin{thm}
\label{thm:ChebSkeinRelation}
If $A^4$ is a primitive $N$--root of unity and if $\epsilon = A^{N^2} $, there is a unique algebra homomorphism $\mathbf T^A \colon \SSSS \to \SSS$ which, for every framed link $K$ in $S\times [0,1]$,  associates $[K^{T_N}]\in \SSS$ to the skein $[K]\in \SSSS$. 
\end{thm}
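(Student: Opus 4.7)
The plan is to define a linear map on the framed link algebra $\mathcal K(S)$ by $[K]\mapsto [K^{T_N}]\in \SSS$ and then verify that it descends to an algebra homomorphism $\mathbf T^A\colon \SSSS \to \SSS$. Since threading only uses parallel copies along the framing, the assignment depends only on the isotopy class of $K$, so it is well defined on $\mathcal K(S)$.

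Multiplicativity is essentially tautological. If $K$ and $L$ are framed links stacked to form their superposition $KL$ in $\mathcal K(S)$, then by the very definition of threading, $(KL)^{T_N}$ is the disjoint union of $K^{T_N}$ (rescaled in the lower half) and $L^{T_N}$ (rescaled in the upper half), so $[(KL)^{T_N}] = [K^{T_N}]\,[L^{T_N}]$ in $\SSS$. (Theorem~\ref{thm:ChebyshevCentral} is not strictly needed here, but it guarantees that these images commute, so no ordering subtleties can arise when we later compare expressions.)

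The main content is to show that the map respects the $\epsilon$--skein relation: for every Kauffman triple $(K_1,K_0,K_\infty)$ in $S\times[0,1]$, one must prove
$$
[K_1^{T_N}] \;=\; \epsilon^{-1}[K_0^{T_N}] + \epsilon\,[K_\infty^{T_N}] \qquad \text{in } \SSS.
$$
Because the three framed links differ only inside a small ball, this is a purely local statement. The strategy mirrors the proof of Theorem~\ref{thm:ChebyshevCentral}: choose an embedding $\phi\colon T\times[0,1]\to S\times[0,1]$ of a thickened once-punctured torus that ``swallows'' the ball where $K_1,K_0,K_\infty$ differ and ``follows'' each strand elsewhere, arranged so that the three framed links restricted to $\phi(T\times[0,1])$ correspond, up to isotopy, to vertically-framed copies of the standard curves $L_1$, $L_0$, $L_\infty$ in $T$. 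Then I would invoke Lemma~\ref{lem:ChebSkeinRelationPuncTorus} (the companion to Lemma~\ref{lem:CentralPuncturedTorus}, which presumably asserts exactly the Chebyshev-threaded skein relation with coefficient $\epsilon = A^{N^2}$ inside $\mathcal S^A(T)$), and transport this local identity to $\SSS$ via the functorial homomorphism $\mathcal S^A(T)\to \SSS$ induced by $\phi$. Combined with the multiplicativity observed above, this gives the desired identity in $\SSS$.

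The hard step is the local punctured-torus identity itself. Threading $T_N$ along each strand produces $N$ parallel copies near the crossing, and resolving the $N^2$ resulting crossings via the $A$-skein relation yields a sum with a rapidly growing number of terms, together with framing-change factors. The miracle one has to verify is that, precisely when $A^4$ is a primitive $N$--root of unity, the enormous sum collapses to $\epsilon^{-1}[K_0^{T_N}]+\epsilon[K_\infty^{T_N}]$, with the global twist $\epsilon=A^{N^2}$ absorbing the writhe contribution of the $N$--fold cabling. Once this punctured-torus computation is granted, the rest of the proof is a routine swallowing argument, and uniqueness of $\mathbf T^A$ follows from the fact that classes of framed links span $\SSSS$.
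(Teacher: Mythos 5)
Your overall architecture (define the map on framed links, check multiplicativity, then verify the $\epsilon$--skein relation by swallowing the crossing in a standard subsurface and invoking a local lemma) is the paper's architecture, and the multiplicativity and uniqueness remarks are fine. But there is a genuine gap in the key step: you try to handle \emph{every} Kauffman triple with a single local model, a thickened once-punctured torus carrying the curves $L_0, L_\infty, L_1, L_{-1}$. This cannot work, because threading $T_N$ is performed component by component, so the identity to be verified is not determined by the local ball alone: it depends on whether the two strands at the crossing belong to the same component or to different components of $K_1$. Exactly one of $\widehat K_1$, $\widehat K_0$, $\widehat K_\infty$ (the union of the strands appearing in the picture) is disconnected, and the topology of the subsurface that swallows the crossing and follows the strands changes accordingly. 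When $\widehat K_1$ is disconnected, the two strands are two embedded curves meeting once, their neighborhood is a once-punctured torus, and Lemma~\ref{lem:ChebSkeinRelationPuncTorus} (which asserts $[L_0^{T_N}][L_\infty^{T_N}] = A^{-N^2}[L_1^{T_N}] + A^{N^2}[L_{-1}^{T_N}]$, i.e.\ the crossing diagram is the \emph{superposition} of the two components $L_0$ and $L_\infty$) applies — note in passing that your dictionary sending $(K_1,K_0,K_\infty)$ to $(L_1,L_0,L_\infty)$ does not match what that lemma actually says. When instead $\widehat K_1$ is connected (so $\widehat K_0$ or $\widehat K_\infty$ is the disconnected one), the projection of $\widehat K_1$ is a single curve with one double point whose neighborhood is a pair of pants, i.e.\ the twice-punctured plane $U$, not a punctured torus; here the punctured-torus lemma is simply not available, since $[\,\widehat K_1^{T_N}]$ is the thread along one component and cannot be rewritten as a product of two threaded components.

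The paper handles this by a three-case analysis: the punctured-torus lemma for disconnected $\widehat K_1$, and the two identities of Lemma~\ref{lem:ChebSkeinRelationPuncSphere} in the twice-punctured plane, namely $[L_1^{T_N}] = A^{-N^2}[L_0^{T_N}]+A^{N^2}[L_\infty^{T_N}]$ when $\widehat K_\infty$ is disconnected and $[L_{-1}^{T_N}] = A^{N^2}[L_0^{T_N}]+A^{-N^2}[L_\infty^{T_N}]$ when $\widehat K_0$ is disconnected. Your proposal omits the twice-punctured-plane lemma and the case analysis entirely, so two of the three configurations of Kauffman triples are not covered. (Leaving the local computations themselves unproved is consistent with how the paper structures this theorem — they are established later via quantum traces — but you need to know that \emph{two} distinct local statements are required, not one.)
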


A key step in the proof of Theorem~\ref{thm:ChebSkeinRelation} is the following computation. 

\begin{lem}
\label{lem:ChebSkeinRelationPuncTorus}
Suppose that $A^4$ is a primitive $N$--root of unity. 
In the once-punctured torus $T$, let $L_0$, $L_\infty$, $L_1$ and $L_{-1}$ be the curves represented in Figure~{\upshape\ref{fig:PuncturedTorus}}. Considering these curves as knots in $T\times [0,1]$ and endowing them with the vertical framing,
$$
[L_0^{T_N}] [L_\infty^{T_N}] = A^{-N^2} [L_1^{T_N}]+A^{N^2}  [L_{-1}^{T_N}]
$$
in the skein algebra $\mathcal S^A(T)$. 
\end{lem}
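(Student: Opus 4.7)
Since $L_0, L_\infty, L_1, L_{-1}$ all project to simple closed curves carrying the vertical framing, the threading construction coincides with polynomial evaluation in the skein algebra: $[L_i^{T_N}] = T_N\bigl([L_i]\bigr)$ for each $i$. Writing $x = [L_0]$, $y = [L_\infty]$, $z_\pm = [L_{\pm 1}]$, the claim becomes the identity
$$T_N(x)\,T_N(y) \;=\; A^{-N^2}\,T_N(z_+) \;+\; A^{N^2}\,T_N(z_-)$$
inside $\mathcal S^A(T)$. The $N=1$ case is the Kauffman skein relation $xy = A^{-1} z_+ + A z_-$ applied at the single intersection of $L_0$ and $L_\infty$, so the lemma asserts a ``Chebyshev/Frobenius lift'' of this basic skein relation with $A$ replaced by $A^{N^2}$.

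My plan is to realize the subalgebra of $\mathcal S^A(T)$ generated by $x, y, z_\pm$ inside a two-variable quantum torus $\C\langle a^{\pm 1}, b^{\pm 1}\rangle/(ab = qba)$, for an appropriate power $q$ of $A$, via $x \mapsto a + a^{-1}$, $y \mapsto b + b^{-1}$, and each of $z_+, z_-$ mapping to a sum of two $q$-commuting monomials in $a, b$ with explicit scalar coefficients determined by the Kauffman resolution at the $L_0 \cap L_\infty$ crossing. Once this embedding is in hand, Lemma~\ref{lem:ChebElementaryProp}(1) collapses every Chebyshev evaluation to just two monomials, since each of $x, y, z_\pm$ has the shape $u + u^{-1}$ in the quantum torus -- for instance $T_N(x) = a^N + a^{-N}$. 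Expanding $T_N(x)\,T_N(y) = a^N b^N + a^N b^{-N} + a^{-N} b^N + a^{-N} b^{-N}$ and pairing the ``same-sign'' monomials $\{a^N b^N, a^{-N} b^{-N}\}$ against $T_N(z_+)$ and the ``opposite-sign'' monomials $\{a^N b^{-N}, a^{-N} b^N\}$ against $T_N(z_-)$, the $q$-commutation phases that arise from reordering factors collect, after accounting for the scalar coefficients of $z_\pm$, into the $A^{\mp N^2}$ factors appearing in the identity.

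The main obstacle is producing and justifying the quantum torus embedding with the precise scalar coefficients for $z_\pm$. This is most efficiently handled via the quantum trace homomorphism $\Tr_\lambda^\omega : \mathcal S^A(T) \to \mathcal T^\omega(\lambda)$ of \cite{BonWon1}, applied to the two-edge ideal triangulation $\lambda$ of the once-punctured torus. Direct state-sum computations of $\Tr_\lambda^\omega$ on each of $x, y, z_+, z_-$ place their images inside an explicit two-variable quantum subtorus of $\mathcal T^\omega(\lambda)$ of exactly the form described above, and injectivity of $\Tr_\lambda^\omega$ (established in \cite{BonWon1}) then transfers the quantum torus identity back to $\mathcal S^A(T)$. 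Distinguishing $L_1$ from $L_{-1}$ in the final formula, and matching the sign $A^{\pm N^2}$, reduces to a careful choice of orientation of $\lambda$ and tracking which Kauffman smoothing produces which of the two curves $L_{\pm 1}$. No primitive-root hypothesis is actually required for this computation; the identity is true at the level of Laurent polynomials in $a, b$, which is why the lemma can then be leveraged (at roots of unity) in the proof of Theorem~\ref{thm:ChebSkeinRelation}.
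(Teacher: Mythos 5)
There is a genuine gap, and it sits exactly at the step your whole plan rests on: the claimed two-variable quantum torus realization with $x\mapsto a+a^{-1}$, $y\mapsto b+b^{-1}$. For the \emph{once-punctured} torus this is not what the quantum trace gives. An ideal triangulation of $T$ has three edges (not two), and the state-sum computation of $\Tr_\lambda^\omega$ yields \emph{three} monomials for each of the core curves, e.g.\ $\Tr_\lambda^\iota\bigl([L_0]\bigr)=[Z_\infty^{-1}Z_1^{-1}]+[Z_\infty^{-1}Z_1]+[Z_\infty Z_1]$; this is not of the form $u+u^{-1}$ for a monomial $u$, so Lemma~\ref{lem:ChebElementaryProp}(1) does not collapse $T_N\bigl(\Tr_\lambda^\omega([L_0])\bigr)$ to two (or even boundedly many) terms for generic $A$. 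The collapse of $T_N$ of a three-term quantum-trace polynomial down to three monomials in $N$-th powers is precisely the ``miraculous cancellation'' content that the paper has to establish by a lengthy Jones--Wenzl/limit argument (Propositions~\ref{prop:QTraceChebyshev2}--\ref{prop:QTraceChebyshev1}), and it genuinely uses the hypothesis that $A^4$ is a primitive $N$--root of unity. A two-term realization of the kind you describe does exist, but only for the \emph{closed} torus (Frohman--Gelca); the natural map from $\mathcal S^A(T)$ realizing it factors through filling in the puncture, $\mathcal S^A(T)\to\mathcal S^A(T^2)$, which sends the puncture loop to a scalar and is therefore not injective, so an identity verified in that quantum torus cannot be pulled back to $\mathcal S^A(T)$.

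Your closing claim is also a reliable symptom of the problem: you assert that no root-of-unity hypothesis is needed and that the identity holds at the level of Laurent polynomials. The paper points out explicitly (right after the statement of Lemma~\ref{lem:ChebSkeinRelationPuncTorus}) that, unlike the Frohman--Gelca product-to-sum formula for the unpunctured torus, this identity holds in $\mathcal S^A(T)$ \emph{only} when $A^4$ is an $N$--root of unity; for generic $A$ it is false in the punctured-torus skein algebra. So any argument that proves it for all $A$ must be passing (as yours implicitly does) through the closed torus, i.e.\ through a non-injective quotient. To repair the proof you would need to work honestly in the three-generator Chekhov--Fock algebra of $\lambda$, compute $\Tr_\lambda^\omega\bigl([L_i^{T_N}]\bigr)$ there (which is where Jones--Wenzl idempotents and the degeneration to the root of unity enter), and only then compare with the skein relation via injectivity of $\Tr_\lambda^\omega$ — which is the route the paper takes.
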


We postpone the proof of  Lemma~\ref{lem:ChebSkeinRelationPuncTorus} to \S \ref{sect:ChebPuncTorusSphere}. This result is very reminiscent of the Product-to-Sum Formula of \cite{FroGel}, which holds in the unpunctured torus but for all values of $A$. Unlike that formula, Lemma~\ref{lem:ChebSkeinRelationPuncTorus} only holds when $A^4$ is an $N$--root of unity. See the recent preprint \cite{Le} for a simpler proof of  Lemma~\ref{lem:ChebSkeinRelationPuncTorus}. 

\begin{figure}[htbp]
\SetLabels
( .1*-.3 ) $L_1 $ \\
(.37 * -.35) $L_0 $ \\
(.65 *-.3 ) $ L_\infty$ \\
( .92*-.3 ) $L_{-1} $ \\
\endSetLabels
\centerline{\AffixLabels{ \includegraphics{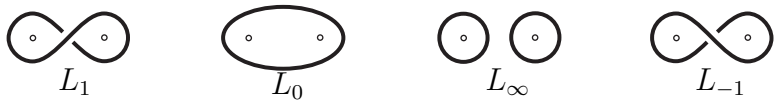} }}
\vskip 10pt
\caption{The $1$--submanifolds $L_1$, $L_0$, $L_\infty$ and $L_{-1}$  in the twice-punctured plane}
\label{fig:PuncturedSphere}
\end{figure}

The proof of Theorem~\ref{thm:ChebSkeinRelation} also uses the following similar computation. 

\begin{lem}
\label{lem:ChebSkeinRelationPuncSphere}
Suppose that $A^4$ is a primitive $N$--root of unity. 
For the twice-punctured plane $U$, let $L_1$, $L_0$, $L_\infty$  and  $L_{-1}$ be the links in $U\times[0,1]$ represented in Figure~{\upshape\ref{fig:PuncturedSphere}}, endowed  with the vertical framing. Then,
\begin{align*}
[L_1^{T_N}]&= A^{-N^2} [L_0^{T_N}]+A^{N^2}  [L_\infty^{T_N}]\\
\text{and } [L_{-1}^{T_N}]&= A^{N^2} [L_0^{T_N}]+A^{-N^2}  [L_\infty^{T_N}]
\end{align*}
in the skein algebra $\mathcal S^A(U)$. 
\end{lem}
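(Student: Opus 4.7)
The plan is to verify the identity by expanding both sides in the skein algebra $\mathcal{S}^A(U)$ and matching the resulting expressions in the natural commutative subalgebra generated by $[P_1]$, $[P_2]$, and $[P_\infty]$ (the simple loops around puncture~1, puncture~2, and both punctures, each with vertical framing).

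For the right-hand side, one observes that $L_0 = P_1 \sqcup P_2$ and $L_\infty = P_\infty$ have projections without crossings, so threading is multiplicative:
$$
[L_0^{T_N}] = T_N\bigl([P_1]\bigr)\, T_N\bigl([P_2]\bigr), \qquad [L_\infty^{T_N}] = T_N\bigl([P_\infty]\bigr).
$$
For the left-hand side, I would expand $[L_1^{T_N}] = \sum_i a_{N,i}\, [L_1^{(i)}]$ using the coefficients $a_{N,i}$ of $T_N$, and then compute each $[L_1^{(i)}]$ directly. Because $L_1$ has one self-crossing in its projection, its $i$-fold vertical cabling $L_1^{(i)}$ has $i^2$ crossings concentrated in a small disk; applying the Kauffman skein relation at each crossing expresses $[L_1^{(i)}]$ as a linear combination of Temperley--Lieb-type tangles in that disk, each of which closes up in $U$ to a disjoint union of curves of type $P_1$, $P_2$, or $P_\infty$, weighted by a monomial in $A$ coming from the crossings.

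Once both sides are written as elements of $\mathbb{C}\bigl[[P_1],[P_2],[P_\infty]\bigr]$, the identity reduces to a family of polynomial equalities in $A$ indexed by the topological type of each resolved closure. The hypothesis that $A^4$ is a primitive $N$-th root of unity produces the \emph{miraculous cancellation}: all contributions other than the two ``extremal'' resolutions (the fully horizontal one, contributing $A^{-N^2}\,T_N([P_1])T_N([P_2])$, and the fully vertical one, contributing $A^{N^2}\,T_N([P_\infty])$) collapse to zero via root-of-unity identities for Chebyshev polynomials, in particular those of Lemma~\ref{lem:ChebElementaryProp}. The companion identity for $[L_{-1}^{T_N}]$ follows from the symmetric computation with $A$ replaced by $A^{-1}$.

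The main obstacle is the brute-force combinatorial book-keeping of the $2^{i^2}$ resolutions and the verification of the vanishing sums at the root of unity; the paper itself admits this as unsatisfying. A more conceptual route would identify the Chebyshev-weighted cabled crossing as a distinguished element of the Temperley--Lieb algebra at a root of unity (perhaps in the spirit of \cite{FroGel}), after which closing up in the twice-punctured plane would yield the identity directly; but I would not expect to avoid the combinatorial core of the computation on a first attempt.
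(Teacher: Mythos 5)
Your overall framing is sound: the skein algebra of the twice-punctured plane is indeed commutative (a polynomial algebra on the three peripheral loops), threading is multiplicative over crossingless embedded projections, so the lemma is equivalent to a polynomial identity in $\C\bigl[[P_1],[P_2],[P_\infty]\bigr]$, and one could in principle try to verify it by resolving the crossings of the cables $L_1^{(i)}$. (Minor caveat: judging from the quantum-trace formulas, in Figure~\ref{fig:PuncturedSphere} it is $L_0$ that is the single curve around both punctures and $L_\infty$ that is the disjoint union of the two peripheral curves, so your identifications are swapped; this only affects labels, not the shape of the argument.) But as a proof your proposal has a genuine gap exactly where the lemma lives: the claim that, when $A^4$ is a primitive $N$--root of unity, every contribution other than the two ``extremal'' resolutions cancels is asserted, not proved. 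Lemma~\ref{lem:ChebElementaryProp} is nowhere near sufficient for this. The expansion you describe mixes terms coming from all the cables $L_1^{(i)}$ weighted by the coefficients of $T_N$, together with factors $-A^2-A^{-2}$ from trivial loops created by the resolutions, and the required vanishing is a cancellation \emph{across} these different cablings, not a termwise root-of-unity identity. Identifying and proving that cancellation is the entire content of the lemma, so what you have is a plan whose central step is the statement to be proved. (A direct skein-theoretic route of this kind can be made to work, but it requires structural lemmas about $T_N$-cabled crossings --- e.g.\ transparency-type statements in the spirit of \cite{FroGel} --- rather than raw bookkeeping of the $2^{i^2}$ resolutions.)

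For comparison, the paper does not resolve crossings in $\mathcal S^A(U)$ at all. It applies the injective quantum trace homomorphism $\Tr_\lambda^\omega\colon \mathcal S^A(U)\to\TT$ for the ideal triangulation of $U$ with edges $e_1,e_2,e_3$, computes $\Tr_\lambda^\omega\bigl([L_i^{T_N}]\bigr)$ explicitly for $i=1,0,\infty,-1$ using the Jones--Wenzl/state-sum machinery of \S\ref{sect:ChebyQTraces} (Propositions~\ref{prop:QTraceBiangleJW}, \ref{prop:QTraceTriangleJW} and \ref{prop:QTraceChebyshev1}, where the root-of-unity cancellations are organized by quantum binomial identities and a limiting argument), and then checks the two identities by inspection of the resulting Laurent polynomials in the $Z_i^N$, concluding by injectivity. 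In other words, the ``miraculous cancellation'' you invoke is precisely what those propositions establish; if you want to argue intrinsically in $\mathcal S^A(U)$ instead, you must supply a substitute for them.
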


Again,  we  postpone the proof of  Lemma~\ref{lem:ChebSkeinRelationPuncSphere} to \S \ref{sect:ChebPuncTorusSphere}.

\begin{proof}
[Proof of Theorem~{\upshape\ref{thm:ChebSkeinRelation}} (assuming Lemmas~{\upshape\ref{lem:ChebSkeinRelationPuncTorus}} and {\upshape\ref{lem:ChebSkeinRelationPuncSphere}})]

We have to check that the Chebyshev threads $[K^{T_N}]\in \SSS$ satisfy the skein relation with $A$ replaced by $\epsilon=A^{N^2}$, and that $[O^{T_N}]=-\epsilon^2- \epsilon^{-2}$ for the trivial framed knot $O$. 

We begin with the easier of these statements, namely the second one. First of all, note that $\epsilon^2 = A^{2N^2}=1$, so we really have to prove that $[O^{T_N}]= -2$. Because $O$ has no crossing and is endowed with the vertical framing,
$$
[O^{T_N}] = T_N\bigl( [O] \bigr) = T_N( -A^2 -A^{-2}) =  -A^{2N}-A^{-2N} = -2
$$
as required, using Lemma~\ref{lem:ChebElementaryProp} and the fact that $N$ is odd for the third equality.

We now turn to the skein relation. Let the framed links $K_1$, $K_0$, $K_\infty \subset S \times[0,1]$ form a Kauffman triple, as in Figure~\ref{fig:SkeinRelation}. Let $\widehat K_1$, $\widehat K_0$, $\widehat K_\infty $ denote the union of the (one or two) components of $K_1$, $K_0$, $K_\infty$, respectively,  that appear in Figure~\ref{fig:SkeinRelation}.  (There may be additional components that do not appear on the picture.) Considering the way the strands represented connect outside of the picture, we distinguish three cases, according to the number of components of $\widehat K_1$, $\widehat K_0$ and $\widehat K_\infty $. One easily sees that exactly two of these three links are connected, while the remaining one has two components. 

If $\widehat K_1$ is disconnected, there exists an embedding $\phi \colon T\times [0,1] \to S \times [0,1]$ of the thickened punctured torus $T\times [0,1]$ such that, using the notation of Lemma~\ref{lem:ChebSkeinRelationPuncTorus},  $\phi^{-1}(K_1) $ is isotopic to the union of $L_0 \times \{ \frac 14\}$ and of $L_\infty \times \{ \frac 34\}$ in $T\times [0,1]$,  while $\phi^{-1}(K_0)$ and $\phi^{-1}(K_\infty) $ are respectively isotopic to $L_1 \times \{\frac12\}$ and $L_{-1} \times \{\frac12\}$.   Lemma~\ref{lem:ChebSkeinRelationPuncTorus} shows that 
$$
[\widehat K_1^{T_N}] = A^{-N^2}[\widehat K_0^{T_N}] + A^{N^2}[\widehat K_\infty^{T_N}] = \epsilon^{-1} [\widehat K_0^{T_N}] + \epsilon [\widehat K_\infty^{T_N}]
$$
in $\mathcal S^A \bigl( \phi \bigl(T\times[0,1]\bigr)\bigr)$. 
Since $K_1$, $K_0$, $K_\infty $ coincide outside of $\phi\bigl(T\times[0,1]\bigr)$, it follows that 
$$
[K_1^{T_N}] = \epsilon^{-1}  [K_0^{T_N}] + \epsilon[K_\infty^{T_N}] .
$$
in $\SSS$. 

The other two cases are similar, but using Lemma~\ref{lem:ChebSkeinRelationPuncSphere} this time. If $\widehat K_\infty$ is disconnected, then there exists an embedding $\phi \colon U\times [0,1] \to S \times [0,1]$ of the thickened twice-punctured plane $U\times [0,1]$ such that, using the notation of Lemma~\ref{lem:ChebSkeinRelationPuncSphere},  $\phi^{-1}(K_1)= L_1$,  $\phi^{-1}(K_0) = L_0$ and $\phi^{-1}(K_\infty) = L_\infty$. Applying the first identity of Lemma~\ref{lem:ChebSkeinRelationPuncSphere} shows that 
$$
[\widehat K_1^{T_N}] =  \epsilon^{-1} [\widehat K_0^{T_N}] + \epsilon [\widehat K_\infty^{T_N}]
$$
in this case as well, so that $[K_1^{T_N}] =  \epsilon^{-1} [K_0^{T_N}] + \epsilon [K_\infty^{T_N}]
$ in $\SSS$. 

Finally, if $\widehat K_0$ is disconnected, we use an embedding $\phi \colon U\times [0,1] \to S \times [0,1]$ of the thickened twice-punctured plane $U\times [0,1]$ such that  $\phi^{-1}(K_1)= L_{-1}$,  $\phi^{-1}(K_0) = L_\infty$ and $\phi^{-1}(K_\infty) = L_0$. The second identity of Lemma~\ref{lem:ChebSkeinRelationPuncSphere} then provides the relation sought.

Therefore, 
$
[K_1^{T_N}] =  \epsilon^{-1} [K_0^{T_N}] + \epsilon [K_\infty^{T_N}]
$ in $\SSS$ 
for every Kauffman triple $K_1$, $K_0$, $K_\infty$. This proves that the threading map $K \mapsto [K^{T_N}]\in \SSS$ induces a linear map $\mathbf T^A \colon \SSSS \to \SSS$. It is immediate that $\mathbf T^A$ is an algebra homomorphism. 
\end{proof}

\section{Invariants of finite-dimensional irreducible representations}
\label{sect:RepInvariants}

Let $\rho\colon \SSS \to \End(V)$ be a finite-dimensional irreducible  representation of the skein algebra $\SSS$. 
 We want to construct invariants for $\rho$. 

To take advantage of Theorems~\ref{thm:ChebyshevCentral} and \ref{thm:ChebSkeinRelation}, we are  going to need   that both $A^4$ is a primitive $N$--root of unity and $A^{2N}=1$. This is equivalent to the property that $A^2$ is a primitive $N$--root of unity with $N$ odd. In particular, $\epsilon=A^{N^2}=A^N=\pm1$.

\subsection{The classical shadow}
\label{sect:Shadow}

Composing the representation  $\rho\colon \SSS \to \End(V)$  with the Chebyshev homomorphism $\mathbf T^A \colon \SSSS \to \SSS$ of Theorem~\ref{thm:ChebSkeinRelation}, we obtain a homomorphism $\SSSS \to \End(V)$. By Theorem~\ref{thm:ChebyshevCentral}, this homomorphism factors through the center of $\SSS$ and therefore, by irreducibility of $\rho$ and by Schur's lemma, there exists an algebra homomorphism
$$
\kappa_\rho \colon \SSSS \to \C 
$$
such that 
$$
\rho \circ \mathbf T^A\bigl ([K]\bigr) = \rho\bigl([K^{T_{N} }] \bigr) = \kappa_\rho \bigl([K] \bigr) \Id_V
$$
for every skein $[K]\in\SSS$.

Because  $\epsilon = \pm1$,  such a homomorphism $\kappa_\rho \colon \SSSS \to \C $ has a geometric interpretation. 

More precisely, when $\epsilon = -1$, consider the character variety
$$
\RR = \{ \text{group homomorphisms } r\colon \pi_1(S) \to \SL(\C) \} \db \SL(\C)
$$
where $\SL(\C)$ acts  on homomorphisms by conjugation, and where the double bar $\db$ indicates that the quotient is taken in the sense of geometric invariant theory \cite{Mum} in algebraic geometry. For a group homomorphism $r\colon \pi_1(S) \to \SL(\C)$ and a closed curve $K\subset S \times [0,1]$, the trace $\Tr\,r(K)\in \C$ depends only on the class of $r$ in the character variety $\RR$. An observation of Doug Bullock, Charlie Frohman, Jozef Przytycki and Adam Sikora \cite{Bull1, Bull2, BFK1, BFK2, PrzS} then shows that this defines an algebra homomorphism
$$
\Tr_r \colon \mathcal S^{-1}(S) \to \C
$$
by the property that
$$
\Tr_r \bigl( [K] \bigr) = - \Tr\, r(K)
$$
for every (connected) framed knot $K\subset S\times [0,1]$. Note that $\Tr_r \bigl([K] \bigr)$ is independent of the framing of $K$. 

There exists a twisted version of this when $\epsilon = +1$, namely for the skein algebra $\mathcal S^{+1}(S)$. Let $\Spin$ denote the space of isotopy classes of spin structures on $S$. Given a spin structure $\sigma\in \Spin$,  John Barrett \cite{Barr} constructs  an algebra homomorphism $B_\sigma\colon \mathcal S^{+1}(S) \to \mathcal S^{-1}(S)$, which to a skein $[K]\in \mathcal S^{+1}(S)$ associates $(-1)^{k+\sigma(k)} [K]\in \mathcal S^{-1}(S)$, where $k$ is the number of components of  the link $K$ and where $\sigma(K)\in \Z_2$ is the monodromy of the framing of $K$ with respect to the spin structure $\sigma$. A pair $r=(\widehat r,\sigma)$, consisting of  a group homomorphism $\widehat r\colon \pi_1(S) \to \SL(\C)$ and a spin structure $\sigma\in \Spin$, then defines a trace map
$$
\Tr_{r} = \Tr_{\widehat r} \circ B_\sigma \colon  \mathcal S^{+1}(S) \to \C,
$$
which to a connected skein $[K]\in  \mathcal S^{+1}(S)$ associates $$\Tr_r \bigl( [K] \bigr) = (-1)^{\sigma(K)} \Tr\, \widehat r(K). $$ 

This trace map $\Tr_r$ is unchanged under certain modifications of the pair $(\widehat r,\sigma)$. Indeed, two spin structures in $\Spin$  differ by an obstruction in $H^1(S;\Z_2)$; this  defines an action of $H^1(S;\Z_2)$ on $\Spin$. The cohomology group $H^1(S;\Z_2)$ also acts on the character variety $\RR$ by the property that, for a homomorphism $\widehat r \colon \pi_1(S) \to \SL(\C)$  and $\alpha \in H^1(S;\Z_2)$, the character $\alpha \widehat r\in \RR$ is represented by
\begin{align*}
\alpha \widehat r(\gamma)\colon \pi_1(S)  &\longrightarrow\quad  \SL(\C)\\
\gamma \quad&\longmapsto   (-1)^{\alpha(\gamma)} \widehat r(\gamma) .
\end{align*}
Hence the trace map $\Tr_{r}  \colon  \mathcal S^{+1}(S) \to \C$ depends only on the image of $r=(\widehat r,\sigma)$ in  the quotient
$$
\RS = \left( \RR \times \Spin\right) / H^1(S;\Z_2).
$$

To explain the appearance of the group $\PSL(\C)$ in the notation, consider the character variety
$$
\mathcal R_{\PSL(\C)}(S)=
 \{ \text{group homomorphisms } r\colon \pi_1(S) \to \PSL(\C) \} \db \PSL(\C)
$$
and the subset $\mathcal R_{\PSL(\C)}^0(S)= \RR/H^1(S;\Z_2)$ of $\mathcal R_{\PSL(\C)}(S)$ consisting of those homomorphisms $\pi_1(S) \to \PSL(\C)$ that lift to $\SL(\C)$. Bill Goldman \cite{Gold} showed that $\mathcal R_{\PSL(\C)}^0(S)$ is equal to the whole character variety $\mathcal R_{\PSL(\C)}(S)$ when $S$ is non-compact, and is equal to one of the two components of $\mathcal R_{\PSL(\C)}(S)$  when $S$ is compact. Then, $\RS$ can be seen as a twisted product of $\mathcal R_{\PSL(\C)}^0(S)$
with $\Spin$. In particular, both $\RR$ and $\RS$ are coverings of $\mathcal R_{\PSL(\C)}^0(S)$ with fiber $\Spin \cong H^1(S;\Z_2)$. Choosing a spin structure $\sigma \in \Spin$ provides an identification $\RS \cong \RR$, but there is no natural such identification.

Conversely, the following statement shows that all algebra homomorphisms $\kappa \colon \SSSS \to \C$ are obtained in this way, via a trace map. 

\begin{prop}
\label{prop:Helling}
 Every 
 homomorphism $\kappa: \mathcal S^\epsilon (S) \to \C$ is equal to the trace homomorphism $\Tr_r$ associated to a  unique character $r \in \RR$ if $\epsilon =-1$, or a twisted character $r\in \RS$ if $\epsilon=+1$. 

\end{prop}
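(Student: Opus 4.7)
The plan is to reduce the statement to the classical theorem of Bullock, Przytycki and Sikora (\cite{Bull1, Bull2, PrzS, BFK1, BFK2}), which identifies the commutative algebra $\mathcal S^{-1}(S)$, modulo its nilradical, with the coordinate ring $\C[\RR]$ of the $\SL(\C)$--character variety, via the map sending a connected framed knot $[K]$ to the function $r\mapsto -\Tr\, r(K)$. Assuming this, the $\epsilon=-1$ case is immediate: any algebra homomorphism $\kappa\colon\mathcal S^{-1}(S)\to\C$ vanishes on nilpotents (since $\C$ is reduced), so it descends to $\overline\kappa\colon\C[\RR]\to\C$. By the Nullstellensatz, $\overline\kappa$ corresponds to a unique closed point $r\in\RR$, and the definition of $\Tr_r$ then forces $\kappa=\Tr_r$ on every connected framed knot, hence (by multiplicativity) on all of $\mathcal S^{-1}(S)$.

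For $\epsilon=+1$, I would transport the problem to the previous case using Barrett's homomorphism. Fix an arbitrary spin structure $\sigma_0\in\Spin$; appealing to \cite{Barr}, $B_{\sigma_0}\colon\mathcal S^{+1}(S)\to\mathcal S^{-1}(S)$ is an algebra isomorphism (its inverse has the same form, since $B_{\sigma_0}$ squares to the identity under the tautological identification of framed link bases). Given a non-trivial homomorphism $\kappa\colon\mathcal S^{+1}(S)\to\C$, the composition $\kappa\circ B_{\sigma_0}^{-1}\colon\mathcal S^{-1}(S)\to\C$ is itself a non-trivial homomorphism and so, by the first step, equals $\Tr_{\widehat r}$ for a unique $\widehat r\in\RR$. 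Unwinding the definition $\Tr_{(\widehat r,\sigma_0)}=\Tr_{\widehat r}\circ B_{\sigma_0}$ then yields $\kappa=\Tr_{(\widehat r,\sigma_0)}$, which establishes existence of a representing pair.

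Uniqueness in $\RS$ requires showing that two pairs $(\widehat r,\sigma)$ and $(\widehat r',\sigma')$ produce the same trace map on $\mathcal S^{+1}(S)$ if and only if they differ by the diagonal action of some $\alpha\in H^1(S;\Z_2)$. After fixing $\sigma$, the $\epsilon=-1$ step already shows that $\widehat r$ is uniquely determined; so only the dependence on $\sigma$ needs analysis. A direct calculation using $B_\sigma([K])=(-1)^{k+\sigma(K)}[K]$ shows that replacing $\sigma$ by $\alpha\sigma$ multiplies $B_\sigma([K])$ by $(-1)^{\alpha(K)}$ on each connected component of $K$, exactly cancelling the effect of simultaneously replacing $\widehat r$ by $\alpha\widehat r$. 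The main obstacle in the whole argument is the sign bookkeeping: one must reconcile the $-\Tr$ convention of Bullock--Przytycki--Sikora with the spin-structure-dependent signs in Barrett's map and with the $H^1(S;\Z_2)$ action on $\RR$. Once these conventions are pinned down, everything else---factorization through the nilradical, the Nullstellensatz, and the equivariance calculation---is essentially routine.
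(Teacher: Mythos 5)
Your overall route is genuinely different from the paper's: for $\epsilon=-1$ the paper simply invokes Helling's characterization of $\SL(\C)$--trace functions, whereas you go through the Bullock--Przytycki--Sikora identification of the skein algebra at $A=-1$ with the coordinate ring of $\RR$ and then apply the Nullstellensatz; the $\epsilon=+1$ transport via Barrett's isomorphism and the $H^1(S;\Z_2)$--equivariance is essentially the same as in the paper. However, your first step has a genuine gap: you never use the non-triviality hypothesis, and as written your argument would show that \emph{every} homomorphism $\kappa\colon\mathcal S^{-1}(S)\to\C$, including the trivial one, is a trace homomorphism --- which is false, since every $\Tr_r$ sends the crossingless vertically framed unknot $[O]$ to $-\Tr(\Id)=-2$, while the trivial homomorphism sends it to $0$. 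The source of the problem is a convention mismatch: in this paper $\mathcal S^{\pm1}(S)$ is defined by the Kauffman relation alone, with no relation identifying $[O]$ with $-(A^2+A^{-2})[\varnothing]$. Since every Kauffman triple consists of non-empty links, $[O]+2[\varnothing]$ is non-zero here, and because $\bigl([O]+2[\varnothing]\bigr)[K]=0$ for every non-empty $K$, the element $e=\tfrac12\bigl([O]+2[\varnothing]\bigr)$ is a non-zero central idempotent, hence not nilpotent. It lies in the kernel of the Bullock--Przytycki--Sikora trace map, so that kernel is strictly larger than the nilradical of \emph{this} $\mathcal S^{-1}(S)$, and ``$\kappa$ vanishes on nilpotents'' does not give the asserted factorization through $\C[\RR]$; indeed the trivial homomorphism is exactly the projection onto the factor $\C e$ and does not factor.

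The repair is short and is precisely where non-triviality must enter, mirroring the computation in Lemma~\ref{lem:ClassicalShadowNonTrivial}: pick a non-empty framed link $K$ with $\kappa([K])\neq0$; applying $\kappa$ to $\bigl([O]+(A^2+A^{-2})[\varnothing]\bigr)[K]=0$ forces $\kappa([O])=-2$, so $\kappa$ kills the ideal generated by $[O]+2[\varnothing]$ and descends to the quotient, which is the skein module in the Bullock--Przytycki--Sikora normalization (where the unknot relation is imposed). From there your Nullstellensatz argument does apply, and uniqueness of $r\in\RR$ holds because the trace functions generate $\C[\RR]$ and hence separate points. With that fixed, your $\epsilon=+1$ argument goes through: $\kappa\circ B_{\sigma_0}^{-1}$ is again non-trivial, existence follows, and for uniqueness in $\RS$ you should make explicit that after writing $\sigma'=\alpha\sigma$ the equivariance identity $\Tr_{(\widehat r',\alpha\sigma)}=\Tr_{(\alpha\widehat r',\sigma)}$ lets you reduce to a common spin structure and then apply the $\epsilon=-1$ uniqueness; this is the computation already sketched in the text preceding the proposition.
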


\begin{proof}
The case $\epsilon =-1$ is a result of Heinz Helling \cite[Prop.~1]{Hell}. 
The case $\epsilon =+1$ easily follows by using the Barrett isomorphism $B_\sigma\colon \mathcal S^{+1}(S) \to \mathcal S^{-1}(S)$ provided by a spin structure $\sigma \in \Spin$.  
\end{proof}

Let us now return to our analysis of a finite-dimensional irreducible representation $\rho \colon \SSS \to \End(V)$, and to its associated homomorphism $\kappa_\rho \colon \SSSS\to \C$ defined by the property that
$ \rho \bigl( [K^{T_{N} }] \bigr) = \kappa_\rho\bigl([K]\bigr) \Id_V
$
for every skein $[K]\in\SSS$.

Combining the definition of the algebra homomorphism  $\kappa_\rho \colon \SSSS \to \C $  with Proposition~\ref{prop:Helling} 
provides the following statement. 

\begin{thm}
\label{thm:Shadow}
Suppose that $A^2$ is a primitive $N$--root of unity with $N$ odd, so that $\epsilon =A^{N^2}$ is equal to $\pm1$.  Then, for every finite-dimensional irreducible representation $\rho \colon \SSS \to \End(V)$ of the skein algebra $\SSS$, there exists a unique  character $r_\rho\in \RR$ if $A^N=-1$, or  a unique twisted character $r_\rho\in \RS$ if $A^N=+1$, such that
$$
\rho\bigl ([K^{T_N}] \bigr) = \bigl( \Tr_{r_\rho}([K]) \bigr) \Id_V
$$ 
for every skein $[K] \in \SSS$.   \qed
\end{thm}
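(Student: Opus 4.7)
The plan is to assemble the ingredients that have already been prepared in \S\ref{sect:CentralElts}, \S\ref{sect:RepInvariants}, and the auxiliary results about the character variety. Essentially no new work is required beyond checking that the constants align and that the hypotheses $A^2$ primitive $N$-th root of unity, $N$ odd, put us in the setting of both Theorems~\ref{thm:ChebyshevCentral} and \ref{thm:ChebSkeinRelation}. Since $N$ is odd we have $A^{N^2}=(A^N)^N=A^N$, so $\epsilon = A^{N^2}= A^N = \pm1$, which is exactly the condition needed to invoke Proposition~\ref{prop:Helling}.

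First I would combine Theorems~\ref{thm:ChebyshevCentral} and \ref{thm:ChebSkeinRelation}: the latter gives a well-defined algebra homomorphism $\mathbf T^A \colon \SSSS \to \SSS$ sending $[K]$ to $[K^{T_N}]$, and the former guarantees that its image lies in the center of $\SSS$. Composing with the irreducible representation $\rho \colon \SSS \to \End(V)$ therefore yields an algebra homomorphism $\rho \circ \mathbf T^A \colon \SSSS \to \End(V)$ landing in the subalgebra of operators commuting with $\rho(\SSS)$. By Schur's lemma this subalgebra consists of scalar operators, so there is a unique algebra homomorphism $\kappa_\rho \colon \SSSS \to \C$ with
$$
\rho\bigl([K^{T_N}]\bigr) = \kappa_\rho\bigl([K]\bigr)\,\Id_V
$$
for every skein $[K]\in\SSS$.

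Next, I would invoke Lemma~\ref{lem:ClassicalShadowNonTrivial} to conclude that $\kappa_\rho$ is non-trivial (in particular $\kappa_\rho([O]) = -2$ for the small trivial loop $O$, using the convention on representations adopted in \S\ref{sect:RepInvariants}). With non-triviality in hand, Proposition~\ref{prop:Helling} applies and produces a unique character $r_\rho \in \RR$ (when $\epsilon = -1$) or unique twisted character $r_\rho \in \RS$ (when $\epsilon = +1$) such that $\kappa_\rho = \Tr_{r_\rho}$. Substituting this identity gives the desired formula $\rho([K^{T_N}]) = (\Tr_{r_\rho}([K]))\,\Id_V$ for every skein $[K]\in\SSS$.

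Uniqueness of $r_\rho$ is inherited directly from the uniqueness statement in Proposition~\ref{prop:Helling}, since $\kappa_\rho$ is determined by $\rho$. The main conceptual obstacle has already been dealt with before this theorem: namely the proofs of Theorems~\ref{thm:ChebyshevCentral} and \ref{thm:ChebSkeinRelation} (which rely on the once-punctured torus and twice-punctured plane computations of Lemmas~\ref{lem:CentralPuncturedTorus}, \ref{lem:ChebSkeinRelationPuncTorus}, and \ref{lem:ChebSkeinRelationPuncSphere}), together with Helling's classification of homomorphisms $\mathcal S^{-1}(S) \to \C$ and its spin-structure twist. Given those, the proof of Theorem~\ref{thm:Shadow} is a short formal assembly via Schur's lemma, and the only item worth double-checking is the compatibility $\epsilon = A^N$ under the odd-$N$ hypothesis, which dictates whether $r_\rho$ lives in $\RR$ or in $\RS$.
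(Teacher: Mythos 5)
Your proposal is correct and follows essentially the same route as the paper: the paper obtains $\kappa_\rho$ by composing $\rho$ with the Chebyshev homomorphism $\mathbf T^A$ of Theorem~\ref{thm:ChebSkeinRelation}, using the centrality from Theorem~\ref{thm:ChebyshevCentral} and Schur's lemma, and then combines Lemma~\ref{lem:ClassicalShadowNonTrivial} with Proposition~\ref{prop:Helling} exactly as you do. Your check that $\epsilon=A^{N^2}=A^N$ and that the hypothesis ``$A^2$ primitive $N$--root of unity with $N$ odd'' implies both needed conditions matches the paper's own remarks at the start of \S\ref{sect:RepInvariants}.
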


In particular, Theorem~\ref{thm:Shadow} implies Theorems~\ref{thm:ShadowIntroA=-1} and \ref{thm:ShadowIntroA=+1} stated in the introduction since,   for a knot $K\subset S\times [0,1]$ with no crossing and with vertical framing, $T_N\bigl(\rho([K])\bigr) = \rho\bigl([K^{T_N}]\bigr)$ and $ \Tr_{r_\rho}([K]) =-\Tr\, r_\rho(K)$. 

We call the character  $r_\rho\in \RR$ or $ \RS$, associated to $\kappa_\rho$ by Theorem~\ref{thm:Shadow},  the \emph{classical shadow} of the irreducible representation $\rho \colon \SSS \to \End(V)$.

\subsection{Puncture invariants}

The central elements associated to the punctures of $S$ provide similar invariants for the finite-dimensional irreducible representation $\rho\colon \pi_1(S) \to \End(V)$. More precisely, let $P_k\subset S\times \{\frac12\}\subset S\times[0,1]$ be a small simple loop going around the $k$--th puncture, endowed with the vertical framing. Proposition~\ref{prop:PunctCentralElts} shows that the corresponding skein $[P_k]\in \SSS$ is central. Therefore, by irreducibility of $\rho$, there exists a number $p_k \in \C$ such that 
$$
\rho\bigl([P_k]\bigr) = p_k\, \Id_V. 
$$
This number $p_k\in \C$ is the \emph{$k$--th puncture invariant} of the finite-dimensional irreducible representation~$\rho$.

These  numbers $p_k$ are clearly constrained by the classical shadow $r_\rho \in \RR$ or $\RS$ of  $\rho$, in terms of the Chebyshev polynomial $T_N$. Indeed:

\begin{lem}
\label{lem:PunctInvShadow}
$$T_N(p_k)  = -\Tr \, r_\rho(P_k).$$
\end{lem}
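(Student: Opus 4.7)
The plan is to chain together four observations: the definition of the puncture invariant $p_k$, the commented-upon ``polynomial evaluation equals threading'' property for links with vertical framing projecting to embedded submanifolds, the characterization of the classical shadow in Theorem~\ref{thm:Shadow}, and the conversion between $\Tr_{r_\rho}([P_k])$ and $-\Tr\, r_\rho(P_k)$.

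First I would observe that $P_k$ projects to an embedded simple loop in $S$ and is equipped with the vertical framing, so by the remark in \S\ref{sect:ThreadCheb} the Chebyshev thread $[P_k^{T_N}] \in \SSS$ coincides with the algebraic evaluation $T_N([P_k])$. Then I would apply the algebra homomorphism $\rho$ to this element in two different ways. On the one hand, since $\rho$ is an algebra homomorphism and $\rho([P_k]) = p_k\,\Id_V$, we obtain
$$
\rho\bigl([P_k^{T_N}]\bigr) = \rho\bigl(T_N([P_k])\bigr) = T_N\bigl(\rho([P_k])\bigr) = T_N(p_k)\,\Id_V.
$$
On the other hand, Theorem~\ref{thm:Shadow} applied to the skein $[P_k]$ gives
$$
\rho\bigl([P_k^{T_N}]\bigr) = \Tr_{r_\rho}\bigl([P_k]\bigr)\,\Id_V.
$$
Comparing the scalars yields $T_N(p_k) = \Tr_{r_\rho}([P_k])$.

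Finally I would invoke the identification $\Tr_{r_\rho}([P_k]) = -\Tr\, r_\rho(P_k)$ already recorded in \S\ref{sect:Shadow}: in the $\epsilon = -1$ case this is the defining property of $\Tr_r$ in the untwisted character variety $\RR$, and in the $\epsilon = +1$ case it follows from the definition of $\Tr_r$ in $\RS$ via the Barrett isomorphism $B_\sigma$, where the sign $(-1)^{\sigma(P_k)}$ coming from the framing monodromy is absorbed into the definition of $\Tr\, r(P_k)$ for a twisted character $r \in \RS$. This gives $T_N(p_k) = -\Tr\, r_\rho(P_k)$, as desired.

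The argument is essentially a bookkeeping consequence of Theorem~\ref{thm:Shadow}, so there is no genuine obstacle; the only point that requires a moment's care is ensuring that the sign conventions for the trace map agree in both cases $\epsilon = \pm 1$, which has already been settled in the paragraph following Theorem~\ref{thm:Shadow}.
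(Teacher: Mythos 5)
Your proposal is correct and follows essentially the same route as the paper's proof: it combines the identity $[P_k^{T_N}] = T_N([P_k])$ (valid because $P_k$ has no crossings and vertical framing), the fact that $\rho$ is an algebra homomorphism with $\rho([P_k]) = p_k\,\Id_V$, Theorem~\ref{thm:Shadow}, and the sign convention $\Tr_{r_\rho}([P_k]) = -\Tr\,r_\rho(P_k)$. The only difference is cosmetic ordering of the steps, so no further comment is needed.
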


\begin{proof}
By  definition of the homomorphism $\Tr_{r_\rho} \colon \SSSS \to \C$ and by Theorem~\ref{thm:Shadow},
\begin{align*}
\bigl(\Tr \, r_\rho(P_k)\bigr)\Id_V &=- \Tr_{r_\rho}\bigl( [P_k] \bigr) \Id_V =- \rho\bigl(\bigl[P_k^{T_{N} }\bigr] \bigr) = -\rho\bigl(T_N([P_k])\bigr)\\
 & =- T_N\bigl(\rho([P_k])\bigr) =- T_N( p_k\Id_V) =- T_N(p_k) \Id_V
\end{align*}
where the third equality uses the fact that $P_k$ has no crossings and has vertical framing, and where the fourth equality holds because $\rho$ is an algebra homomorphism. 
\end{proof}

In particular, Lemma~\ref{lem:ChebElementaryProp} shows that, once the classical shadow $r_\rho$  is given, there are at most $N$ possibilities for each puncture invariant $p_k$.

\section{The quantum Teichm\"uller space and the quantum trace homomorphism}

In \cite{BonWon3, BonWon4}, we prove a converse to Theorem~\ref{thm:Shadow} and Lemma~\ref{lem:PunctInvShadow}, by showing that every character $r\in \RR$ or $\RS$ and any set of puncture weights $p_k$ with  $T_N(p_k) = \Tr_{r_\rho}\bigl( [P_k] \bigr)$ can be realized as the classical shadow and the puncture invariants of a finite-dimensional irreducible representation $\rho\colon \SSS \to \End(V)$. 

The proof of this result relies on the quantum Teichm\"uller space of \cite{Foc, CheFoc1, CheFoc2} and on the quantum trace homomorphism constructed in \cite{BonWon1}. A crucial step in the argument is a compatibility property between the quantum trace homomorphism and the Chebyshev homomorphism, appearing here as Theorem~\ref{thm:ChebyQTracesFrob}. This section is devoted to the precise statement of this result. This property is of interest by itself, because of the surprising ``miraculous cancellations'' that it exhibits. A special case of  Theorem~\ref{thm:ChebyQTracesFrob} also provides a key step in the proof of  Lemmas~\ref{lem:CentralPuncturedTorus}, \ref{lem:ChebSkeinRelationPuncTorus} and \ref{lem:ChebSkeinRelationPuncSphere}, which we had temporarily postponed for this very reason

\subsection{The Chekhov-Fock algebra}
\label{sect:CheFock}

The Chekhov-Fock algebra is the avatar of the quantum Teichm\"uller space associated to an ideal triangulation of the surface $S$. If $S$ is obtained from a compact surface $\bar S$ by removing finitely many points $v_1$, $v_2$, \dots, $v_s$, an
 \emph{ideal triangulation} of $S$ is a triangulation $\lambda$ of $\bar S$ whose vertex set is exactly
$\{ v_1,v_2, 
\dots, v_s \}$. 

Throughout the rest of the article, we will assume that $S$ admits such an ideal triangulation $\lambda$, which is equivalent to the property that $S$ has at least one puncture and that its Euler characteristic is negative. 

Let $e_1$, $e_2$, \dots, $e_n$ denote the edges of $\lambda$. Let $a_{ij} \in \{0,1, 2\}$ be the number of times an end of the edge $e_j$ immediately succeeds an end of $e_i$ when going counterclockwise around a puncture of $S$, and set $\sigma_{ij}=a_{ij}-a_{ji}\in \{-2, -1, 0, 1, 2\}$.  The \emph{Chekhov-Fock algebra} $\TT$ of $\lambda$ is the algebra defined by generators $Z_1^{\pm1}$, $Z_2^{\pm1}$, \dots, $Z_n^{\pm1}$ associated to the edges  $e_1$, $e_2$, \dots, $e_n$ of $\lambda$, and by the relations
$$
Z_iZ_j = \omega^{2\sigma_{ij}} Z_jZ_i.
$$ 

(The actual Chekhov-Fock algebra $\mathcal T^q(\lambda)$ that is at the basis of the quantum Teichm\"uller space uses the constant $q=\omega^4$ instead of $\omega$. The generators $Z_i$ of $\TT$ appearing here are designed to model square roots of the original generators of $\mathcal T^q(\lambda)$.)

An element of the Chekhov-Fock algebra $\TT$ is a linear combination of monomials $Z_{i_1}^{n_1}Z_{i_2}^{n_2} \dots Z_{i_l}^{n_l}$ in the generators $Z_i$, with $n_1$, $n_2$, \dots, $n_l\in \Z$. Because of the skew-commutativity relation $Z_iZ_j = \omega^{2\sigma_{ij}} Z_jZ_i$, the order of the variables in such a  monomial does matter. It is convenient to use the following symmetrization trick. 

The \emph{Weyl quantum ordering} for  $Z_{i_1}^{n_1}Z_{i_2}^{n_2} \dots Z_{i_l}^{n_l}$ is the monomial
$$
[Z_{i_1}^{n_1}Z_{i_2}^{n_2} \dots Z_{i_l}^{n_l}] = \omega^{-\sum_{u<v} n_un_v\sigma_{i_ui_v}} Z_{i_1}^{n_1}Z_{i_2}^{n_2} \dots Z_{i_l}^{n_l}. 
$$
The formula is specially designed so that $[Z_{i_1}^{n_1}Z_{i_2}^{n_2} \dots Z_{i_l}^{n_l}] $ is invariant under any permutation of the $Z_{i_u}^{n_u}$. 

Note that the Chekhov-Fock algebra $\TT$ depends only on $\omega^2$, and that there is a canonical isomorphism $\TT\cong \mathcal T^{-\omega}(\lambda)$. However, the Weyl quantum ordering does depend  on $\omega$, and justifies the emphasis on $\omega$ (as opposed to $\omega^2$) in the notation for $\TT$.

\subsection{The Frobenius homomorphism} 
\label{subsect:Frobenius}

The following homomorphism plays a fundamental role in the classification of finite-dimensional irreducible representations of the quantum Teichm\"uller space \cite{BonLiu}.

\begin{prop}
\label{prop:Frobenius}
If  $\iota = \omega^{N^2}$, there  is a unique algebra homomorphism
$$
\mathbf F^\omega \colon \TTT   \to \TT$$ 
which maps each generator $Z_i\in \TTT$ to $Z_i^N\in \TT$, where in the first instance $Z_i\in \TTT$ denotes the generator associated to the $i$--th edge $e_i$ of $\lambda$, whereas the second time $Z_i\in \TT$ denotes the generator of $\TT$ associated to the same edge~$e_i$. 
\end{prop}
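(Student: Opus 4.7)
My approach is to invoke the universal property of the Chekhov-Fock algebra $\TTT$, which (as defined in \S\ref{sect:CheFock}) is presented by the generators $Z_1^{\pm1},\dots,Z_n^{\pm1}$ subject solely to the quadratic skew-commutativity relations $Z_iZ_j = \iota^{2\sigma_{ij}}Z_jZ_i$. To construct an algebra homomorphism out of $\TTT$ it therefore suffices to prescribe the images of the generators and to verify that these images satisfy the above relations inside the target algebra $\TT$. Uniqueness is automatic, since the $Z_i$ generate $\TTT$ as an algebra, so at most one algebra homomorphism can extend a given assignment on generators.

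For existence I would set $\mathbf F^\omega(Z_i) := Z_i^N \in \TT$ and reduce everything to one short skew-commutation computation in $\TT$. Iterating the defining relation $Z_iZ_j = \omega^{2\sigma_{ij}}Z_jZ_i$ to move a single $Z_i$ past $N$ copies of $Z_j$ yields $Z_iZ_j^N = \omega^{2N\sigma_{ij}}Z_j^NZ_i$; repeating to move $N$ copies of $Z_i$ past $Z_j^N$ then gives
$$
Z_i^NZ_j^N \;=\; \omega^{2N^2\sigma_{ij}}\, Z_j^NZ_i^N \;=\; \iota^{2\sigma_{ij}}\, Z_j^NZ_i^N,
$$
where the last equality uses the hypothesis $\iota = \omega^{N^2}$. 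This is precisely the defining relation of $\TTT$, so the assignment $Z_i\mapsto Z_i^N$ extends (uniquely) to a well-defined algebra homomorphism $\mathbf F^\omega \colon \TTT \to \TT$.

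The hard part is essentially nonexistent: the rigidity of the presentation of the Chekhov-Fock algebra by quadratic skew-commutation relations alone makes the verification purely formal, and the entire content of the proposition is the exponent match $\omega^{N^2}=\iota$ that has been built into the hypothesis. The genuinely subtle statement — the compatibility between this Frobenius homomorphism $\mathbf F^\omega$, the Chebyshev homomorphism $\mathbf T^A$ of Theorem~\ref{thm:ChebSkeinRelation}, and the quantum trace homomorphism $\Tr_\lambda^\omega$, from which the miraculous cancellations of Theorem~\ref{thm:ChebFrobTraceIntro} ultimately flow — is the content of the main theorem of this section, and is of an entirely different order of difficulty from the elementary verification needed here.
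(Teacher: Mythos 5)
Your proposal is correct and coincides with the paper's own argument: the paper's proof is exactly the one-line computation $Z_i^N Z_j^N = \omega^{2\sigma_{ij}N^2} Z_j^N Z_i^N = \iota^{2\sigma_{ij}} Z_j^N Z_i^N$, with the presentation of $\TTT$ by the skew-commutation relations (and hence uniqueness) taken for granted just as you make explicit.
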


\begin{proof}
If $Z_iZ_j = \omega^{2\sigma_{ij}} Z_jZ_i$ for generators $Z_i$, $Z_j$ of $\TT$, then $Z_i^N Z_j^N \linebreak = \omega^{2\sigma_{ij}N^2} Z_j^N Z_i^N   = \iota^{2\sigma_{ij}} Z_j^N Z_i^N$.
\end{proof}

Borrowing the terminology from \cite{FockG}, the algebra homomorphism $\mathbf F^\omega \colon \TTT   \to \TT$ is the \emph{Frobenius homomorphism} of $\TT$. When $\omega^N = (-1)^{N+1}$, this Frobenius is remarkably well-behaved with respect to coordinate changes in the quantum Teichm\"uller space; see \cite{CheFoc1, BonLiu, FockG}. We will not need this property here, but it somewhat foreshadows the compatibility property of Theorem~\ref{thm:ChebyQTracesFrob}.

\subsection{The quantum trace homomorphism}
\label{sect:QTrace}

Another key ingredient is the quantum trace homomorphism  $\Tr_\lambda^\omega \colon \SSS \to \TT$ constructed in \cite{BonWon1}. 
The computations of \S \ref{sect:ChebyQTraces} will (unfortunately) require familiarity with the details of this embedding of the skein algebra  $\SSS$ into the Chekhov-Fock algebra $\TT$. We therefore need to briefly summarize the main technical features of its construction.

In particular, we need to consider situations where $S$ is a  punctured surface with
boundary,  obtained by removing finitely many points $v_1$,
$v_2$,
\dots, $v_s$ from a compact connected oriented surface
$\bar S$  with  boundary $\partial \bar S$. We require that each
component of $\partial \bar S$ contains at least one puncture $v_k$, that
there is at least one puncture, and that  the Euler characteristic of $S$ is less that half the number of components of $\partial S$. In particular, the boundary $\partial S$ consists of disjoint open intervals. These topological restrictions are
equivalent   to the existence of an ideal triangulation $\lambda$ for $S$. The Chekhov-Fock algebra $\TT$ is then defined as before, with skew-commuting  generators corresponding to the edges of $\lambda$ (including the components of $\partial S$). 

To define the skein algebra for such a punctured surface $S$ with boundary, we take a \emph{framed link} $K$ in $S\times[0,1]$ to be  a 1--dimensional framed submanifold $K \subset S \times [0,1]$ such that:
\begin{enumerate}
\item $\partial K =  K \cap \partial(S \times [0,1])$ consists of finitely many points in $(\partial S) \times [0,1]$;
\item at every point of $\partial K$, the framing is vertical, namely parallel to the $[0,1]$ factor  and  pointing in the direction of $1$;
\item for every component $e$ of $\partial S$, the points of $\partial K$ that are in $e\times [0,1]$ sit at different elevations, namely have different $[0,1]$--coordinates. 
\end{enumerate}

Isotopies of framed links are required to respect all three conditions above. The third condition is particular important for the gluing construction described below. 

The \emph{skein algebra} $\SSS$ is then defined as before, first considering the vector space $\mathcal K(S)$ freely generated by all isotopy classes of framed links, and then taking its quotient by the skein relation and by the relation that $[O]=-A^2 - A^{-2}$ for the trivial framed knot $O$. The multiplication of the  algebra structure is provided by the usual superposition operation. 

The extension to surfaces with boundary enables us to glue surfaces and skeins. 
Given two surfaces $S_1$ and $S_2$ and two boundary components $e_1 \subset \partial S_1$ and $e_2 \subset \partial S_2$, we can glue $S_1$ and $S_2$ by identifying $e_1$ and $e_2$ to obtain a new oriented  surface $S$. There is a unique way to perform this gluing so that the orientations of $S_1$ and $S_2$ match to give an orientation of $S$. We allow the ``self-gluing'' case, where the surfaces $S_1$ and $S_2$ are equal as long as the boundary components $e_1$ and $e_2$ are distinct. If we are given an ideal triangulation $\lambda_1$ of $S_1$ and an ideal triangulation $\lambda_2$ of $S_2$, these two triangulations fit together to give an ideal triangulation $\lambda$ of the glued surface $S$.

Now, suppose in addition that we are given skeins $[K_1] \in \mathcal S^A (S_1)$ and $[K_2]\in\mathcal S^A (S_2)$, represented by framed links $K_1$ and $K_2$  such that $K_1 \cap( e_1 \times[0,1])$ and $K_2 \cap( e_2 \times[0,1])$ have the same number of points. We can then arrange by an isotopy of framed links that $K_1$ and $K_2$ fit together to give a framed link $K\subset  S \times [0,1]$; note that it is here important that the framings be vertical pointing upwards on the boundary, so that they fit together to give a framing of $K$. By our hypothesis that the points of $K_1 \cap( e_1 \times[0,1])$ (and of $K_2 \cap( e_2 \times[0,1])$) sit at different elevations, the framed link $K$ is now uniquely determined up to isotopy. Also, this operation is well behaved with respect to the skein relations, so that $K$ represents a well-defined element $[K] \in \SSS$. 
We say that $[K] \in \SSS$ is \emph{obtained by gluing} the two skeins $[K_1] \in \mathcal S^A (S_1)$ and $[K_2]\in\mathcal S^A (S_2)$. 

The gluing operation is also well-behaved with respect to Chekhov-Fock algebras. Indeed, if $S$ is obtained by gluing two surfaces $S_1$ and $S_2$ along boundary components $e_1 \subset \partial S_1$ and $e_2 \subset \partial S_2$, and if the ideal triangulations $\lambda_1$ of $S_1$ and  $\lambda_2$ of $S_2$ are glued to provide an ideal triangulation $\lambda$ of $S$, there is a canonical embedding $\Phi\colon \TT \to \mathcal T^\omega(\lambda_1) \otimes  \mathcal T^\omega(\lambda_2)$ defined as follows. If $Z_e$ is a generator of $\TT$ associated to an edge $e$ of $\lambda$:
\begin{enumerate}
\item $\Phi(Z_e) = Z_{e_1}\otimes Z_{e_2}$ if $e$ is the edge coming from the boundary components $e_1 \subset \partial S_1$ and $e_2 \subset \partial S_2$ that are glued together, and if $Z_{e_1}\in \mathcal T^\omega(\lambda_1)$ and $Z_{e_2}\in \mathcal T^\omega(\lambda_2)$ are the generators associated to these edges $e_1$, $e_2$ of $\lambda_1$, $\lambda_2$;

\item $\Phi(Z_e) = Z_{e_1'}\otimes 1$ if $e$ corresponds to an edge $e_1'$ of $\lambda_1$ that is not glued to an edge of $\lambda_2$;

\item $\Phi(Z_e) = 1 \otimes Z_{e_2'}$ if $e$ corresponds to an edge $e_2'$ of $\lambda_2$ that is not glued to an edge of $\lambda_1$. 
\end{enumerate}
There is a similar embedding $\TT \to \mathcal T^\omega(\lambda_1)$ in the case of self-gluing, when the surface $S$ and the ideal triangulation $\lambda$ are obtained by gluing two distinct boundary components of a surface $S_1$ and an ideal triangulation $\lambda_1$ of $S_1$. 

Finally,  a \emph{state} for a skein $[K]\in \SSS$ is the assignment $s\colon \partial K \to \{ +, -\}$ of a sign $\pm$ to each point of $\partial K$. Let $\SSSSS(S)$ be the algebra consisting of linear combinations of \emph{stated skeins}, namely of  skeins endowed with states. In particular, $\SSSSS(S) = \SSS$ when $S$ has empty boundary. 

In the case where $K \in \SSS$ is obtained by gluing the two skeins $K_1 \in \mathcal S^A (S_1)$ and $K_2\in\mathcal S^A (S_2)$, the states $s\colon \partial K \to \{ +,-\}$, $s_1\colon \partial K_1 \to \{ +,-\}$, $s_2\colon \partial K_2 \to \{ +,-\}$ are \emph{compatible} if $s_1$ and $s_2$ coincide on $\partial K_1 \cap( e_1 \times[0,1])=\partial K_2 \cap( e_2 \times[0,1])$ for the identification given by the gluing, and if $s$ coincides with the restrictions of $s_1$ and $s_2$ on $\partial K \subset \partial K_1 \cup \partial K_2$.

\begin{thm}[\cite{BonWon1}]
\label{thm:QTrace}
  For $A=\omega^{-2}$, there  is a unique family of  algebra homomorphisms 
$$\Tr_\lambda^\omega \colon \SSSSS(S) \to \TT,$$
 defined for each punctured surface $S$ with boundary  and for each ideal triangulation $\lambda$ of $S$, such that:
\begin{enumerate}
\item {\upshape (State Sum Property)} If the surface $S$ is obtained by gluing $S_1$ to $S_2$, if  the ideal triangulation $\lambda$ of $S$ is obtained by combining the ideal triangulations $\lambda_1$ of $S_1$ and $\lambda_2$ of $S_2$, and if the skeins $[K_1] \in \mathcal S^A (S_1)$ and $[K_2]\in\mathcal S^A(S_2)$ are glued together to give $[K] \in \SSS$, then
$$
\quad \quad \quad
 \Tr_\lambda^\omega \bigl([K,s]\bigr) = \sum_{\text{compatible }s_1, s_2}\Tr_{\lambda_1}^\omega \bigl([K_1, s_1]\bigr) \otimes \Tr_{\lambda_2}^\omega \bigl([K_2, s_2]\bigr) $$
where the sum is over all states $s_1\colon \partial K_1 \to \{+,-\}$ and $s_2\colon \partial K_2 \to \{+,-\}$ that are compatible with $s\colon \partial K \to \{+,-\}$ and with each other. Similarly if the surface $S$, the ideal triangulation $\lambda$ of $S$, and the skein $[K] \in \SSS$ are obtained by gluing  the surface $S_1$, the ideal triangulation $\lambda_1$ of $S_1$, and the skein $[K_1] \in \mathcal S^A(S_1)$, respectively,  to themselves, then
$$
\quad \quad
 \Tr_ \lambda^\omega \bigl([K,s]\bigr) = \sum_{\text{compatible }s_1}\Tr_{\lambda_1}^\omega\bigl([K_1, s_1]\bigr)  .$$

\item {\upshape (Elementary Cases)} When $S$ is a triangle and $K$ projects to a single arc embedded in $S$, with vertical framing, then 
\begin{enumerate}
\item  in the case of Figure~{\upshape\ref{fig:TriangleOneArc}(a)}, where $\epsilon_1$, $\epsilon_2=\pm$ are the signs associated by the state $s$ to the end points of $K$, 
$$
\quad\quad \quad \quad \quad
\Tr_\lambda^\omega \bigl([K, s]\bigr) = 
\left\{  
\begin{aligned}
&0 \text { if } \epsilon_1 =- \text{ and } \epsilon_2 = +\\
& [ Z_1^{\epsilon_1} Z_2^{\epsilon_2} ] \text{ if } \epsilon_1 \not=- \text{ or } \epsilon_2 \not= +
\end{aligned} 
\right.
$$
where $Z_1$ and $Z_2$ are the generators of $\TT$ associated to the sides $e_1$ and $e_2$ of $S$ indicated, and where $[ Z_1^{\epsilon_1} Z_2^{\epsilon_2} ]=\omega ^{-\epsilon_1\epsilon_2} Z_1^{\epsilon_1} Z_2^{\epsilon_2} =  \omega ^{\epsilon_1\epsilon_2} Z_2^{\epsilon_2} Z_1^{\epsilon_1}$ (identifying the sign $\epsilon = \pm$ to the exponent $\epsilon=\pm1$) is the Weyl quantum ordering defined in {\upshape \S \ref{sect:CheFock}};

\item in the case of Figure~{\upshape\ref{fig:TriangleOneArc}(b)}, where the end point of $K$ marked by $\epsilon_1$ is higher in $\partial S \times [0,1]$ than the point marked by $\epsilon_2$, 
$$
\Tr_\lambda^\omega \bigl([K, s]\bigr) = 
\left\{  
\begin{aligned}
0 &\text{ if } \epsilon_1 = \epsilon_2\\
 -\omega^{-5} &\text { if } \epsilon_1 =+ \text{ and } \epsilon_2 = -\\
 \omega^{-1} &\text{ if } \epsilon_1 =- \text{ and } \epsilon_2 = +
\end{aligned} 
\right.
$$
\end{enumerate}
\end{enumerate} 

In addition, the homomorphism $\Tr_\lambda^\omega \colon \SSS \to \TT$ is injective when the surface $S$ has no boundary. \qed
\end{thm}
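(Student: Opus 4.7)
The plan is to construct the homomorphisms $\Tr_\lambda^\omega$ by the cut-and-paste prescription forced by the two axioms of the theorem, and then verify that this recipe is well-defined, respects the skein relation, and is compatible with superposition. Uniqueness is essentially automatic from the axioms, so the bulk of the work goes into existence and injectivity.

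For uniqueness, I first put any stated skein $[K,s]\in \SSSSS(S)$ in general position with respect to $\lambda$: the projection of $K$ meets each edge of $\lambda$ transversally, and near each edge the strands of $K$ lie at distinct elevations in the thickened surface. Cutting $S$ along the edges of $\lambda$ decomposes $S$ into thickened triangles and breaks $K$ into a finite collection of framed arcs living in these triangles. The State Sum Property, applied iteratively edge by edge, then expresses $\Tr_\lambda^\omega([K,s])$ as a sum over all compatible states on the cut edges of tensor products of traces of pieces in individual triangles. Each piece inside a triangle can be further reduced, using the skein relation and isotopy invariance (including the framing normalization that follows from $A=\omega^{-2}$), to a linear combination of embedded arcs with vertical framing, whose traces are prescribed by the Elementary Cases. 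Hence the two axioms uniquely determine $\Tr_\lambda^\omega$.

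For existence, I take the formula sketched above as the \emph{definition} of $\Tr_\lambda^\omega([K,s])$ and must then check that it is well-defined. This is the main obstacle: the value should not depend on the generic position chosen for $K$, nor on the order in which adjacent triangles are glued. This reduces to checking invariance under the elementary moves relating any two admissible positions --- pushing a strand across a vertex, swapping the elevations of two strands sharing an edge, or transferring a small arc between two adjacent triangles across their common edge. Each such move yields a finite identity in the Chekhov-Fock algebra of one or two triangles, in which the $\omega$-powers produced by the Weyl quantum ordering must conspire precisely with the state sum to cancel. Once well-definedness is established, the skein relation $[K_1]=A^{-1}[K_0]+A[K_\infty]$ is verified by a local computation inside a single triangle containing the Kauffman disk, and the algebra homomorphism property follows from the State Sum applied to the vertical splitting $S\times[0,1] = S\times[0,\frac12]\cup S\times[\frac12,1]$.

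Injectivity would be obtained by equipping the Chekhov-Fock algebra $\TT$ with a suitable monomial order and showing that the leading monomial of $\Tr_\lambda^\omega([K,s])$ remembers both the isotopy class of $K$ and the state $s$; for skeins in minimal position with respect to $\lambda$, the exponent of each generator $Z_i$ in the leading term should read off the geometric intersection number of $K$ with the edge $e_i$, while the signs recorded by $s$ determine the signs of these exponents.
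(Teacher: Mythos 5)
First, note that the paper you are working from does not prove Theorem~\ref{thm:QTrace} at all: it is quoted from \cite{BonWon1}, and the proof there occupies most of that article. Measured against that construction, your uniqueness argument (cut along the edges of $\lambda$, apply the State Sum Property edge by edge, reduce inside each triangle to the Elementary Cases) is essentially correct and is the easy half. The genuine gap is in existence. You take the cut-into-triangles state sum as the \emph{definition} and then assert that well-definedness ``reduces to checking invariance under elementary moves'' in which the $\omega$-powers ``must conspire precisely to cancel.'' That verification is not a routine check to be waved at: it is the entire mathematical content of the theorem, and it is exactly what forces the specific constants $-\omega^{-5}$ and $\omega^{-1}$ in Case 2(b) and the Weyl ordering in Case 2(a). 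In \cite{BonWon1} this is organized by inserting \emph{biangles} between the triangles, so that all crossings, U-turns and elevation exchanges are absorbed into biangles, where the invariant is the ordinary Kauffman bracket of tangles (this is Proposition~\ref{prop:BiangleQTrace} of the present paper), and one then proves compatibility of the triangle values with arbitrary biangle insertions (cf.\ Proposition~\ref{prop:BiangleTriangleStateSum}). Without some such mechanism, your ``finite identity in the Chekhov--Fock algebra of one or two triangles'' is a statement of what must be proved, not a proof of it; nothing in your sketch indicates why the prescription survives pushing a strand across an edge or exchanging elevations.

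Two further concrete problems. The algebra homomorphism property does \emph{not} follow from the State Sum Property applied to ``$S\times[0,1]=S\times[0,\frac12]\cup S\times[\frac12,1]$'': that axiom concerns gluing surfaces along boundary edges, not slicing the thickening interval, so there is no axiom to apply there, and multiplicativity has to be checked directly from the construction (it interacts nontrivially with the elevation ordering of boundary points). Likewise the skein relation cannot simply be ``verified by a local computation inside a single triangle,'' since the Elementary Cases assign values only to embedded arcs; a crossing must first be resolved, or pushed into a biangle where the Kauffman bracket handles it, and the consistency of that reduction is again part of the unproven well-definedness. Your injectivity sketch (a monomial order in which the leading term of $\Tr_\lambda^\omega$ records the geometric intersection numbers with the edges) is in the spirit of the argument of \cite{BonWon1} (the result cited as Prop.~29 there), but as stated it is also only a plan: one needs the basis of the skein algebra given by simple multicurves and a check that distinct basis elements have distinct leading monomials which cannot cancel in a linear combination.
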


\begin{figure}[htbp]
\SetLabels
(  .19 * -.2 )  (a) \\
( .81  * -.2 )  (b) \\
( .08 * .63 ) $\epsilon_1$ \\
( .3 * .63 ) $\epsilon_2$ \\
( .09 * .2 ) $e_1$ \\
( .3 * .2 ) $e_2$ \\
( .73 * .78 )  $\epsilon_1$\\
( .65 * .38 ) $\epsilon_2$ \\
\endSetLabels
\centerline{\AffixLabels{\includegraphics{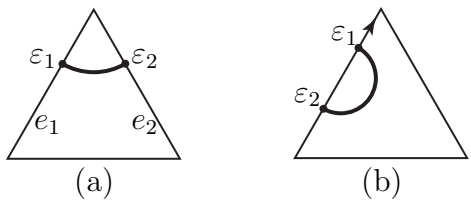}}}
\vskip 10pt
\caption{Elementary skeins in the triangle}
\label{fig:TriangleOneArc}
\end{figure}

When $A=\pm1$ and $S$ has no boundary, $\Tr_\lambda^\omega\bigl([K]\bigr)$ is just the Laurent polynomial which, for an element $r \in \RS$ or $\RR$, expresses the trace $\Tr_r \bigl([K]\bigr) $ in terms of (the square roots of) the shear coordinates of $r$ with respect to $\lambda$. See \cite{BonWon1}.

When gluing skeins, it is crucial to keep track of the ordering of boundary points by their elevation. Figure~\ref{fig:TriangleOneArc}(b) illustrates a convenient method to describe this ordering in pictures representing a link $K$ in $S\times[0,1]$. For a component $d$ of $\partial S$, we first choose an orientation of $d$ and indicate it by an arrow. We then modify $K$ by an isotopy (respecting the elevations of boundary points) so that the ordering of the points of $\partial K \cap (d \times[0,1])$ by increasing elevation exactly corresponds to the ordering of their projections to $S$ for the chosen orientation of $d$; in this way, the ordering of these points becomes immediately clear on the picture. 
In addition, we will always assume in pictures that the framing is vertical, namely pointing towards the reader.

\begin{rem}
As is traditional in the physics literature, the states for a link $K$ are just a convenient way of specifying a basis element for a vector space $\bigotimes_{x\in \partial K}V_x$, where each $V_x$ is a 2--dimensional vector space endowed with a preferred basis $\{ v_+, v_-\}$. In this framework, the quantum trace map $\Tr_\lambda^\omega$ associates to each skein $[K]\in \SSS$  the  linear map $\bigotimes_{x\in \partial K}V_x \to \TT$ that, for each state $s\colon \partial K \to \{+, -\}$, assigns $\Tr_\lambda^\omega \bigl([K, s]\bigr)  \in \TT$ to the basis element $\bigotimes_{x\in \partial K}v_{s(x)} \in \bigotimes_{x\in \partial K}V_x$ corresponding to $s$. In this point of view, the State Sum Property just corresponds to a contraction property, using the basis $\{ v_+, v_-\}$ to identify the space $V_x$ to its dual $V_x^*$. 
\end{rem}

\subsection{Compatibility between the Chebyshev, quantum trace and Frobenius homomorphisms}

Theorem~\ref{thm:ChebyQTracesFrob} below shows that the  Chebyshev homomorphism $\mathbf T^A \colon \SSSS \to \SSS$, the Frobenius homomorphism $\mathbf F^A \colon \TTT \to \TT$, and the quantum trace homomorphisms $\Tr_\lambda^\omega \colon \SSS \to \TT$ and $\Tr_\lambda^\iota \colon \SSSS \to \TTT$ are remarkably compatible with each other. Theorem~\ref{thm:ChebFrobTraceIntro} in the introduction is an immediate consequence of this statement. 

\begin{thm}
\label{thm:ChebyQTracesFrob}
Let $S$ be a punctured surface with no boundary, and let $\lambda$ be an ideal triangulation of $S$. Then, if $A^4$ is a primitive $N$--root of unity, $A=\omega^{-2}$, $\epsilon = A^{N^2}$ and $\iota = \omega^{N^2}$, the diagram 
$$
\xymatrix{
\SSS
 \ar[r]^{\Tr_{\lambda}^\omega} 
 & \TT\\
\SSSS
 \ar[r]^{\Tr_{\lambda}^\iota}
 \ar[u]^{\mathbf T^A}
 & \TTT
 \ar[u]_{\mathbf F^\omega}}
$$
is commutative. Namely, for every skein $[K]\in \SSSS$, the quantum trace $\Tr_\lambda^\omega\bigl([K^{T_N}]\bigr)$ of $[K^{T_N}] = \mathbf T^A\bigl([K]\bigr)$ is obtained from the classical trace polynomial $\Tr_\lambda^\iota \bigl([K]\bigr)$ by replacing each generator $Z_i \in \TTT$ by $Z_i^N\in \TT$. 
\end{thm}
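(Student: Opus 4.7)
The strategy is to use the algebra-homomorphism property to reduce to a simple closed curve, and then to unfold both sides of the identity via the state-sum formula for the quantum trace.

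\textbf{Step 1 (reduction to simple closed curves).} Both $\Tr_\lambda^\omega \circ \mathbf T^A$ and $\mathbf F^\omega \circ \Tr_\lambda^\iota$ are algebra homomorphisms $\SSSS \to \TT$, so it suffices to check equality on an algebra generating set of $\SSSS$. By the basis theorem of Przytycki \cite{PrzS}, $\SSSS$ is spanned as a $\C$-vector space by isotopy classes of simple multi-curves with vertical framing; since every such simple multi-curve is the product, in $\SSSS$, of its (disjoint) simple closed curve components, the subalgebra generated by the skeins of simple closed curves $\gamma \subset S$ (with vertical framing) is all of $\SSSS$. For such $\gamma$, parallel copies equal powers, $[\gamma^{(i)}] = [\gamma]^i$, so $\mathbf T^A([\gamma]) = [\gamma^{T_N}] = T_N([\gamma])$, and the identity reduces to
\[
T_N\bigl(\Tr_\lambda^\omega([\gamma])\bigr) \ =\ \mathbf F^\omega\bigl(\Tr_\lambda^\iota([\gamma])\bigr) \qquad \text{in } \TT.
\]

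\textbf{Step 2 (state-sum reformulation).} Put $\gamma$ in general position so that it transversally crosses the edges of $\lambda$ at points $p_1, \ldots, p_m$ in cyclic order, passing in turn through triangles $T_1, \ldots, T_m$. Cutting $S$ along the edges of $\lambda$ and applying the State Sum Property of Theorem~\ref{thm:QTrace} gives
\[
\Tr_\lambda^\omega([\gamma]) \ =\ \sum_{s\colon\{p_1,\ldots,p_m\}\to\{\pm\}} \ \prod_j \Tr_{\lambda|_{T_j}}^\omega\bigl([\gamma\cap T_j,\,s|_{\partial}]\bigr),
\]
where each triangle factor is a Weyl-ordered monomial in $\TT$ computed from the Elementary Cases of Theorem~\ref{thm:QTrace}(2). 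Packaging these contributions as a $2\times2$ transfer matrix $M_j^\omega$ at each triangle, with rows and columns indexed by the states $\pm$ at the two adjacent edge crossings, yields the compact form $\Tr_\lambda^\omega([\gamma]) = \mathrm{tr}\bigl(M_1^\omega M_2^\omega\cdots M_m^\omega\bigr)$, where $\mathrm{tr}$ denotes the ordinary matrix trace of a matrix with entries in $\TT$. Similarly $\Tr_\lambda^\iota([\gamma]) = \mathrm{tr}(M_1^\iota\cdots M_m^\iota)$, and the right-hand side $\mathbf F^\omega(\Tr_\lambda^\iota([\gamma]))$ is obtained from this by further substituting $Z_i \mapsto Z_i^N$ in every entry.

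\textbf{Step 3 (main obstacle: miraculous cancellations).} What remains is the purely algebraic identity
\[
T_N\bigl(\mathrm{tr}(M_1^\omega\cdots M_m^\omega)\bigr) \ =\ \mathbf F^\omega\bigl(\mathrm{tr}(M_1^\iota\cdots M_m^\iota)\bigr) \qquad \text{in } \TT,
\]
and this is the technical heart of the proof. The conceptual input is the Frobenius observation that the skew-commutation relation $Z_iZ_j = \omega^{2\sigma_{ij}} Z_jZ_i$ becomes $Z_i^N Z_j^N = \iota^{2\sigma_{ij}} Z_j^N Z_i^N$ under the hypothesis $\iota = \omega^{N^2}$, so that taking $N$-th powers interacts cleanly with the substitution $\omega \mapsto \iota$, $Z_i \mapsto Z_i^N$ that describes $\mathbf F^\omega$. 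However, the non-commutative expansion of the left-hand side generates a number of monomials that grows polynomially in $N$, whereas the right-hand side contains only a bounded number of monomials, each of which is a Laurent monomial in the $N$-th powers $Z_i^N$. The bulk of the work is therefore a brute-force verification, via careful manipulation of Weyl-ordered monomials and the skew-commutation relations in $\TT$, that all the non-$N$-th-power cross-terms cancel precisely when $A^4$ is a primitive $N$-root of unity. I expect this computation to proceed by induction on a suitable complexity of $\gamma$ with respect to $\lambda$, reducing to local identities in the spirit of Lemmas~\ref{lem:ChebSkeinRelationPuncTorus} and~\ref{lem:ChebSkeinRelationPuncSphere}.
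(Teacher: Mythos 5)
Your Steps 1 and 2 are sound and in fact parallel the paper's own reduction: the paper also uses the homomorphism property to reduce to a generating set (it uses $\lambda$--simple skeins, citing Bullock, rather than arbitrary simple closed curves, which sidesteps ordering issues when a curve crosses a triangle more than once), and it also unfolds $\Tr_\lambda^\omega$ by the State Sum Property. But Step 3 is where the theorem actually lives, and there you have only named the difficulty, not resolved it. Saying that ``the bulk of the work is a brute-force verification that all the non-$N$-th-power cross-terms cancel'' and that you ``expect this computation to proceed by induction'' is not a proof; no mechanism for the cancellation is exhibited, and a direct expansion of $T_N$ applied to the non-commutative trace polynomial $\mathrm{tr}(M_1^\omega\cdots M_m^\omega)$ is exactly the computation whose feasibility is in question. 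Worse, the local identities you propose to fall back on (Lemmas~\ref{lem:ChebSkeinRelationPuncTorus} and \ref{lem:ChebSkeinRelationPuncSphere}) are, in the paper, themselves proved \emph{by} the quantum-trace computations underlying this theorem, and they are what makes $\mathbf T^A$ well defined in the first place; so as stated your plan points back at statements that are downstream of the missing argument rather than available inputs.

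For comparison, the paper's route through the cancellation is skein-theoretic rather than a direct polynomial expansion: it writes $T_N=S_N-S_{N-2}$ (Lemma~\ref{lem:FirstSecondChebyshevs}), identifies the $S_N$--thread along $K$ with a Jones--Wenzl idempotent (Lemma~\ref{lem:ChebJW2}), and computes the quantum traces of $JW_N$ in the biangle and the triangle explicitly, with quantum-binomial coefficients (Propositions~\ref{prop:QTraceBiangleJW} and \ref{prop:QTraceTriangleJW}). Since $JW_N$ is undefined when $A^{4N}=1$, this is done for generic $A$ (Propositions~\ref{prop:QTraceChebyshev2} and \ref{prop:QTraceChebyshev2bis}) and one then lets $A^4$ tend to a primitive $N$--root of unity: Lemma~\ref{lem:ChebyshevDifference} shows the interior terms of $[K^{S_N}]$ and $[K^{S_{N-2}}]$ cancel in the difference, Lemma~\ref{lem:ChebyshevCancel} shows mixed terms tend to $0$, and Lemma~\ref{lem:ChebyshevStay} shows the surviving terms have coefficient exactly $1$, yielding Proposition~\ref{prop:QTraceChebyshev1}; the theorem then follows by comparing with $\Tr_\lambda^\iota([K])$ and the observation $\mathbf F^\omega([Z_1^{\epsilon_1}\cdots Z_k^{\epsilon_k}])=[Z_1^{N\epsilon_1}\cdots Z_k^{N\epsilon_k}]$. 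To complete your proposal you would need to supply an argument of comparable substance for your Step 3, including a device (such as the Jones--Wenzl/generic-$A$ limiting trick) to organize the root-of-unity cancellations.
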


The proof of Theorem~\ref{thm:ChebyQTracesFrob} will occupy all of the next section \S\ref{sect:ChebyQTraces}. As should already be apparent from the statement, it involves a large number of ``miraculous cancellations''\kern -1pt.

\section{Quantum traces of Chebyshev skeins}
\label{sect:ChebyQTraces}

\subsection{Algebraic preliminaries}
\label{sect:Qnumbers}

For a parameter $a\in \C-\{0\}$, we will make use of two types of quantum integers,
$$
\QInt n{a} = \frac{a^{n}-1}{a-1} = a^{n-1} + a^{n-2} + a^{n-3} + \dots +1,
$$
and
$$
\QQInt{n}{a} = \frac{a^{n}-a^{-n}}{a-a^{-1}} = a^{n-1} + a^{n-3} + a^{n-5} + \dots + a^{-(n-1)} = a^{-(n-1)}\QInt{n}{a^2}.
$$
In particular, the first type of quantum integers will occur in the quantum factorials
$$
\QInt{n}{a}! = \QInt{n}{a} \QInt{n-1}{a} \dots \QInt{2}{a}\QInt{1}{a},
$$
and in the quantum binomial coefficients
$$
\QBinom{n}{p}{a}
=\frac
{\QInt{n}{a}\QInt{n-1}{a} \dots \QInt{n-p+2}{a}\QInt{n-p+1}{a}}
{\QInt{p}{a}\QInt{p-1}{a} \dots \QInt{2}{a}\QInt{1}{a}}
=\frac
{\QInt{n}{a}! }
{\QInt{p}{a}! \QInt{n-p}{a}! }.
$$
Although this definition of the quantum binomial coefficient  $\QBinom{n}{p}{a}$ requires that  $a^{k}\not=1$ for every $k\leq \min\{p, n-p\}$, see Lemmas~\ref{lem:Qbinom} or \ref{lem:QBinomInversions} below to make sense of it in all cases.

Quantum binomial coefficients naturally arise in the following classical formula (see for instance \cite[\S IV.2]{Kassel}). 

\begin{lem}[Quantum Binomial Formula]
\label{lem:Qbinom}
If the quantities  $X$ and $Y$ are such that $YX = a XY $, then 
$$
(X+Y)^n = \sum_{p=0}^n
 \QBinom{n}{p}{a} \kern -2pt
X^{n-p} Y^{p}.
$$
\vskip-\belowdisplayskip
\vskip -\baselineskip 
\vskip -7pt \qed

\vskip\belowdisplayskip
\vskip \baselineskip 
\end{lem}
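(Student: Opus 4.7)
The plan is to proceed by induction on $n$, since this is the standard route for the quantum binomial formula and avoids any appeal to external combinatorial identities beyond a single quantum Pascal relation. The base case $n=0$ (or $n=1$) is trivial from the conventions $\QBinom{0}{0}{a}=1$ (resp.\ $\QBinom{1}{0}{a}=\QBinom{1}{1}{a}=1$).

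For the inductive step, I would assume the identity at level $n$ and multiply on the right by $X+Y$. The crucial ingredient for moving $Y$ past $X$ is the iterated commutation relation
$$
Y^p X = a^p X Y^p,
$$
which follows immediately by induction on $p$ from the defining relation $YX=aXY$. Using this to rewrite $X^{n-p}Y^p \cdot X = a^p X^{n-p+1}Y^p$, one obtains
$$
(X+Y)^{n+1} = \sum_{p=0}^{n} \QBinom{n}{p}{a}\bigl(a^p X^{n-p+1} Y^p + X^{n-p} Y^{p+1}\bigr),
$$
and after reindexing, the coefficient of $X^{n+1-p}Y^p$ on the right-hand side becomes $a^p \QBinom{n}{p}{a} + \QBinom{n}{p-1}{a}$.

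To conclude I would then verify the quantum Pascal identity
$$
\QBinom{n+1}{p}{a} = a^p \QBinom{n}{p}{a} + \QBinom{n}{p-1}{a}.
$$
This is a short algebraic manipulation based on the definition of $\QInt{n}{a} = \frac{a^n-1}{a-1}$: reducing the right-hand side to a common denominator yields a factor $a^p \QInt{n-p+1}{a} + \QInt{p}{a} = \frac{a^{n+1}-a^p+a^p-1}{a-1} = \QInt{n+1}{a}$, from which the identity falls out.

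There is no real obstacle here; the only mild care needed is to work symbolically, since some of the quantum integers $\QInt{k}{a}$ may vanish when $a$ is a root of unity. For this reason I would present the argument first at the level of the universal skew-polynomial ring $\C\langle X,Y\rangle/(YX-aXY)$ over the polynomial ring $\C[a,a^{-1}]$, where the quantum binomial coefficients are honest Laurent polynomials in $a$, and then specialize to the given value of $a$. Since a self-contained exposition is already available in \cite[\S IV.2]{Kassel}, in the actual write-up it suffices to refer the reader there.
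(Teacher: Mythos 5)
Your proof is correct, and it is exactly the standard induction (via the quantum Pascal identity $\QBinom{n+1}{p}{a} = a^p \QBinom{n}{p}{a} + \QBinom{n}{p-1}{a}$ and the relation $Y^pX = a^pXY^p$) that the paper implicitly relies on: the paper gives no proof of its own, simply citing \cite[\S IV.2]{Kassel}, where this same argument appears. Your remark about working universally in $a$ and then specializing is also consistent with the paper's convention that this formula (or Lemma~\ref{lem:QBinomInversions}) is what gives meaning to $\QBinom{n}{p}{a}$ when some $\QInt{k}{a}$ vanish.
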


We will encounter quantum binomial coefficients in a more combinatorial setting, where they occur as generating functions. 
Let $P$ be a linearly  ordered set of $n$ points. A \emph{state} for $P$ is a map $s\colon P \to \{ +, -\}$ assigning a sign $s(x)=+$ or $-$ to each point $x\in P$. 
An \emph{inversion} for $s$ is a pair $(x,x')$ of two points $x$, $x'\in P$ such that $x<x'$ for the ordering of $P$, but such that $s(x)>s(x')$ (namely $s(x) =+$ and $s(x')=-$). Let $\iota (s)$ denote the number of inversions of $s$, and let $|s|=\# \{ x\in P; s(x)=+\}$ be the number of $+$ signs in $s$. 

\begin{lem}
\label{lem:QBinomInversions}
$$
\QBinom{n}{p}{a}
=
\sum_{|s|=p} a^{\iota(s)}
$$
where the sum is over all states $s\colon \{1, 2, \dots, n\} \to \{ +, -\}$ with $|s|=p$. 
\end{lem}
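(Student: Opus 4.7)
My plan is to deduce the identity from the Quantum Binomial Formula of Lemma~\ref{lem:Qbinom}, by expanding $(X+Y)^n$ in two different ways inside the quantum plane $\C\langle X,Y\rangle /(YX - a XY)$. In this algebra, the monomials $X^iY^j$ with $i,j\geq 0$ form a vector space basis, and Lemma~\ref{lem:Qbinom} directly gives
$$
(X+Y)^n = \sum_{p=0}^n \QBinom{n}{p}{a} X^{n-p} Y^p.
$$

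The other expansion will come from distributing the $n$-fold product naively, producing a sum over all words $w = w_1 w_2 \cdots w_n \in \{X, Y\}^n$ of length $n$. To each such $w$ I will associate a state $s_w\colon \{1,\dots,n\}\to\{+,-\}$ by the rule $s_w(i)=+$ if $w_i=Y$ and $s_w(i)=-$ if $w_i=X$, so that $|s_w|$ equals the number of $Y$'s appearing in $w$. Pushing every $X$ to the left of every $Y$ via $YX = a XY$, the word $w$ rewrites in normal form as $a^{k_w}\, X^{n - |s_w|} Y^{|s_w|}$, where $k_w$ counts the pairs $i<j$ with $w_i=Y$ and $w_j=X$. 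By construction, such pairs are exactly the inversions of the state $s_w$, so $k_w = \iota(s_w)$. Regrouping words by their associated state then yields
$$
(X+Y)^n = \sum_{p=0}^n \Bigl( \sum_{|s|=p} a^{\iota(s)}\Bigr) X^{n-p} Y^p,
$$
and comparing coefficients of each $X^{n-p}Y^p$ with the first expansion gives the desired identity.

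The argument should be essentially immediate once the bookkeeping is set up; the main small obstacle I anticipate is that, when $a$ is a root of unity, the definition of $\QBinom{n}{p}{a}$ as a ratio of quantum factorials may involve dividing by zero and hence does not a priori make sense. I will handle this by first treating $a$ as a formal indeterminate, establishing the identity as an identity of polynomials in $a$, and then specializing; this specialization is precisely the standard way to give meaning to $\QBinom{n}{p}{a}$ in the degenerate cases, as signalled by the remark following its definition. An alternative route, should this formal detour feel unsatisfactory, is a direct induction on $n$ obtained by partitioning states of $\{1,\dots,n\}$ according to the sign at position $n$, which straightforwardly reproduces the Pascal-type recurrence $\QBinom{n}{p}{a} = \QBinom{n-1}{p-1}{a} + a^p\QBinom{n-1}{p}{a}$.
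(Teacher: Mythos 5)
Your argument is correct and is essentially the paper's own proof: both expand $(X+Y)^n$ in the quantum plane as a sum over words in $X,Y$, identify each word with a state whose inversions count the factors of $a$ produced by reordering, and compare coefficients of $X^{n-p}Y^p$ with the Quantum Binomial Formula of Lemma~\ref{lem:Qbinom}. Your added care about interpreting $\QBinom{n}{p}{a}$ at roots of unity via polynomial identity in $a$ is consistent with the paper's remark that this lemma itself serves to make sense of the degenerate cases.
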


\begin{proof}
This essentially  is a rephrasing of the Quantum Binomial Formula. Indeed, the expansion of $(X+Y)^n$ is the sum of all monomials $Z_1Z_2 \dots Z_n$ where each $Z_i$ is equal to $X$ or $Y$. Such  a monomial $Z_1Z_2 \dots Z_n$ can be described by a state $s\colon \{1, 2, \dots, n\} \to \{ +, -\}$, defined by the property that $s(i)=-$ if $Z_i=X$ and $s(i)=+$ when $Z_i=Y$. Then  $Z_1Z_2 \dots Z_n= a^{\iota(s)} X^{n-|s|}Y^{|s|}$ after reordering the terms. The result now follows from the Quantum Binomial Formula of Lemma~\ref{lem:Qbinom} by considering the coefficient of $X^{n-p}Y^p$. 
\end{proof}

\subsection{Quantum traces in the biangle}
In practice, computing the quantum trace of Theorem~\ref{thm:QTrace} is made easier by including  \emph{biangles} among allowable surfaces, namely infinite strips diffeomorphic to $[0,1]\times \R$. 

\begin{figure}[htbp]
\SetLabels
( -.03 * .36 ) $\epsilon_1$ \\
( .22 * .36 ) $\epsilon_2$  \\
( .38 * .57 ) $\epsilon_1$ \\
( .38 * .07 ) $\epsilon_2$  \\
( 1.03 * .57 ) $\epsilon_1$ \\
( 1.03 *  .07) $\epsilon_2$  \\
( .09 * -.3 )  (a) \\
( .5 * -.3 )  (b) \\
( .91 * -.3 ) (c)  \\
( .09 * .8 )  $B$ \\
( .5 * .8 )  $B$ \\
( .91 * .8 ) $B$  \\
\endSetLabels
\centerline{\AffixLabels{\includegraphics{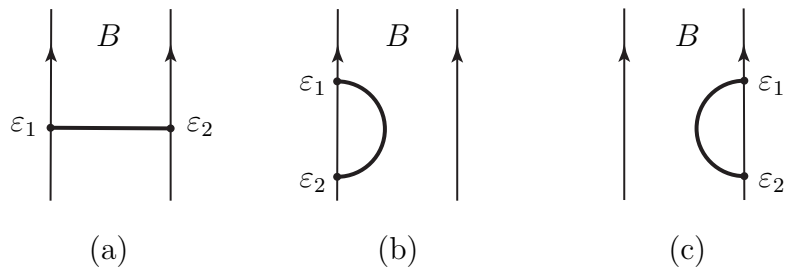}}}
\vskip 20 pt
\caption{Elementary skeins in the biangle}
\label{fig:BiangleOneArc}
\end{figure}

\begin{prop}[Proposition 13 and Lemma~14 of \cite{BonWon1}]
\label{prop:BiangleQTrace}
For $A=\omega^{-2}$, there  is a unique family of  algebra homomorphisms 
$$\Tr_B^\omega \colon \SSSSS(B) \to \C,$$
defined for all oriented biangles $B$, such that:  

\begin{enumerate}

\item {\upshape (State Sum Property)} if the biangle $B$ is obtained by gluing together two distinct biangles $B_1$ and $B_2$,  and if $[K_1] \in \mathcal S^A (B_1)$ and $[K_2]\in \mathcal S^A (B_2)$ are glued together to give $[K] \in  \mathcal S^A (B)$, then
$$
\quad \quad \quad
 \Tr_B^\omega \bigl([K,s]\bigr) = \kern -10pt \sum_{\text{compatible }s_1, s_2} \kern -10pt \Tr_{B_1}^\omega\bigl([K_1, s_1]\bigr) \Tr_{B_2}^\omega\bigl([K_2, s_2]\bigr) ,$$
where the sum is over all states $s_1\colon \partial K_1 \to \{+,-\}$ and $s_2\colon \partial K_2 \to \{+,-\}$ that are compatible with $s\colon \partial K \to \{+,-\}$ and with each other; 

\item {\upshape (Elementary Cases)}  if  the biangle $B$ is represented by a vertical strip in the plane as in Figure~{\upshape \ref{fig:BiangleOneArc}} and if $K$  projects to a single arc embedded in $B$, then
\begin{enumerate}
\item  in the case of Figure~{\upshape \ref{fig:BiangleOneArc}(a)}, where $\epsilon_1$, $\epsilon_2=\pm$ are the signs associated by the state $s$ to the end points of $K$,
$$
\Tr_B^\omega\bigl([K, s]\bigr) = 
\left\{  
\begin{aligned}
&1 \text { if } \epsilon_1 = \epsilon_2 \\
&0 \text{ if } \epsilon_1 \not= \epsilon_2 ;
\end{aligned} 
\right. \quad \quad \quad
$$

\item in the case of Figure~{\upshape \ref{fig:BiangleOneArc}(b)}, 
$$\quad \quad \quad \quad
\Tr_B^\omega\bigl([K, s]\bigr) = 
\left\{  
\begin{aligned}
0 &\text{ if } \epsilon_1 = \epsilon_2\\
 -\omega^{-5} &\text { if } \epsilon_1 =+ \text{ and } \epsilon_2 = -\\
\omega^{-1} &\text{ if } \epsilon_1 =- \text{ and } \epsilon_2 = + .
\end{aligned} 
\right.
$$

\item in the case of Figure~{\upshape \ref{fig:BiangleOneArc}(c)}, 
$$\quad \quad \quad \quad
\Tr_B^\omega\bigl([K, s]\bigr) = 
\left\{  
\begin{aligned}
0 &\text{ if } \epsilon_1 = \epsilon_2\\
 \omega &\text { if } \epsilon_1 =+ \text{ and } \epsilon_2 = -\\
-\omega^{5} &\text{ if } \epsilon_1 =- \text{ and } \epsilon_2 = + .
\end{aligned} 
\right.
$$
\vskip -\baselineskip
\vskip -\belowdisplayskip
\qed
\end{enumerate}

\end{enumerate}
\end{prop}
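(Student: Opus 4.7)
The plan is to build $\Tr_B^\omega$ as a biangle analogue of the quantum trace, essentially realizing the $R$-matrix for the fundamental representation of $\mathrm U_q(\mathfrak{sl}_2)$, and then verify its properties by a finite set of explicit checks. I will first establish uniqueness, then existence, relying heavily on the fact that a biangle has no ``interior'' edges, so every combinatorial piece is either a trivial arc or a single crossing.

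For uniqueness, I would observe that any stated framed link $[K,s]$ in $B\times[0,1]$ can be cut by finitely many properly embedded vertical arcs $\{0\}\leq\{1\}\times\{t_i\}$ into sub-biangles, each containing only one strand that projects either to a monotone arc (Figure~\ref{fig:BiangleOneArc}(a)) or to a single crossing of type (b) or (c). Summing over all assignments of signs compatible with $s$ to the newly created boundary points, the State Sum Property and the Elementary Cases then determine $\Tr_B^\omega\bigl([K,s]\bigr)$ completely.

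For existence, I would take such a decomposition as the \emph{definition} of $\Tr_B^\omega\bigl([K,s]\bigr)$, and verify successively: (i) independence of the choice of decomposition; (ii) invariance under boundary-preserving isotopy of $K$ (through framed links respecting the elevations of boundary points); (iii) compatibility with the Kauffman skein relation; (iv) multiplicativity under superposition, so that the resulting map is an algebra homomorphism $\SSSSS(B)\to\C$. Step (i) reduces to sliding a vertical cut past a monotone arc, which leaves the state sum unchanged because Elementary Case (a) forces the two boundary signs of the arc to agree. Step (iv) follows immediately from Step (i) applied to the horizontal gluing of two copies of~$B$.

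Step (ii) comes down to the usual framed Reidemeister moves II and III, together with the framing ``kink'' move. Reidemeister II asserts that the $4\times 4$ matrices encoded by cases (b) and (c) are mutual inverses, which is a direct check on their four nonzero entries; the computation forces exactly the normalization constants $-\omega^{-5}$, $\omega^{-1}$, $\omega$, $-\omega^5$ appearing in the statement. Reidemeister III becomes the Yang--Baxter equation on $8\times 8$ matrices, a finite but lengthy verification. Step (iii) reduces to checking that the ``crossing'' matrix of case (b) equals $A^{-1}$ times the identity (case (a)) plus $A$ times the ``cap--cup'' tensor built from two nested arcs; using $A=\omega^{-2}$, this is an entry-by-entry comparison involving the four possible sign configurations on the endpoints.

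The main obstacle is managing (ii) and (iii): while each identity is elementary, organizing the matrix bookkeeping so that Reidemeister III and the skein relation both fall out of the same $R$-matrix is the real content. Once the entries in cases (b) and (c) are recognized as (up to normalization by powers of $\omega$) the standard $\mathrm U_q(\mathfrak{sl}_2)$ braiding on its two-dimensional irreducible representation, the identities are dictated by the classical Yang--Baxter relation, but a direct verification is still the bulk of the work, and is where the specific powers of $\omega$ in the elementary cases are ultimately forced.
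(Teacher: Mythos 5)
You should first note that the paper itself offers no proof of this statement: it is imported verbatim from \cite{BonWon1} (Proposition 13 and Lemma 14 there), which is why the result carries a \emph{qed} in its statement; the construction there is indeed a state-sum version of the Kauffman bracket for tangles, so your general strategy (uniqueness by cutting the biangle into elementary pieces, existence by building the tangle invariant and checking invariance) points in a reasonable direction.

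There is, however, a concrete misreading that derails your key steps. The Elementary Cases (b) and (c) are \emph{not} crossings: the statement requires $K$ to project to a single arc \emph{embedded} in $B$, and in Figures~\ref{fig:BiangleOneArc}(b) and (c) the two endpoints of this arc lie on the \emph{same} boundary edge of the biangle (the left edge in (b), the right edge in (c)), at different elevations. These are the U-turn (cap/cup) coefficients, not an $R$-matrix; this is exactly how they are used later in the paper, e.g.\ in the proof of Proposition~\ref{prop:QTraceBiangleJW}, where a U-turn glued along $e_1$ is evaluated with the values of case (c). As a consequence: (i) your uniqueness argument, cutting so that each piece contains ``a monotone arc or a single crossing of type (b) or (c)'', invokes elementary values that the statement simply does not provide for crossings — crossings are instead handled automatically because $\Tr_B^\omega$ is defined on the stated skein algebra $\SSSSS(B)$, where the Kauffman relation already holds, and uniqueness should proceed by reducing to crossingless tangles (through-strands, U-turns, loops); moreover vertical cuts do not isolate single strands when arcs are nested or parallel, so you also need the algebra-homomorphism (superposition) property and a more careful induction there; (ii) the identities you propose to verify are not the right ones: ``the $4\times4$ matrices of (b) and (c) are mutual inverses'' is not Reidemeister II for these data, and ``the crossing matrix of case (b) equals $A^{-1}$ times the identity plus $A$ times the cap--cup tensor'' conflates a U-turn with a braiding. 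In the actual construction the braiding is \emph{defined} from (a), (b), (c) via Kauffman's formula, and what must be checked are the zig-zag (cap--cup) identities, the loop value $-A^{2}-A^{-2}$, compatibility with the elevation conventions and with superposition, and then Reidemeister II/III for the braiding so defined. Your outline could be repaired along these lines — at which point it becomes essentially the construction of \cite{BonWon1} — but as written the central verifications are aimed at the wrong elementary pieces, so the argument does not go through.
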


Note that Figure~\ref{fig:BiangleOneArc} uses the picture conventions indicated at the end of  {\upshape \S\ref{sect:QTrace}}. In particular, framings are everywhere vertical and, in Figures~\ref{fig:BiangleOneArc}(b) and (c), the end point labelled by $\epsilon_1$ sits at a higher elevation than the point labelled by $\epsilon_2$. 

The quantum trace $\Tr_B^\omega$ of Proposition~\ref{prop:BiangleQTrace} is just a version of the classical Kauffman bracket for tangles.

The relationship between the quantum traces of Proposition~\ref{prop:BiangleQTrace} and Theorem~\ref{thm:QTrace} is the following.  If we glue one side of a biangle $B_1$ to one side of a triangle $T_2$, the resulting surface is a triangle $T$. Each side of $T$ is naturally associated to a side of $T_2$, so that there is a natural isomorphism $ \mathcal T^\omega(T) \cong \mathcal T^\omega(T_2)$ between their associated Chekhov-Fock algebras. 

The following is an immediate consequence of the construction of the quantum trace homomorphism  in \cite[\S 6]{BonWon1} (which uses biangles as well as triangles). 

\begin{prop}
\label{prop:BiangleTriangleStateSum}
 If the triangle $T$ is obtained by gluing a biangle $B_1$ to a triangle $T_2$,  and if $[K_1] \in  \mathcal S^A (B_1)$ and $[K_2]\in \mathcal S^A (T_2)$ are glued together to give $[K] \in  \mathcal S^A (T)$, then for $A=\omega^{-2}$
$$
 \Tr_T^\omega \bigl([K,s]\bigr) = \kern -10pt\sum_{\text{compatible }s_1, s_2}\kern -10pt \Tr_{B_1}^\omega\bigl([K_1, s_1]\bigr) \Tr_{T_2}^\omega\bigl([K_2, s_2]\bigr) \in  \mathcal T^\omega(T_2) \cong  \mathcal T^\omega(T),$$
where the sum is over all states $s_1\colon \partial K_1 \to \{+,-\}$ and $s_2\colon \partial K_2 \to \{+,-\}$ that are compatible with $s\colon \partial K \to \{+,-\}$ and with each other.  \qed
\end{prop}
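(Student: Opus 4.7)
The plan is to read the identity directly off the construction of the quantum trace $\Tr_T^\omega$ on the triangle $T$ given in \cite{BonWon1}, \S 6. Recall that in that construction, one chooses an auxiliary properly embedded arc in $T$ that cuts $T$ into a biangle $B'_1$ and a smaller triangle $T'_2$, and defines $\Tr_T^\omega$ by a state sum that combines the biangle quantum trace of Proposition~\ref{prop:BiangleQTrace} with the Elementary Cases of Theorem~\ref{thm:QTrace} on $T'_2$. The main technical content of \S 6 of \cite{BonWon1} is the verification that the resulting map is independent of the chosen cut. Granting this, the proposition becomes essentially tautological: apply the construction to the specific cut given by the gluing seam $e = \partial B_1 \cap \partial T_2$.

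Concretely, the steps I would carry out are the following. First, for a stated skein $[K,s] \in \SSSSS(T)$, choose a generic representative $K \subset T\times[0,1]$ so that $K$ meets $e\times[0,1]$ transversely in finitely many points with pairwise distinct elevations; this is always achievable by a small isotopy of framed links respecting the conventions imposed in \S\ref{sect:QTrace}. Second, split $K$ accordingly as $K = K_1 \cup K_2$ with $K_1 = K \cap (B_1\times[0,1])$ and $K_2 = K\cap(T_2\times[0,1])$, both of which are framed tangles whose boundary framings are vertical and whose boundary points are at distinct elevations, as required to represent elements of $\mathcal S^A(B_1)$ and $\mathcal S^A(T_2)$. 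Third, observe that a state $s\colon\partial K \to \{+,-\}$ together with a state assignment on the shared boundary points $\partial K_1 \cap (e\times[0,1]) = \partial K_2 \cap (e\times[0,1])$ is exactly the data of a pair $(s_1,s_2)$ of compatible states on $\partial K_1$ and $\partial K_2$. Fourth, invoke the construction of \cite{BonWon1}, \S 6 applied to the cut $e$: by definition of $\Tr_T^\omega$ with respect to this cut, one has
$$
\Tr_T^\omega\bigl([K,s]\bigr) = \sum_{\text{compatible }s_1,s_2} \Tr_{B_1}^\omega\bigl([K_1,s_1]\bigr)\,\Tr_{T_2}^\omega\bigl([K_2,s_2]\bigr),
$$
where we have identified $\mathcal T^\omega(T)\cong\mathcal T^\omega(T_2)$ via the natural bijection between their edges. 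This is the statement of the proposition.

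The main obstacle is not really in the gluing argument itself but upstream: it is precisely the well-definedness of $\Tr_T^\omega$, i.e.\ its independence of the auxiliary separating arc used to cut $T$ into a biangle and a smaller triangle. That verification is carried out in \cite{BonWon1}, \S 6 and is invoked here as a black box. The only subtle point to check on our side is that the gluing is compatible with the elevation ordering of boundary points and with the vertical framing convention at the boundary; but this is automatic, since for a generic $K$ the boundary points on $e\times[0,1]$ have distinct elevations and the framings are vertical on both sides of the cut. Hence, granting the construction of \cite{BonWon1}, \S 6, the proposition follows with no further work.
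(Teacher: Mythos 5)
Your argument is correct and matches the paper's treatment: the paper likewise disposes of Proposition~\ref{prop:BiangleTriangleStateSum} by observing that it is an immediate consequence of the construction of the quantum trace in \cite[\S 6]{BonWon1}, which already uses biangles alongside triangles and whose independence of the chosen splitting is established there. Your additional checks on elevations, vertical framings, and the identification of compatible state pairs are exactly the routine verifications implicit in the paper's one-line justification.
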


Note that, because a triangle $T$ has a unique triangulation $\lambda$, we write  $\Tr_T^\omega$ and $ \mathcal T^\omega(T)$ instead of  $\Tr_\lambda^\omega$ and $ \mathcal T^\omega(\lambda)$, by a slight abuse of notation. 

\subsection{Jones-Wenzl idempotents}

The \emph{Jones-Wenzl idempotents} are special elements  $JW_n\in\mathcal S^A(B)$ of the skein algebra of the biangle $B=[0,1]\times \R$. The $n$--th Jones Wenzl idempotent $JW_n$ is a linear combination of skeins, each of which meets each of the two components of $\partial B \times [0,1]$ in $n$ points. 

It is convenient (and traditional) to represent $JW_n$ by a box with $n$ strands emerging on each side, as on the left-hand side of Figure~\ref{fig:JWRecRel}. In this case, it is important to remember that the box does not represent a single link, but a linear combination of links. When several boxes appear on a picture, multiple sums need to be taken. 

\begin{figure}[htbp]

\SetLabels
\E( .13 * .5) \small $n $ \\
(.48 *.41 ) \small$n\kern -2pt -\kern -2pt 1 $ \\
( .819*.41 ) \small $n\kern -2pt -\kern -3pt 1 $ \\
(.915 *.41 ) \small $n\kern -2pt -\kern -3pt 1 $ \\
\E( 0*.48 ) \scalebox{.8}{$n\kern -2pt\left\{ \raisebox{0pt}[10pt]{}\right. $} \\
\E( .26*.48 )\scalebox{.8}{$\left. \raisebox{0pt}[10pt]{}\right\}\kern -2pt n $}  \\
\E( .06 * .495) $\dots$\\
\E( .21 * .495) $\dots$\\
\E( .405 * .495) $\dots$\\
\E( .555 * .495) $\dots$\\
\E( .773 * .495) \scalebox{.6}{$\dots$}\\
\E( .966 * .495) \scalebox{.6}{$\dots$}\\
(.31 * .42) $=$\\
(.67 * .42) $+ \frac{\QQInt{n-1}{A^2}}{\QQInt{n}{A^2}}$\\
\endSetLabels
\centerline{\AffixLabels{ \includegraphics{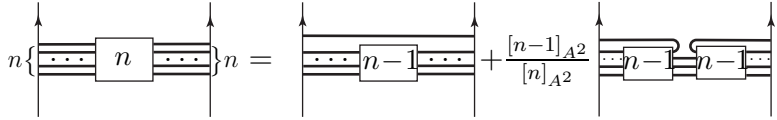} }}

\caption{The Jones-Wenzl recurrence relation}
\label{fig:JWRecRel}
\end{figure}

The Jones-Wenzl idempotents are then inductively defined by the property that $JW_1$ is the skein consisting of a single unknotted arc with vertical framing, as in Figure~\ref{fig:BiangleOneArc}(a), and by the recurrence relation indicated in Figure~\ref{fig:JWRecRel}. In particular, $JW_n$ is defined as long as $\QQInt{k}{A^2} \not=0$, or equivalently $A^{4k}\not=1$, for every $k\leq n$. 

The book chapter \cite[Chap.~13]{Lick} or the survey article \cite{Lick2} are convenient references for Jones-Wenzl idempotents. We will use the relations in $\mathcal S^A(B)$ summarized in Figures~\ref{fig:JWidemProp}, \ref{fig:UturnJW} and \ref{fig:JWloop}.

\begin{figure}[htbp]

\SetLabels
\E(.16 *.5 ) $ n$ \\
\E( .35* .5) $ n$ \\
\E( .81*.5 ) $ n$ \\
\E(.565 * .48) $= $ \\
\E( .065*.495) $\dots$\\
\E( .255*.495) $\dots$\\
\E( .445*.495) $\dots$\\
\E( .7*.495) $\dots$\\
\E( .925*.495) $\dots$\\
\endSetLabels
\centerline{\AffixLabels{ \includegraphics{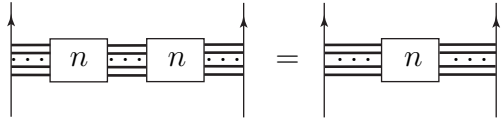} }}

\caption{The Jones-Wenzl idempotent property}
\label{fig:JWidemProp}
\end{figure}

\begin{figure}[htbp]

\SetLabels
\E(.145 * .5) $ n$ \\
\E (.39 * .5) $ =$  \\
( .23*.635 ) $ \dots$ \\
(.23 *.33 ) $\dots $ \\
( .065*.58 ) $ \dots$ \\
(.065 *.38 ) $\dots $ \\
\E(.63 * .5) $ n$ \\
( .556*.632 ) $ \dots$ \\
(.556 *.328 ) $\dots $ \\
( .72*.58 ) $ \dots$ \\
(.72 *.38 ) $\dots $ \\
\E (.85*.5) $=\kern 5pt0$\\
\endSetLabels
\centerline{\AffixLabels{ \includegraphics{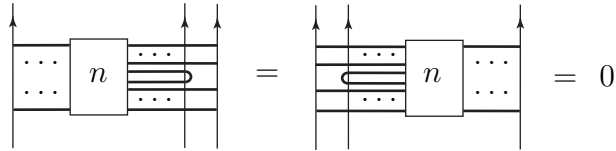} \hskip 2cm}}
\caption{Gluing a U-turn to a Jones-Wenzl idempotent}
\label{fig:UturnJW}

\end{figure}

\begin{figure}[htbp]

\SetLabels
\E( .18* .5) $n $ \\
( .82*.41 ) $ n\kern -2pt - \kern -2pt1$ \\
\E( .5*.5 ) $\quad = -\frac {\QQInt{n+1}{A^2}}{\QQInt{n}{A^2}}$ \\
( .085*.48 ) $\dots $ \\
( .29*.48 ) $\dots $ \\
( .72*.487 ) $\dots $ \\
( .93*.487 ) $\dots $ \\
\endSetLabels
\centerline{\AffixLabels{ \includegraphics{JWloop.eps} }}

\caption{}
\label{fig:JWloop}
\end{figure}

Note that the Jones-Wenzl idempotents $JW_n\in\mathcal S^A(B)$ are idempotent for the gluing property of Figure~\ref{fig:JWidemProp}, and not for the multiplication of $\mathcal S^A(B)$. See \cite[\S3.5]{CFS} for a more conceptual interpretation of this idempotent property, in terms of irreducible components of tensor products of the fundamental representation of the quantum group $\mathrm U_q(\mathfrak{sl}_2)$.

What makes Jones-Wenzl idempotents convenient for us is the following property. If we glue the two boundary components of the biangle $B$ together, we obtain a cylinder $C$.

\begin{lem}
\label{lem:ChebJW2}
Let the cylinder $C\cong S^1 \times \R$ be obtained by gluing together the two sides of the biangle $B=[0,1]\times \R$. Then, the element of $\mathcal S^A(C)$ obtained by applying this gluing to the Jones-Wenzl idempotent $JW_n \in \mathcal S^A(B)$ is equal to the evaluation $S_n\bigl([K]\bigr)$ of the Chebyshev polynomial $S_n$ at the skein  $[K]\in \mathcal S^A(C)$ represented by an embedded loop $K\subset S\times\{\frac12\}$ going around the cylinder, endowed with the vertical framing. 
\end{lem}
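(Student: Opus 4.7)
The plan is to argue by induction on $n$, using the Jones--Wenzl recurrence of Figure~\ref{fig:JWRecRel} on the algebraic side and the loop-absorption identity of Figure~\ref{fig:JWloop} to collapse the term that closure creates. Let $\tau\colon \mathcal S^A(B) \to \mathcal S^A(C)$ denote the linear map induced by gluing the two sides of the biangle $B$ to form the cylinder $C$; we want to show $\tau(JW_n) = S_n([K])$.

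The base cases are immediate from the definitions: $JW_0$ is the empty skein, whose closure is $[\varnothing] = 1 = S_0\bigl([K]\bigr)$, while $JW_1$ is a single vertical arc, whose closure is precisely the core loop $[K] = S_1\bigl([K]\bigr)$.

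For the inductive step, apply the Jones--Wenzl recurrence to write $JW_n$ as the sum of two terms in $\mathcal S^A(B)$: the first is $JW_{n-1}$ on $n-1$ strands with one additional straight strand alongside, and the second is the coefficient $\frac{\QQInt{n-1}{A^2}}{\QQInt{n}{A^2}}$ times the skein in which two copies of $JW_{n-1}$ are separated by a cup--cap on the last two strands. Closing up the first term produces the superposition of $\tau(JW_{n-1})$ with one parallel copy of the core loop, and by induction this equals $[K]\,S_{n-1}\bigl([K]\bigr)$; note that the factors commute in $\mathcal S^A(C)$ because parallel copies of $[K]$ at different elevations are isotopic regardless of order. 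Closing up the second term in the cylinder causes the cup and cap to merge into a single loop that encircles $JW_{n-1}$; applying Figure~\ref{fig:JWloop} to $JW_{n-1}$ absorbs that loop at the cost of the factor $-\frac{\QQInt{n}{A^2}}{\QQInt{n-1}{A^2}}$ and reduces the content of the box to $JW_{n-2}$. By the induction hypothesis, the closure of what remains is $S_{n-2}\bigl([K]\bigr)$, so the second term contributes
$$
\frac{\QQInt{n-1}{A^2}}{\QQInt{n}{A^2}}\cdot \left(-\frac{\QQInt{n}{A^2}}{\QQInt{n-1}{A^2}}\right) S_{n-2}\bigl([K]\bigr) = -S_{n-2}\bigl([K]\bigr).
$$
Adding the two contributions and using the Chebyshev recurrence $S_n(x) = x\,S_{n-1}(x) - S_{n-2}(x)$ yields $\tau(JW_n) = [K]\,S_{n-1}\bigl([K]\bigr) - S_{n-2}\bigl([K]\bigr) = S_n\bigl([K]\bigr)$, completing the induction.

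The main obstacle is verifying that the second term in Figure~\ref{fig:JWRecRel}, after closure in the cylinder, really is a loop encircling $JW_{n-1}$ to which Figure~\ref{fig:JWloop} applies cleanly. Concretely, one has to check that the cup on one side of the biangle and the cap on the other glue, through the identification of the two boundary components of $B$, into a single unknotted loop that passes once around the box and that the idempotent property of Figure~\ref{fig:JWidemProp} lets one merge the two adjacent $JW_{n-1}$'s into one before applying Figure~\ref{fig:JWloop}. Once this is pictured carefully, the algebraic identification with the Chebyshev recurrence falls out as above.
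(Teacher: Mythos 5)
Your proposal is correct and is essentially the argument the paper is relying on: the paper does not prove Lemma~\ref{lem:ChebJW2} itself but refers to \cite[\S 13]{Lick} and \cite[p.~715]{Lick2}, and the proof given there is exactly this induction, combining the recurrence of Figure~\ref{fig:JWRecRel} with the strand-closure identity of Figure~\ref{fig:JWloop}, the coefficient $\frac{\QQInt{n-1}{A^2}}{\QQInt{n}{A^2}}$ cancelling against $-\frac{\QQInt{n}{A^2}}{\QQInt{n-1}{A^2}}$ precisely as you compute. The one point to phrase more carefully is the closure of the second term: the cup and cap do not meet directly across the glued boundary (they lie in the interior of $B$); rather, after merging the two $JW_{n-1}$ boxes via Figure~\ref{fig:JWidemProp}, the turnback together with the top strand --- which crosses the gluing circle twice with zero net winding --- is isotopic to the partial closure of one strand of the merged box, which is exactly the configuration of Figure~\ref{fig:JWloop}, so the reduction to $JW_{n-2}$ goes through as you claim.
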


\begin{proof}
See for instance \cite[\S 13]{Lick} or \cite[p.~715]{Lick2}. 
\end{proof} 

We will use Lemma~\ref{lem:ChebJW2} to compute the quantum traces $\Tr_\lambda^\omega \bigl( [K^{S_N}]\bigr)$ and $ \Tr_\lambda^\omega \bigl( [K^{T_N}]\bigr)$ of the elements $[K^{S_N}]$, $[K^{T_N}] \in \SSS$ respectively obtained by threading the Chebyshev polynomials $S_N$  and $T_N$ along a framed link $K$. A particularly useful feature is the idempotent property of Figure~\ref{fig:JWidemProp}, which will enable us to localize the computations. 

Jones-Wenzl idempotents however have the drawback that not all of them are defined when $A$ is a root of unity (and in particular that $JW_N$ is undefined when $A^{4N}=1$, which is precisely that case we are interested in). Consequently, we will temporary assume that $A$ is \emph{generic}, namely is not a root of unity, to ensure that all Jones-Wenzl idempotents are defined. We will then let $A$ tend to an appropriate root of unity, and analyze what happens in the limit to complete our computation.

\subsection{Quantum traces of Jones-Wenzl idempotents in the biangle}

As indicated above, we assume for a while that $A$ is generic. 
Let us fix an integer $N\geq 1$.

Let $s$ be a state for the Jones-Wenzl idempotent $JW_N \in \mathcal S^A(B)$. This means that we are assigning the same state $s$ to each of the links that appear in the expression of $JW_N$. It is here important that all these links have the same  boundary, which we denote by $\partial JW_N$. 

Let $e_1$ and $e_2$ be the two boundary components of the biangle $B$, oriented in a parallel way and so that $e_1$ is the component on the left. 
The state $s$ for $JW_N$ consists of a state $s_1$ on $e_1\cap \partial JW_N$ and a state $s_2$ on $e_2\cap \partial JW_N$ (where $e_i\cap \partial JW_N$ is  shorthand notation for $(e_1\times [0,1])\cap \partial JW_N$). By convention, we order the  points of $e_i\cap \partial JW_N$ from bottom to top, in increasing order vertically upwards along the $[0,1]$--direction. As in \S \ref{sect:Qnumbers}, let  $|s_i|$ and $\iota(s_i)$ respectively denote the number of $+$ signs and the number of inversions of $s_i$. 

\begin{prop}
\label{prop:QTraceBiangleJW}
For a generic $A\in \C-\{0\}$, 
\begin{align*}
\Tr_B(JW_N, s) =0 \phantom{\frac{A^{2\iota(s_1)}A^{2\iota(s_2)}}{}} \quad&\text{if } |s_1| \not= |s_2|\text{, and}\\
\Tr_B(JW_N, s) = \frac
{A^{2\iota(s_1)}A^{2\iota(s_2)}}
{\QBinom{N}{p}{A^4}} \phantom{0}\quad
&\text{if } |s_1|=|s_2|=p.
\end{align*}
\end{prop}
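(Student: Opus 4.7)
The strategy is to handle the vanishing claim and the explicit value separately.

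For the vanishing when $|s_1| \ne |s_2|$, I would expand $JW_N \in \mathcal{S}^A(B)$ as a linear combination of Temperley--Lieb tangles in $B$; up to scalar factors produced by removing closed loops, each such tangle is a non-crossing perfect matching of the $2N$ boundary endpoints. A given matching pairs some number $k$ of left endpoints among themselves (``cups'' on the left), $k$ right endpoints among themselves (``cups'' on the right), and joins the remaining $N - 2k$ endpoints on each side to each other through through-strands. By Proposition~\ref{prop:BiangleQTrace}(2), a through-strand contributes a nonzero value to the state sum only when its two endpoints carry equal signs, whereas a cup contributes nonzero only when its two endpoints carry opposite signs. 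Consequently, any contributing diagram forces $|s_1| - k$ (the number of $+$ signs on the left through-endpoints) to equal $|s_2| - k$ (the corresponding number on the right), and hence $|s_1| = |s_2|$.

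For the explicit value when $|s_1| = |s_2| = p$, I would proceed by induction on $N$ using the Jones--Wenzl recurrence of Figure~\ref{fig:JWRecRel}. The base case $N = 1$ is immediate from case~(a) of Proposition~\ref{prop:BiangleQTrace}. For the inductive step, write
$$
JW_N = Y + \frac{\QQInt{N-1}{A^2}}{\QQInt{N}{A^2}}\, Z,
$$
where $Y$ consists of $JW_{N-1}$ acting on the first $N-1$ strands together with one additional straight strand, and $Z$ is a similar configuration with a cup-cap U-turn on the last two strands. The state sum property of Proposition~\ref{prop:BiangleQTrace}(1) lets us factor each contribution: $\Tr_B(Y, s)$ equals $\Tr_B(JW_{N-1}, s_1|_{1,\dots,N-1}, s_2|_{1,\dots,N-1})$ multiplied by a Kronecker delta forcing $s_1(N) = s_2(N)$, and $\Tr_B(Z, s)$ factors analogously, with the U-turn contributing an explicit factor from Proposition~\ref{prop:BiangleQTrace}(2) (nonzero only when the two U-turn endpoints carry opposite signs). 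Apply the induction hypothesis to each remaining $JW_{N-1}$.

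The main obstacle is to verify that the two contributions reassemble into the claimed formula. This ultimately reduces to the Pascal-type identity
$$
\QBinom{N}{p}{A^4} = \QBinom{N-1}{p-1}{A^4} + A^{4p}\, \QBinom{N-1}{p}{A^4},
$$
together with the elementary rule for how the inversion count changes when one removes the top endpoint of a state: $\iota(s_i) = \iota(s_i|_{1,\dots,N-1})$ if $s_i(N) = +$, and $\iota(s_i) = \iota(s_i|_{1,\dots,N-1}) + |s_i|$ if $s_i(N) = -$. The bulk of the work then lies in careful bookkeeping of the powers of $A$ arising from the inversion counts, the U-turn values of Proposition~\ref{prop:BiangleQTrace}(2) (which involve $\omega = A^{-1/2}$), and the prefactor $\QQInt{N-1}{A^2}/\QQInt{N}{A^2}$, all of which must combine to produce the desired quotient of quantum binomial coefficients.
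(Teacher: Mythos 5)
Your treatment of the vanishing statement when $|s_1|\neq|s_2|$ is fine (the paper itself just quotes \cite[Lemma~21]{BonWon1} for this), and your base case, your $q$--Pascal identity, and your rule for how $\iota(s_i)$ changes upon deleting the top endpoint are all correct. The gap is in the inductive step for the actual value. In the recurrence of Figure~\ref{fig:JWRecRel} the second term is not ``$JW_{N-1}$ together with a cup--cap U-turn on the last two strands'': it is $(JW_{N-1}\otimes 1)\,e_{N-1}\,(JW_{N-1}\otimes 1)$, with \emph{two} copies of $JW_{N-1}$ separated by the turnback (a single-box version of the recurrence is already false for $N=3$: the expansion of $JW_3$ contains both $e_1e_2$ and $e_2e_1$, which no expression $(JW_2\otimes 1)+c\,(JW_2\otimes 1)e_2$ can produce). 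Consequently $\Tr_B(Z,s)$ does \emph{not} ``factor analogously'': by the State Sum Property of Proposition~\ref{prop:BiangleQTrace} you must sum over all states on the two internal edges separating the three pieces, and after inserting the induction hypothesis for each box and summing the inversion weights on the $N-2$ through-strands via Lemma~\ref{lem:QBinomInversions}, this term contributes an expression involving a factor of type $\QBinom{N-2}{\cdot}{A^4}$ divided by \emph{two} factors of type $\QBinom{N-1}{\cdot}{A^4}$, weighted by the turnback values of Proposition~\ref{prop:BiangleQTrace}(2)(b,c) at the four U-turn endpoints. The identity that the induction then requires is substantially more than the Pascal identity you quote, and nothing in your proposal verifies it; as written, this step would fail (and carrying arbitrary inversion counts through the two-box term only makes the bookkeeping worse).

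For comparison, the paper takes a route that avoids this entirely, and you may want to borrow its two devices. First, it strips off inversions at the outset: gluing a U-turn onto $JW_N$ gives $0$ (Figure~\ref{fig:UturnJW}), and combining this with the State Sum Property shows that exchanging two adjacent boundary signs multiplies the trace by $A^{\pm2}$, which produces the prefactor $A^{2\iota(s_1)}A^{2\iota(s_2)}$ and reduces everything to inversion-free states. Second, for the inversion-free value $C(N,p)$ it inducts on the number $N-p$ of minus signs using the partial-closure relation of Figure~\ref{fig:JWloop}, in whose state sum only two terms survive; this yields a clean two-term recursion relating $C(N-1,p)$, $C(N,p+1)$ and $C(N,p)$ that is solved directly (the Wenzl recurrence of Figure~\ref{fig:JWRecRel} is used only to get the base case $C(N,N)=1$). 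If you wish to keep your induction on $N$, you must use the correct two-box form of the recurrence, perform the intermediate-state sum for the middle turnback explicitly, and verify the resulting multi-binomial identity — or first perform the inversion-removal step above so that at least the boundary states are standard before you attempt that sum.
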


\begin{proof} The first property is an easy consequence of the definition of the quantum trace of Proposition~\ref{prop:BiangleQTrace}; see \cite[Lemma~21]{BonWon1}. 

For the second property, we first reduce our computation to the case where there is no inversion.

Suppose that $s_1$ has at least one inversion, corresponding to two points $x$, $x'\in e_1\cap \partial JW_N$ such that $x<x'$ for the ordering of $e_1\cap \partial JW_N$,  $s(x)=+$ and $s(x')=-$. Without loss of generality, we can assume that $x$ and $x'$ are next to each other for the order on  $e_1\cap \partial JW_N$.  Let the state $s'$ be obtained from $s$ by exchanging $s(x)$ and $s(x')$. In particular, $\iota(s'_1) = \iota(s_1)-1$. 

Glue to $B$ another biangle $B_1$  along $e_1$, and glue to $[JW_N]\in \mathcal S^\omega(B)$ along the points $x$ and $x'$ a U-turn $K_1$ as in Figure~\ref{fig:UturnJW}. By the identity described by Figure~\ref{fig:UturnJW},  the resulting element of $ \mathcal S^A (B\cup B_1)$ is trivial. Combining the State Sum Property of Proposition~\ref{prop:BiangleQTrace} with the values of the quantum trace for the skein of Figure~\ref{fig:BiangleOneArc}(c), 
$$0 = \omega \,\Tr_B(JW_N, s') - \omega^5 \,\Tr_B(JW_N, s) $$
and therefore 
$
\Tr_B(JW_N, s)= A^2 \,\Tr_B(JW_N, s')
$
since $A=\omega^{-2}$. 

The same formula holds when $s'$ is obtained from $s$ by removing an inversion of consecutive points in $e_2$. This proves that 
$$
\Tr_B(JW_N, s) = {A^{2\iota(s_1)}A^{2\iota(s_2)}}\Tr_B(JW_N, s_0)
$$
where $s_0$ is the state that has no inversion, and has $p$ signs $+$ on $e_1\cap \partial JW_N$ and on $e_2\cap \partial JW_N$.

It therefore suffices to show that  $C(N,p) = \Tr_B(JW_N, s_0)$ is equal to $\QBinom{N}{p}{A^4}^{-1}$. 
We will prove this by induction on the number  $N-p$ of minus signs in the states $s_1$ and $s_2$.

Using the recurrence relation of Figure~\ref{fig:JWRecRel} to expand  $[JW_N]\in \mathcal S^A(B)$ as a linear combination of skeins, the only skein in this expansion that does not contain a U-turn as in Figure~\ref{fig:BiangleOneArc}(b) is the one consisting of $N$ parallel strands as in Figure~\ref{fig:BiangleOneArc}(a), and the coefficient of this term in the expansion is equal to 1. It follows that $C(N,N)=1$, which proves the initial step of the induction, when $N-p=0$.

For the general case, suppose the property proved for every $N'$, $p'$ with $N'-p' < N-p$. 
Consider the identity in $\mathcal S^A(B)$ represented in Figure~\ref{fig:JWloop} (see \cite[p. 137]{Lick} or \cite[p.~714]{Lick2}  for a proof). Endow both sides with the state that has no inversion, and had $p$ signs $+$ on each side of $B$. If we use the State Sum Property to compute the quantum trace of the left hand side of the equation of Figure~\ref{fig:JWloop}, only two terms have non-trivial contributions,  and this relation  gives
$$
\QQInt{N+1}{A^2} C(N-1, p) = - \QQInt{N}{A^2} \left( (-\omega^5) C(N,p+1) \omega^{-1}  + \omega A^{2p}A^{2p} C(N,p)(-\omega^{-5})
\right).
$$

Remembering that $\QQInt{N}{A^2} = A^{-2(N-1)} \QInt{N}{A^4}$ and $A=\omega^{-2}$, and using the induction hypothesis,
\begin{align*}
C(N,p) &= A^{-4(p+1)} \frac{\QInt{N+1}{A^4}}{\QInt{N}{A^4}} C(N-1, p)
- A^{-4(p+1)} C(N, p+1) \\
&= A^{-4(p+1)} \frac{\QInt{N+1}{A^4}}{\QInt{N}{A^4}} 
\frac{\QInt{p}{A^4} \QInt{p-1}{A^4} \dots \QInt{1}{A^4}}
{\QInt{N-1}{A^4} \QInt{N-2}{A^4} \dots \QInt{N-p}{A^4}}\\
&\quad\quad\quad \quad \quad \quad \quad \quad\quad \quad \quad - A^{-4(p+1)}
\frac{\QInt{p+1}{A^4} \QInt{p}{A^4} \dots \QInt{1}{A^4}}
{\QInt{N}{A^4} \QInt{N-1}{A^4} \dots \QInt{N-p}{A^4}}\\
&= A^{-4(p+1)}
\frac{\QInt{N+1}{A^4} - \QInt{p+1}{A^4}}{\QInt{N-p}{A^4}}
\frac{\QInt{p}{A^4} \QInt{p-1}{A^4} \dots \QInt{1}{A^4}}
{\QInt{N}{A^4} \QInt{N-1}{A^4} \dots \QInt{N-p+1}{A^4}} \\
&= \QBinom{N}{p}{A^4}^{-1} .
\end{align*} 
The last step uses the property that ${\QInt{N+1}{A^4} - \QInt{p+1}{A^4}}=A^{4(p+1)} {\QInt{N-p}{A^4}}$, which is an immediate consequence of the definition of the quantum integers $\QInt{k}{A^4}$. 

This concludes the proof by induction that $ C(N,p) = \QBinom{N}{p}{A^4}^{-1}$, and therefore that 
$$
\Tr_B(JW_N, s) = {A^{2\iota(s_1)}A^{2\iota(s_2)}}\Tr_B(JW_N, s_0)
={A^{2\iota(s_1)}A^{2\iota(s_2)}}\QBinom{N}{p}{A^4}^{-1}
$$
when $|s_1|=|s_2|=p$. \end{proof}

\subsection{Quantum traces of Jones-Wenzl skeins in the triangle}

We now consider a Jones-Wenzl idempotent  $JW_N$ in a triangle $T$, as represented in Figure~{\upshape\ref{fig:TriangleJW}(a)}.

\begin{figure}[htbp]

\SetLabels
( .2* .44) \tiny $N $ \\
( .72*.56 ) \rotatebox{-30}{\tiny$ N$} \\
( .2* -.2) (a) \\
( .8* -.2) (b) \\
( .04*.3) $e_1$\\
( .36*.3) $e_2$\\
( .6*.3) $e_1$\\
( .96*.3) $e_2$\\
( .75*.2) $e_0$\\
\endSetLabels
\centerline{\AffixLabels{ \includegraphics{TriangleJW.eps}
 }}
\caption{}
\label{fig:TriangleJW}
\end{figure}

\begin{prop} 
\label{prop:QTraceTriangleJW}
In the triangle $T$, let $JW_N \in \mathcal S^A(T)$ be the Jones-Wenzl idempotent  represented in Figure~{\upshape\ref{fig:TriangleJW}(a)}. Let $e_1$ and $e_2$ be the two sides of $T$ indicated  in Figure~{\upshape\ref{fig:TriangleJW}(a)}, and let $Z_1$ and $Z_2$ denote the corresponding generators of $\mathcal T^\omega(T)$. Let $s$ be a state for $JW_N$, consisting of a state $s_1$ on $e_1\cap JW_N$ and a state $s_2$ on $e_2\cap JW_N$, let $\iota(s_1)$ and $\iota(s_2)$ be the respective numbers of inversions of $s_1$ and $s_2$, and set $p_1 = |s_1|$ and $p_2=|s_2|$. 

Then 
$\Tr_T^\omega\bigl([JW_N, s]\bigr)=0$ if $p_2>p_1$, and otherwise
\begin{align*}
\Tr_T^\omega\bigl([JW_N, s]\bigr) 
= 
{A^{2\iota(s_1)}  A^{2\iota(s_2)}} &
 \frac{\QInt{N-p_2}{A^4}! \QInt{p_1}{A^4}!}{\QInt{N}{A^4}! \QInt{p_1-p_2}{A^4}!}\\
  &\quad\quad\quad A^{-(p_1-p_2)(N-p_1+p_2)} [Z_1^{2p_1-N} Z_2^{2p_2-N}]
\end{align*}
where $[Z_1^{2p_1-N} Z_2^{2p_2-N}]$ denotes the Weyl quantum ordering for the monomial\linebreak $Z_1^{2p_1-N} Z_2^{2p_2-N}$, as defined in {\upshape\S \ref{sect:CheFock}}. 
\end{prop}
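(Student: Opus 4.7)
My plan is to isolate the Jones-Wenzl idempotent inside a sub-biangle of $T$ via the gluing decomposition suggested by Figure~\ref{fig:TriangleJW}(b), and to combine Proposition~\ref{prop:QTraceBiangleJW} for the biangle contribution with iterated use of the elementary triangle formula of Theorem~\ref{thm:QTrace} for the remaining non-crossing arcs.

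First I imitate the U-turn argument from the proof of Proposition~\ref{prop:QTraceBiangleJW}: by gluing auxiliary biangles along $e_1$ and $e_2$ carrying U-turns between consecutive endpoints of opposite sign and invoking the vanishing identity of Figure~\ref{fig:UturnJW} together with the elementary biangle values of Figure~\ref{fig:BiangleOneArc}, each local exchange of a consecutive $(+,-)$ pair for a $(-,+)$ pair costs a factor $A^2$. This reduces the problem to the sorted state $s_0 = (s_1^0, s_2^0)$ with $\iota(s_1^0) = \iota(s_2^0) = 0$ and accounts for the overall prefactor $A^{2\iota(s_1)+2\iota(s_2)}$. I then decompose $T$ along an internal edge $e_0$ (as in Figure~\ref{fig:TriangleJW}(b)) into a biangle $B$ containing $JW_N$ and an auxiliary smaller triangle $T_0$ whose skein $K_0$ is a fan of $N$ parallel non-crossing arcs with vertical framing running from $e_0$ to $e_2$. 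Proposition~\ref{prop:BiangleTriangleStateSum} gives
$$\Tr_T^\omega\bigl([JW_N, s_0]\bigr) = \sum_{s'} \Tr_B^\omega\bigl([JW_N, s_1^0, s']\bigr) \, \Tr_{T_0}^\omega\bigl([K_0, s', s_2^0]\bigr),$$
where $s'$ ranges over states on the $N$ glued boundary points.

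Proposition~\ref{prop:QTraceBiangleJW} shows that the biangle factor vanishes unless $|s'|=p_1$ and otherwise equals $A^{2\iota(s')}/\QBinom{N}{p_1}{A^4}$. Iterating Theorem~\ref{thm:QTrace}(2a) together with further state-sum decompositions of $T_0$ into single-arc pieces, any arc of $K_0$ with end-signs $(-,+)$ annihilates the triangle factor, while each remaining arc contributes a Weyl-ordered monomial $[Z_0^{\epsilon'}Z_2^{\epsilon_2}]$; commuting and assembling the $N$ monomials into a single global Weyl-ordered monomial $[Z_0^{2p_1-N} Z_2^{2p_2-N}]$ produces an additional state-dependent power of $A$. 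The vanishing for $p_2 > p_1$ now follows because non-zero contributions force the top $p_2$ slots of $s'$ (those matching the $+$ entries of the sorted $s_2^0$) to all be $+$, incompatible with $|s'|=p_1<p_2$. For $p_2 \leq p_1$ the remaining $s'$ are parametrized by selecting $p_1-p_2$ extra $+$ signs among the bottom $N-p_2$ positions, and one checks that $\iota(s')$ equals the inversion number of the restriction of $s'$ to those bottom positions. The Weyl reordering of the triangle monomials produces precisely a further factor $A^{2\iota(s')}$, which combines with the biangle's $A^{2\iota(s')}$ to give $A^{4\iota(s')}$; Lemma~\ref{lem:QBinomInversions} with $a=A^4$ then identifies
$$\sum_{s'} A^{4\iota(s')} = \QBinom{N-p_2}{p_1-p_2}{A^4}.$$
Dividing by $\QBinom{N}{p_1}{A^4}$ and rewriting as a ratio of quantum factorials yields the stated prefactor, and the Chekhov-Fock gluing isomorphism $\mathcal T^\omega(T_0)\cong\mathcal T^\omega(T)$ identifies $Z_0$ with $Z_1$, producing $[Z_1^{2p_1-N} Z_2^{2p_2-N}]$.

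The main obstacle is the delicate bookkeeping of powers of $A$ (equivalently $\omega$) throughout: the Weyl reordering of the product of $N$ arc monomials contributes a quadratic collection of skew-commutation factors whose $s'$-dependent part must match exactly the biangle inversion count, while the $s'$-independent residual must simplify to precisely $A^{-(p_1-p_2)(N-p_1+p_2)}$. Verifying this miraculous collapse of disparate $\omega$-contributions into a single clean scalar is the computational heart of the argument.
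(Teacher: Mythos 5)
Your proposal follows essentially the same route as the paper's proof: the U-turn trick to strip off the inversion factors $A^{2\iota(s_1)+2\iota(s_2)}$, the splitting of $T$ along an internal edge into a biangle containing $JW_N$ and a triangle containing $N$ parallel strands, the state sum of Proposition~\ref{prop:BiangleTriangleStateSum}, the biangle values from Proposition~\ref{prop:QTraceBiangleJW}, the annihilation of $(-,+)$ arcs forcing the top $p_2$ slots to be $+$ (hence vanishing for $p_2>p_1$), and the identification $\sum_{s'}A^{4\iota(s')}=\QBinom{N-p_2}{p_1-p_2}{A^4}$ via Lemma~\ref{lem:QBinomInversions}. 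The only part you leave as an assertion — that the sorted product of arc monomials regroups into $A^{-(p_1-p_2)(N-p_1+p_2)}[Z_1^{2p_1-N}Z_2^{2p_2-N}]$ — is exactly the ``final grouping of terms'' the paper also treats as a routine computation with the Weyl ordering, so the argument is correct as it stands.
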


\begin{proof} The same U-turn trick as in the proof of Proposition~\ref{prop:QTraceBiangleJW} reduces  the computation to the case where there are no inversions. Therefore, we henceforth assume that $s_1$ and $s_2$ have no inversion.

Split the triangle $T$ into a biangle $B_1$ and a triangle $T_2$ as in Figure~\ref{fig:TriangleJW}(b), and let $e_0 $ be their common edge $ B_1 \cap T_2$. In particular, the Jones-Wenzl idempotent $JW_N$ in $T$ splits into a Jones-Wenzl idempotent $JW_N^1$ in the biangle $B_1$ and into a family $K_2$ of parallel strands in the triangle $T_2$. Applying the State Sum Property of Proposition~\ref{prop:BiangleTriangleStateSum},
$$
\Tr_{T}^\omega (JW_N, s) = \sum_{s_0} \Tr_{B_1}^\omega (JW_N, s_1 \cup s_0) \Tr_{T_2}^\omega (K_2, s_2 \cup s_0)
$$ 
where the sum is over all states $s_0$ for $e_0 \cap JW_N$. 

By Proposition~\ref{prop:QTraceBiangleJW}, a state $s_0$ with a non-trivial contribution to the above sum is such that $|s_0|=p_1$, and in this case
$$
 \Tr_{B_1}^\omega (JW_N, s_1 \cup s_0) = A^{2\iota(s_0)} 
 \QBinom{N}{p_1}{A^4}^{-1}.
$$

We now consider the terms coming from the triangle $T_2$. Let $s_j(i)\in \{ -, +\}$ denote the sign assigned by the state $s_j$ to the $i$--th point of $e_j \cap JW_n$. In particular, since $s_2$ has no inversion, $s_2(i)=+$ if and only if $i>N- p_2$. Therefore, by  Case~2(a) of Theorem~\ref{thm:QTrace}, if $\Tr_{T_2}^\omega (K_2, s_2 \cup s_0) \not=0$ then necessarily $s_0(i)=+$ for every $i>N- p_2$. In addition,   identifying  the sign $\pm$ to the number $\pm1$ in the exponents, this contribution is then equal to
$$
\Tr_{T_2}^\omega (K_2, s_2 \cup s_0) =
[Z_1^{s_0(1)}Z_2^{-1}] [Z_1^{s_0(2)}Z_2^{-1}] \dots [Z_1^{s_0(N-p_2)}Z_2^{-1}] [Z_1Z_2]^{p_2}.
$$
 Using the property that
$$
[Z_1Z_2^{-1}][Z_1^{-1}Z_2^{-1}] = \omega^{-4} [Z_1^{-1}Z_2^{-1}] [Z_1Z_2^{-1}] = A^{2} [Z_1^{-1}Z_2^{-1}] [Z_1Z_2^{-1}],
$$
 the terms in this contribution can be reordered as 
$$
\Tr_{T_2}^\omega (K_2, s_2 \cup s_0) = A^{2\iota(s_0)} 
 [Z_1^{-1}Z_2^{-1}]^{N-|s_0|} [Z_1Z_2^{-1}]^{|s_0|-p_2}  [Z_1Z_2]^{p_2}
$$

Combining this with Lemma~\ref{lem:QBinomInversions}, we obtain
\begin{align*}
\Tr_{T}^\omega (JW_N, s) &=  \kern -10 pt \sum_{\tiny
\begin{matrix}
s_0(i)=+ \text{ if } i>N- p_2\\
|s_0| = p_1
\end{matrix}
} \kern - 15pt
A^{4\iota(s_0)} 
 \QBinom{N}{p_1}{A^4}^{-1}
  [Z_1^{-1}Z_2^{-1}]^{N-p_1} [Z_1Z_2^{-1}]^{p_1-p_2}  [Z_1Z_2]^{p_2}\\
 &=  \QBinom{N-p_2}{p_1-p_2}{A^4} \QBinom{N}{p_1}{A^4}^{-1}
  [Z_1^{-1}Z_2^{-1}]^{N-p_1} [Z_1Z_2^{-1}]^{p_1-p_2}  [Z_1Z_2]^{p_2}\\
 &= \frac{\QInt{N-p_2}{A^4}! \QInt{p_1}{A^4}!}{\QInt{N}{A^4}! \QInt{p_1-p_2}{A^4}!}\,
   [Z_1^{-1}Z_2^{-1}]^{N-p_1} [Z_1Z_2^{-1}]^{p_1-p_2}  [Z_1Z_2]^{p_2}\\
  &= \frac{\QInt{N-p_2}{A^4}! \QInt{p_1}{A^4}!}{\QInt{N}{A^4}! \QInt{p_1-p_2}{A^4}!}\,
  A^{-(p_1-p_2)(N-p_1+p_2)} [Z_1^{2p_1-N} Z_2^{2p_2-N}]
\end{align*}
after a final grouping of terms. 

This concludes the proof of Proposition~\ref{prop:QTraceTriangleJW}. 
\end{proof}

\subsection{Evaluation of Chebyshev threads of the second kind}
\label{sect:QTraceChebyshev2}

Let $[K]\in \SSS$ be a skein in a surface $S$ without boundary, and let $\lambda$ be an ideal triangulation of $S$. We want to compute the image   $ \Tr^\omega _\lambda\bigl([K^{S_N }]\bigr) \in \TT$ of the element $[K^{S_N }] \in \SSS$  obtained by threading the Chebyshev polynomial of the second kind  $S_N$ along $K$.

We will restrict attention to the case where the skein $[K]\in \SSS$ is \emph{simple}, in the sense that it is represented by a framed knot $K \subset S \times [0,1]$ whose projection to $S$ is a simple closed curve and whose framing is vertical. As we will see in \S \ref{subsect:ProofChebyQTracesFrob}, this is no big loss of generality as simple skeins generate the  algebra $\SSS$. 

 For such a simple skein $[K] \in \SSS$, arbitrarily pick an orientation for $K$. Let $e_{i_1}$, $e_{i_2}$, \dots, $e_{i_u}$, $e_{i_{u+1}}=e_{i_1}$ denote, in this order,  the edges of $\lambda$ that are crossed by the projection of $K$ to $S$. We can arrange by an isotopy that $e_{i_{k+1}} \neq e_{i_k}$ for every $k$. Let $T_{j_1}$, $T_{j_2}$, \dots $T_{j_u}$ be the triangles of $\lambda$ that are crossed by $K$, in such a way that $K$ crosses $T_{j_k}$ between $e_{i_k}$ and $e_{i_{k+1}}$. 
 
 In particular,  the edge $e_{i_k}$ determines two generators $Z_{i_k, j_{k-1}} \in \mathcal T^\omega(T_{j_{k-1}})$ and $Z_{i_k, j_k} \in \mathcal T^\omega(T_{j_k})$ in the Chekhov-Fock algebras of the adjacent triangles. If  we describe the surface $S$ as obtained by gluing the triangles $T_j$ together as in \S \ref{sect:QTrace}, the generator of $\TT \subset \bigotimes_{j=1}^m \mathcal T^\omega (T_j)$ associated to $e_{i_k}$  is then $Z_{i_k} = Z_{i_k, j_{k-1}}  \otimes Z_{i_k, j_{k}} $. 

When the projection of the knot $K$ to $S$ crosses $e_{i_k}$, the orientations of $K$ and $S$ determine a left and a right endpoint for $e_{i_k}$, and there are four possible configurations according to whether $e_{i_{k-1}}$ and $e_{i_{k+1}}$ are respectively  adjacent to the left or right endpoint of $e_{i_k}$ in the triangles $T_{j_{k-1}}$ and $T_{j_k}$. We will say that $K$ crosses $e_{i_k}$ in a \emph{left-left, left-right, right-left} or \emph{right-right pattern} accordingly.  For instance $K$ crosses $e_{i_k}$ in a left-right pattern if $e_{i_{k-1}}$ is adjacent to the left endpoint of $e_{i_k}$, and $e_{i_{k+1}}$ is adjacent to its right endpoint. 

Finally, the determination of the quantum traces $ \Tr^\omega _\lambda\bigl([K]\bigr)$ and $ \Tr^\omega _\lambda\bigl([K^{S_N }]\bigr)$ uses a careful control of the elevation of the strands of $K \subset S \times [0,1]$ above the faces $T_j$ of the triangulation $\lambda$, and often requires correction factors when the ordering of these elevations do not match near the edges $e_i$ of $\lambda$ (see \cite{BonWon1}). In order to simplify the computations, we require that $K$ meets the first edge  $e_{i_1}$ only once, which will enable us to sidetrack the correction terms. This is a strong requirement, but it will be sufficient for our purposes in \S\S \ref{sect:ChebPuncTorusSphere} and \ref{subsect:ProofChebyQTracesFrob}.

\begin{prop}
\label{prop:QTraceChebyshev2}
Let $[K]\in \SSS$ be a simple skein in the surface $S$, crossing the edges $e_{i_1}$, $e_{i_2}$, \dots, $e_{i_u}$ of the triangulation $\lambda$ in this order, crossing the face $T_{j_k}$ between $e_{i_k}$ and $e_{i_{k+1}}$, and crossing the first edge $e_{i_1}$ exactly once. Then, for every generic $A$, 
$$
 \Tr^\omega _\lambda\bigl([K^{S_N }]\bigr) = \sum_{p_1, \,p_2, \dots,\, p_u} a_0b_1 b_2 \dots b_u \,
\bigl \langle Z_{i_1}^{2p_1-N} Z_{i_2}^{2p_2-N} \dots Z_{i_u}^{2p_u -N} \bigr\rangle
$$
where the sum is over all integers $p_k$ with $0 \leq p_k \leq N$, where
$$
a_0 = \prod_{k=1}^u \frac{A^{(p_k - p_{k+1})^2}}{ \QInt{|p_k - p_{k+1}|}{A^4}!},
$$
where
 \begin{itemize}
\item[]
$
b_k = 
\begin{cases}
1 \text{ if } p_{k-1} \geq p_k \geq p_{k+1}\\
0 \text{ otherwise}
\end{cases}
$\kern -10pt
when $K$ crosses $e_k$ in a left-left pattern,
\item[]\medskip
$
b_k = 
\begin{cases}
1 \text{ if } p_{k-1} \leq p_k \leq p_{k+1}\\
0 \text{ otherwise}
\end{cases}
$\kern -10pt
when $K$ crosses $e_k$ in a right-right pattern,
\item[]\medskip
$
b_k = 
\begin{cases}
A^{2Np_k} \frac{\QInt{N-p_k}{A^4}!}{\QInt{p_k}{A^4}!}  \text{ if } p_{k-1} \geq p_k \leq p_{k+1}\\
0 \text{ otherwise}
\end{cases}
$\kern -10pt
when $K$ crosses $e_k$ in a left-right pattern, and
\item[]\medskip
$
b_k = 
\begin{cases}
A^{-2Np_k} \frac{\QInt{p_k}{A^4}!}{\QInt{N-p_k}{A^4}!} \text{ if } p_{k-1} \leq p_k \geq p_{k+1}\\
0 \text{ otherwise}
\end{cases}
$\kern -10pt
when $K$ crosses $e_k$ in a right-left pattern,
\end{itemize}
and where
\begin{multline*}
\bigl \langle Z_{i_1}^{2p_1-N} Z_{i_2}^{2p_2-N} \dots Z_{i_u}^{2p_u -N} \bigr\rangle = [Z_{i_1,j_1}^{2p_1-N}Z_{i_2,j_1}^{2p_2-N}] [Z_{i_2,j_2}^{2p_2-N}Z_{i_3,j_2}^{2p_3-N}]  \\
 \dots [Z_{i_{u-1},j_{u-1}}^{2p_{u-1}-N}Z_{i_u,j_{u-1}}^{2p_u-N}] [Z_{i_u,j_u}^{2p_u-N}Z_{i_1,j_u}^{2p_1-N}] .
\end{multline*}
\end{prop}
Note that the term $\bigl \langle Z_{i_1}^{2p_1-N} Z_{i_2}^{2p_2-N} \dots Z_{i_u}^{2p_u -N} \bigr\rangle$ is equal to the Weyl quantum ordering $\bigl [ Z_{i_1}^{2p_1-N} Z_{i_2}^{2p_2-N} \dots Z_{i_u}^{2p_u -N} \bigr]$ when the edges $e_{i_k}$ crossed by $K$ are all distinct, but  otherwise depends on our indexing of these edges. 

\begin{proof} 
We will use Jones-Wenzl idempotents. 

By Lemma~\ref{lem:ChebJW2}, the element $[K^{S_N}] \in \SSS$ can be obtained by threading the Jones-Wenzl idempotent $JW_N$ along $K$ (using a thickened annulus with core $K$ embedded in $S\times[0,1]$). Using the idempotent property of Figure~\ref{fig:JWidemProp}, we can even put a Jones-Wenzl idempotent in each subarc delimited by the intersection of $K$ with the edges of the ideal triangulation $\lambda$. Consequently, for each  $k$, replace the  subarc of $K$ that goes from $e_{i_k}$ to $e_{i_{k+1}}$ by a Jones-Wenzl idempotent $JW_N^{(k)}\in \mathcal T^\omega(T_{j_k})$ as in Figure~\ref{fig:TriangleJW}(a). Then  $[K^{S_N}]$ is equal to the element of $\SSS$ obtained by gluing the $JW_N^{(k)}$ together.

To apply  the State Sum Property of Theorem~\ref{thm:QTrace}, we arrange that $K$ steadily goes up in $S\times [0,1]$ in the $[0,1]$ factor as it traverses the triangles $T_{j_1}$, $T_{j_2}$, \dots, $T_{j_u}$, and then sharply goes down above a biangle neighborhood of $e_{i_1}$ to return to its  starting point. Because of our hypothesis that $K$ crosses $e_{i_1}$ only once, the gluing of $K\cap T_{j_1} \times [0,1]$ and $K\cap T_{j_u} \times [0,1]$ along $e_{i_1} \times [0,1]$ can be done without reshuffling the order of the strands of $K$ near $e_{i_1}$. In this way, we can avoid the correction factors above biangle neighborhoods of the edges $e_i$ that are usually required in the computation of the quantum trace of a general skein \cite{BonWon1}. 

The State Sum Property then gives that 
$$
\Tr^\omega _\lambda\bigl([K^{S_N }]\bigr) = \sum_{s_1,\, s_2, \dots, \,s_u} \Tr^\omega _{T_{j_1}} (JW_N^{(1)}, s_{1}\cup s_2) \Tr^\omega _{T_{j_2}} (JW_N^{(2)}, s_2\cup s_3) \dots \Tr^\omega _{T_{j_u}} (JW_N^{(u)}, s_{u}\cup s_1)
$$
where the $s_k$ range over all states for $JW_N^{(k-1)} \cap e_{i_k} \times [0,1] = JW_N^{(k)} \cap e_{i_k} \times [0,1]$. Letting $p_k = |s_k|$ be the number of $+$ signs in $s_k$, Proposition~\ref{prop:QTraceTriangleJW} computes the  contribution of each family of states $s_1$, $s_2$, \dots, $s_u$ as
$$
\prod_{k=1}^u \Tr^\omega _{T_{j_k}} ( JW_N^{(k)}, s_k\cup s_{k+1}) = c_1 \dots c_u \, A^{4\iota(s_1)}\dots A^{4\iota(s_u)} \,\bigl\langle Z_{i_1}^{2p_1-N} \dots Z_{i_u}^{2p_u-N}\bigr\rangle
$$
where
$$
c_k=
\begin{cases}
0 &\text{if } p_k<p_{k+1}\\
\frac{\QInt{N-p_{k+1}}{A^4}! \QInt{p_k}{A^4}!}{\QInt{N}{A^4}! \QInt{p_k-p_{k+1}}{A^4}!}\,
  A^{(p_k-p_{k+1})^2 +N(p_{k+1}- p_k)} &\text{if } p_k\geq p_{k+1}
\end{cases}
$$
when $e_k$ and $e_{k+1}$ are adjacent to the left, and
$$
c_k=
\begin{cases}
\frac{\QInt{N-p_k}{A^4}! \QInt{p_{k+1}}{A^4}!}{\QInt{N}{A^4}! \QInt{p_{k+1}-p_k}{A^4}!}\,
  A^{(p_k-p_{k+1})^2 +N(p_k-p_{k+1})} &\text{if } p_k\leq p_{k+1}\\
0 &\text{if } p_k>p_{k+1}
\end{cases}
$$
when $e_k$ and $e_{k+1}$  are adjacent to the right. 
If we fix the numbers $p_1$, $p_2$, \dots, $p_u$ and sum over all states $s_k$ with $|s_k|=p_k$, Lemma~\ref{lem:QBinomInversions} shows that 
$$
\sum_{|s_k|=p_k}  A^{4\iota(s_1)}  A^{4\iota(s_2)} \dots A^{4\iota(s_u)} = \QBinom{N}{p_1}{A^4}  \QBinom{N}{p_2}{A^4}   \dots \QBinom{N}{p_u}{A^4}  .
$$
Therefore, for a given set of numbers $p_1$, $p_2$, \dots, $p_u$, the contribution of the states $s_k$ with $|s_k|=p_k$ is equal to 
$$
 c_1 \dots c_u \, \QBinom{N}{p_1}{A^4}   \dots \QBinom{N}{p_u}{A^4}  \,\bigl\langle Z_1^{2p_1-N} \dots Z_k^{2p_k-N}\bigr\rangle.
$$
This product is often 0. When it is not, many of the quantum factorials involved in the coefficients $c_k$ and $\QBinom{N}{p_k}{A^4} = \frac{\QInt{N}{A^4}!}{\QInt{N-p_k}{A^4}!\QInt{p_k}{A^4}!}$ cancel out. For instance, all terms $\QInt{N}{A^4}!$ disappear. The remaining terms are then easily grouped as in the statement of Proposition~\ref{prop:QTraceChebyshev2}.
\end{proof}

\subsection{Evaluation of Chebyshev threads of the first kind}
\label{sect:QTraceChebyshev1}

We now turn to Chebyshev polynomials of the first kind $T_N$.  Remembering from Lemma~\ref{lem:FirstSecondChebyshevs} that $T_N = S_N -S_{N-2}$, we now want to evaluate
$$
\Tr^\omega _\lambda\bigl ([K^{T_N }] \bigr)  = \Tr^\omega _\lambda\bigl( [K^{S_N }] \bigr) - \Tr^\omega _\lambda \bigl( [K^{S_{N-2} }] \bigr)
$$
for a simple skein $[K]\in \SSS$ that satisfies the hypotheses of Proposition~\ref{prop:QTraceChebyshev2}. 

For this, it is convenient to rephrase the formula of Proposition~\ref{prop:QTraceChebyshev2} by putting more emphasis on the powers $n_k = 2p_k-N$ of the generators $Z_{i_k}$. Note that $-N \leq n_k \leq N$, and that $n_k$ has the same parity as $N$. 

We will say that a sequence $n_1$, $n_2$, \dots, $n_u$ is \emph{admissible} if each of  the corresponding $p_k = \frac{N+n_k}2$ contributes a non-trivial term to the formula of Proposition~\ref{prop:QTraceChebyshev2}, namely if $-N \leq n_k \leq N$ for every $k$, if each $n_k$ has the same parity as $N$, and if 
\begin{itemize}
\item[] $n_{k} \geq n_{k+1} $ when $e_{k}$ and $e_{k+1}$ are adjacent to the left, and 

\item[] $n_{k} \leq n_{k+1} $ when $e_k$ and $e_{k+1}$ are adjacent to the right.

\end{itemize}

Then Proposition~\ref{prop:QTraceChebyshev2} can be rephrased as

\begin{prop}
\label{prop:QTraceChebyshev2bis}
Under the hypotheses of Proposition~{\upshape\ref{prop:QTraceChebyshev2}}, 
$$
\Tr^\omega _\lambda \bigl ([K^{S_N }] \bigr) = \sum_{\text{admissible } n_1,\, n_2, \dots,\, n_u} a_0b_1 b_2 \dots b_u \,\bigl \langle Z_{i_1}^{n_1} Z_{i_2}^{n_2} \dots Z_{i_u}^{n_u} \bigr\rangle
$$
where
$$
a_0 = \prod_{k=1}^u \frac{A^{(\frac{n_k - n_{k+1}}2)^2}}{ \QInt{\frac{|n_k - n_{k+1}|}2}{A^4}!}
$$
where
\begin{itemize}
\item[] $b_k=1$ when $K$ crosses $e_k$ in a left-left or right-right pattern,

\item[] $b_k= A^{N(N+n_k)} \frac{\QInt{\frac{N-n_k}2}{A^4}!}{\QInt{\frac{N+n_k}2}{A^4}!} $ when $K$ crosses $e_k$ in a left-right pattern, and 

\item[] $b_k= A^{-N(N+n_k)} \frac{\QInt{\frac{N+n_k}2}{A^4}!}{\QInt{\frac{N-n_k}2}{A^4}!} $ when $K$ crosses $e_i$ in a right-left pattern
\end{itemize}
and where
\pushQED{\qed}
\begin{equation*}
\bigl \langle Z_{i_1}^{n_1} Z_{i_2}^{n_2} \dots Z_{i_u}^{n_u} \bigr\rangle = [Z_{i_1,j_1}^{n_1}Z_{i_2,j_1}^{n_2}] [Z_{i_2,j_2}^{n_2}Z_{i_3,j_2}^{n_3}]  
 \dots [Z_{i_{u-1},j_{u-1}}^{n_{u-1}}Z_{n_u, j_{u-1}}^{n_u}] [Z_{i_u,j_u}^{n_u}Z_{i_1,j_u}^{n_1}] . \qedhere
\end{equation*}
\end{prop}

An almost identical formula holds for $\Tr^\omega _\lambda\bigl([K^{S_{N-2} }]\bigr) $, except that the admissible sequences $n_1$, $n_2$, \dots, $n_u$ are further constrained by the condition that $-N+2 \leq n_k \leq N-2$ for every $k$. Because of the parity condition, this is equivalent to $-N< n_k < N$. 

More precisely,
$$
\Tr^\omega _\lambda \bigl ([K^{S_{N-2} }] \bigr) = \sum_{
\substack
{\text{admissible } n_1, \,n_2, \dots,\, n_u
\\ \text{with } -N<n_k<N}
} a_0b_1' b_2' \dots b_u' \,\bigl \langle Z_{i_1}^{n_1} Z_{i_2}^{n_2} \dots Z_{i_u}^{n_u} \bigr\rangle
$$
where $a_0$ is defined as in Proposition~\ref{prop:QTraceChebyshev2bis}, and
\begin{itemize}
\item[] $b_k'=1$ when $K$ crosses $e_{i_k}$ in a left-left or right-right pattern,

\item[] $b_k'= A^{(N-2)(N-2+n_k)} \frac{\QInt{\frac{N-2-n_k}2}{A^4}!}{\QInt{\frac{N-2+n_k}2}{A^4}!} $ when $K$ crosses $e_{i_k}$  in a left-right pattern,  

\item[] $b_k'= A^{-(N-2)(N-2+n_k)} \frac{\QInt{\frac{N-2+n_k}2}{A^4}!}{\QInt{\frac{N-2-n_k}2}{A^4}!} $ when $K$ crosses $e_{i_k}$  in a right-left pattern. 
\end{itemize}

So far, our computations assumed that $A$ was generic. We will see that many cancellations occur in this expression of $\Tr^\omega _\lambda \bigl ([K^{T_N }] \bigr)  = \Tr^\omega _\lambda \bigl ([K^{S_{N} }] \bigr)  - \Tr^\omega _\lambda \bigl ([K^{S_{N-2} }] \bigr) $ when $A^4$ is a primitive $N$--root of unity. However, because $\QInt{N}{A^4}=0$ in this case, we have to be careful in the definition of the quantities considered, and make sure that we never attempt to divide by 0. We will make sense of these properties by a limiting process.

\begin{lem}
\label{lem:ChebyshevDifference}
Let $n_1$, $n_2$, \dots, $n_u$ be an admissible sequence such that  $-N<n_k<N$ for every $k$. 
Then, the respective contributions
$$
a_0b_1 b_2 \dots b_u \,\bigl \langle Z_{i_1}^{n_1} Z_{i_2}^{n_2} \dots Z_{i_u}^{n_u} \bigr\rangle 
$$
and
$$
a_0b_1' b_2' \dots b_u' \,\bigl \langle Z_{i_1}^{n_1} Z_{i_2}^{n_2} \dots Z_{i_u}^{n_u} \bigr\rangle 
$$
of this admissible sequence to $\Tr^\omega _\lambda \bigl ([K^{S_N }] \bigr)$ and $\Tr^\omega _\lambda \bigl ([K^{S_{N-2} }] \bigr)$ have the same limit as $A^4$ tends to a primitive $N$--root of unity. 
\end{lem}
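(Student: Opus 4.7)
The plan is to show that $\prod_{i=1}^k b_i \to \prod_{i=1}^k b_i'$ as $A^4$ tends to a primitive $N$-th root of unity $\zeta$. The common prefactor $a_0$ involves only the half-differences $(n_i - n_{i+1})/2$ and is therefore literally identical in both formulas; moreover, the admissibility bound $-N < n_i < N$ keeps every relevant $q$-integer $\QInt{k}{A^4}$ with $0 < k < N$ nonzero at the limit, so that all factors in question have well-defined non-zero limits. Because $b_i = b_i' = 1$ for left-left and right-right patterns, only the left-right and right-left patterns contribute nontrivially to the ratio $\prod b_i/b_i'$.

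For a left-right pattern at $e_i$, a direct computation gives
\[
\frac{b_i}{b_i'} = A^{4N-4+2n_i}\, \frac{\QInt{(N-n_i)/2}{A^4}}{\QInt{(N+n_i)/2}{A^4}}.
\]
Writing $m = (N-n_i)/2$ and $m' = (N+n_i)/2$, so that $m + m' = N$ and $0 < m,m' < N$, the identity $\QInt{m}{\zeta}/\QInt{m'}{\zeta} = -\zeta^m$ (immediate from $\zeta^{m'} = \zeta^{-m}$), combined with $A^{4N}\to 1$, yields $b_i/b_i' \to -\varepsilon/\zeta$, where $A_0$ is the limit value of $A$, $\zeta = A_0^4$, and $\varepsilon = A_0^{2N} \in \{\pm 1\}$. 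A symmetric calculation gives $b_i/b_i' \to -\varepsilon\zeta$ for right-left patterns.

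The key combinatorial observation is topological: for each triangle $T_i$ that $K$ passes through, let $d_i \in \{L, R\}$ record the side of $K$ (inside $T_i$) on which the cut-off vertex (the common vertex of $e_{i-1}$ and $e_i$ in $T_i$) lies.  Tracing through the definitions, the cross-orientation of $K$ is continuous across $e_i$, so the crossing pattern at $e_i$ is exactly the pair $(d_i, d_{i+1})$. Since $K$ is a closed curve, $(d_1, d_2, \ldots, d_k)$ is a cyclic sequence of $L$'s and $R$'s, and any such cyclic sequence has the same number of $L \to R$ transitions as $R \to L$ transitions: $\#LR = \#RL$.

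Putting everything together,
\[
\prod_{i=1}^k \frac{b_i}{b_i'} \;\longrightarrow\; \Bigl(-\frac{\varepsilon}{\zeta}\Bigr)^{\#LR}(-\varepsilon\zeta)^{\#RL} = (-\varepsilon)^{2\#LR}\,\zeta^{\#RL - \#LR} = 1,
\]
using $\varepsilon^2 = 1$ and $\#LR = \#RL$. I expect the main conceptual step to be the cyclic identity $\#LR = \#RL$, which is what ties the algebraic miraculous cancellation back to the topology of the closed curve $K$; the remaining computations are straightforward bookkeeping with quantum factorials, carefully tracking the interplay between powers of $A$, powers of $q = A^4$, and the sign $\varepsilon = A^{2N}$.
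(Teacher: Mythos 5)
Your proposal is correct and follows essentially the same route as the paper: you compare $b_i$ with $b_i'$ pattern by pattern, find that the ratio tends to $-A^{2N-4}$ for left-right crossings and $-A^{-(2N-4)}$ for right-left crossings (your $-\epsilon\zeta^{\mp1}$ is the same quantity), and conclude that the product is $1$ because $K$ crosses as many edges in a left-right pattern as in a right-left pattern. The only difference is that you justify this last combinatorial fact (which the paper asserts without proof) by the cyclic-word argument on the sides of the cut-off vertices, which is a welcome, correct addition.
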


\begin{proof}
Because of the assumption that $-N<n_k<N$, all quantum integers involved in these contributions are different from $\QInt{N}{A^4}$ and we do not really have to worry about taking limits here. By continuity, choosing $A^4$ to be a primitive $N$--root of unity will be sufficient.  We therefore need to show that
$$
b_1 b_2 \dots b_u = b_1' b_2' \dots b_u'
$$
under this hypothesis on $A$. 

Let us compare the coefficients $b_k$ and $b_k'$. When $K$ crosses $e_{i_k}$ in a left-left or right-right patterns, we of course have that $b_k = b_k' =1$. 

When $K$ crosses $e_{i_k}$ in a left-right pattern,
\begin{align*}
b_k & = A^{N(N+n_k)} \frac{\QInt{\frac{N-n_k}2}{A^4}!}{\QInt{\frac{N+n_k}2}{A^4}!}
= 
A^{4N +2n_k-4}
 A^{(N-2)(N-2+n_k)}
  \frac{\QInt{\frac{N-n_k}2}{A^4}}{\QInt{\frac{N+n_k}2}{A^4}}
   \frac{\QInt{\frac{N-n_k}2-1}{A^4}!}{\QInt{\frac{N+n_k}2-1}{A^4}!}\\
   &= 
A^{4N +2n_k-4}
  \frac{\QInt{\frac{N-n_k}2}{A^4}}{\QInt{\frac{N+n_k}2}{A^4}}
  b_k'
  =A^{4N +2n_k-4}
  \frac{A^{2N-2n_k}-1}{A^{2N+2n_k}-1}
  b_k'
  =-A^{2N-4} b_k',
\end{align*}
using the fact that $A^{4N}=1$ for the last equality. 

When $K$ crosses $e_{i_k}$ in a right-left pattern, a similar computation gives
$$
b_k = -A^{-(2N-4)} b_k'.
$$

Note that $K$ crosses as many $e_{i_k}$ in a left-right pattern as in a right-left pattern. Therefore, as we compute the product of the $b_k$, the $-A^{\pm(2N-4)}$ terms cancel out, and 
\begin{equation*}
b_1 b_2 \dots b_u = b_1' b_2' \dots b_u' . \qedhere
\end{equation*}
\end{proof}

A consequence of Lemma~\ref{lem:ChebyshevDifference} is that, as we let $A^4$ tend to a primitive $N$--root of unity, all the terms of $ \Tr^\omega _\lambda \bigl( [K^{S_{N-2} }] \bigr)$ cancel out with terms of $\Tr^\omega _\lambda \bigl( [K^{S_N }] \bigr) $ in the difference $\Tr^\omega _\lambda \bigl( [K^{T_N }] \bigr) = \Tr^\omega _\lambda \bigl( [K^{S_N }] \bigr) - \Tr^\omega _\lambda \bigl( [K^{S_{N-2} }] \bigr)$. 

We now consider the remaining terms of $\Tr^\omega _\lambda \bigl( [K^{S_N }] \bigr)$.

\begin{lem}
\label{lem:ChebyshevCancel}
Let $n_1$, $n_2$, \dots, $n_u$ be an admissible sequence such that at least one $n_k$ is equal to $\pm N$ and at least one $n_l$ is \emph{not} equal to $\pm N$. 
Then, the contribution
$$
a_0b_1 b_2 \dots b_u \,\bigl \langle Z_{i_1}^{n_1} Z_{i_2}^{n_2} \dots Z_{i_u}^{n_u} \bigr\rangle 
$$
of this admissible sequence to $\Tr^\omega _\lambda \bigl ([K^{T_N }] \bigr)$ converges to $0$ as $A^4$ tends to a primitive $N$--root of unity. 
\end{lem}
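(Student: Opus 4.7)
The plan is to view the coefficient $a_0 b_1 \cdots b_k$ as a meromorphic function of $A$ and to show that it vanishes to order at least $1$ at any value $A_0$ with $A_0^4 = q_0$ a primitive $N$--th root of unity. The Weyl--ordered monomial $[Z_1^{n_1}\cdots Z_k^{n_k}]$ differs from $Z_1^{n_1}\cdots Z_k^{n_k}$ by a bounded power of $\omega$ and so remains regular in the limit, so this will give the conclusion.

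First I would catalogue zeros and poles of each factor, using the basic fact that for $0 \leq m \leq N$ the quantum factorial $\QInt{m}{A^4}!$ acquires a simple zero at $A^4 = q_0$ exactly when $m=N$ (since $\QInt{j}{A^4}$ vanishes at $q_0$ only for $j=N$ in that range). Applied to the formulas of Proposition~\ref{prop:QTraceChebyshev2bis} this gives: $a_0$ has a simple pole at each index $i$ with $\{n_i, n_{i+1}\} = \{N, -N\}$ and is regular otherwise; each $b_i$ is regular whenever $|n_i| < N$; and when $|n_i|=N$, $b_i$ has a simple zero if the pattern at $e_i$ is left--right with $n_i = -N$ or right--left with $n_i = N$, and a simple pole in the two opposite cases.

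The combinatorial heart of the argument is a ``run--by--run'' bookkeeping. Partition the cyclic index set by the maximal runs $[a,b]$ of consecutive $i$ with $|n_i|=N$. By hypothesis at least one such run exists, and each is a proper cyclic segment, so both $|n_{a-1}|$ and $|n_{b+1}|$ are strictly less than $N$. Writing $\sigma_i\in\{+,-\}$ for the sign of $n_i$ inside a run, the admissibility inequalities at the two run endpoints force the boundary side data: $s_a = R$ iff $\sigma_a = +$, and $s_{b+1}=L$ iff $\sigma_b = +$; likewise every interior sign change $\sigma_{i-1}\neq\sigma_i$ pins down $s_i$. Encoding these by $\tilde\sigma_i,\tilde s_i\in\{\pm1\}$ (with $\tilde s_i = +1$ for $R$), one checks that the net excess of zeros of $b_a\cdots b_b$ equals $\tfrac12\sum_{i=a}^b \tilde\sigma_i(\tilde s_i - \tilde s_{i+1})$. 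An Abel summation converts this to boundary terms $\tilde\sigma_a\tilde s_a = +1$ and $-\tilde\sigma_b\tilde s_{b+1} = +1$, plus an interior contribution of $+2$ at each interior sign change of $\sigma$, evaluating to $1 + A_{\mathrm{run}}$, where $A_{\mathrm{run}}$ is the number of sign changes of $\sigma$ inside the run. Since $a_0$ contributes exactly $A_{\mathrm{run}}$ poles from the same run (and none from indices straddling the run boundary, where one neighbour has modulus strictly less than $N$), each run yields a net zero of order precisely $1$. Summing over all runs gives total zero order at least $1$, and the contribution vanishes in the limit.

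The step I expect to be the most delicate is the boundary bookkeeping: carefully verifying that admissibility, combined with $|n_{a-1}|, |n_{b+1}| < N$, really does pin down $s_a$ and $s_{b+1}$ as claimed, and that the forced values of $s_i$ at interior sign changes mesh correctly with the Abel--summation identity so that no $+2$ contributions are miscounted.
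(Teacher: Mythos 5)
Your proof is correct and takes essentially the same route as the paper's: both catalogue exactly where the quantum integer $\QInt{N}{A^4}$ produces zeros and poles in $a_0$ and in the $b_i$, and both use the admissibility inequalities to force the left/right patterns at the boundaries of runs of $n_i=\pm N$, so that zeros outnumber poles. The only difference is bookkeeping: the paper counts maximal constant-sign runs against the number of sign switches, whereas you organize by maximal runs of $|n_i|=N$ and use an Abel summation to get vanishing order exactly one per run --- an equivalent count.
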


\begin{proof}
 In the expression of $a_0b_1 b_2 \dots b_u$, the only quantum integers that can tend to $0$ are the terms $\QInt{N}{A^4}$. We therefore have to show that more quantum integers $\QInt{N}{A^4}$ occur in the numerator of this expression than in the denominator. 
 
 The number of terms $\QInt{N}{A^4}$  in the denominator of $a_0$ is equal to the number of indices $k$ where $n_k$ switches from $n_k=\pm N$ to $n_{k+1}=\mp N$. 

A factor $\QInt{N}{A^4}$ occurs in the numerator of a coefficient $b_k$ exactly when
\begin{itemize}
\item[] $n_k=-N$ and $K$ crosses $e_{i_k}$ in a left-right pattern, or
\item[] $n_k=+N$ and $K$ crosses $e_{i_k}$ in a right-left pattern.
\end{itemize}

Similarly, a factor $\QInt{N}{A^4}$ occurs in the denominator of a coefficient $b_k$ exactly when
\begin{itemize}
\item[] $n_k=+N$ and $K$ crosses $e_{i_k}$ in a left-right pattern, or
\item[] $n_k=-N$ and $K$ crosses $e_{i_k}$ in a right-left pattern.
\end{itemize}

Consider a maximal sequence of consecutive $n_k=+N$ namely, considering indices modulo $u$, two indices $k_1$, $k_2$ such that $n_k=+N$ whenever $k_1\leq k \leq k_2$, and $n_{k_1-1}\not=+N$ and $n_{k_2+1}\not=+N$. Note that, since the sequence of $n_k$ is admissible,  the edges $e_{i_{k_1-1}}$ and $e_{i_{k_1}}$ are necessarily adjacent on the right-hand side of $K$, whereas $e_{i_{k_2}}$ and $e_{i_{k_2+1}}$ are adjacent on the left. Therefore, if we examine how $K$ crosses $e_{i_k}$ when  $k_1\leq k \leq k_2$, we see one more right-left pattern than left-right patterns. It follows that this maximal sequence  of consecutive $n_k=+N$ contributes one more $\QInt{N}{A^4}$ to the numerator than to the denominator of the product of the corresponding $b_k$. 

Similarly, a maximal sequence of consecutive $n_k=-N$ contributes one more $\QInt{N}{A^4}$ to the numerator than to the denominator of the product of the corresponding $b_k$. 

Because of our assumption that there exists at least one $n_l \not = \pm N$, the number of $k$ where $n_k$ switches from $n_k=\pm N$ to $n_{k+1}=\mp N$ is strictly less that the total number of  maximal sequences of consecutive $n_k=+N$ and of maximal sequences of consecutive $n_k=-N$. It follows that there is at least one more $\QInt{N}{A^4}$ in the numerator of $a_0b_1 b_2 \dots b_u$ than in the denominator. This term therefore converges to 0 as $A^4$ tends to a primitive $N$--root of unity. 
\end{proof}

\begin{lem}
\label{lem:ChebyshevStay}
Let $n_1$, $n_2$, \dots, $n_u$ be an admissible sequence where each  $n_k$ is equal to $\pm N$. 
Then the contribution
$$
a_0b_1 b_2 \dots b_u \,\bigl \langle Z_{i_1}^{n_1} Z_{i_2}^{n_2} \dots Z_{i_u}^{n_u} \bigr\rangle 
$$
of this admissible sequence  to $\Tr^\omega _\lambda \bigl ([K^{T_N }] \bigr)$  is equal to $\bigl \langle Z_{i_1}^{n_1} Z_{i_2}^{n_2} \dots Z_{i_u}^{n_u} \bigr\rangle $. 
\end{lem}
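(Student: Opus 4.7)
The plan is to reduce the claim to a purely combinatorial identity. Since admissibility for the sum describing $\Tr_\lambda^\omega\bigl([K^{S_{N-2}}]\bigr)$ requires $-N<n_i<N$ for every $i$, any admissible sequence with some $n_i=\pm N$ contributes nothing there. Hence, for a sequence with all $n_i=\pm N$, its contribution to $\Tr_\lambda^\omega\bigl([K^{T_N}]\bigr)=\Tr_\lambda^\omega\bigl([K^{S_N}]\bigr)-\Tr_\lambda^\omega\bigl([K^{S_{N-2}}]\bigr)$ is exactly the Proposition~\ref{prop:QTraceChebyshev2bis} term $a_0 b_1\dots b_k\,[Z_1^{n_1}\dots Z_k^{n_k}]$. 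The lemma therefore amounts to showing that the scalar $a_0 b_1\dots b_k$ tends to $1$ as $A^4$ tends to a primitive $N$-th root of unity.

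Introduce signs $\sigma_i=n_i/N\in\{\pm1\}$ on edges and $\ell_i\in\{\pm1\}$ on triangles, with $\ell_i=+1$ exactly when the vertex of $T_i$ common to $e_{i-1}$ and $e_i$ lies on the left of $K$. The pattern at $e_i$ is then $(\ell_i,\ell_{i+1})$, and the admissibility rules translate into: $\ell_i=+1$ forbids the transition $(\sigma_{i-1},\sigma_i)=(-1,+1)$, while $\ell_i=-1$ forbids the opposite transition. Setting $\alpha=A^{N^2}$ and $F_N=\QInt{N}{A^4}!$, each $\sigma$-switch contributes a factor $\alpha/F_N$ to $a_0$, and each of the four types of $\ell$-switches at $e_i$ contributes $\alpha^{\pm2}F_N^{\mp1}$ to $b_i$ according to the four cases of Proposition~\ref{prop:QTraceChebyshev2bis}; the remaining factors equal $1$.

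Let $s_0$ be the cyclic number of $-1\to+1$ transitions of $\sigma$ (which equals the number of $+1\to-1$ transitions), and let $t_+^{LR}$ and $t_+^{RL}$ denote the numbers of $\ell$-descents and $\ell$-ascents occurring at indices $i$ with $\sigma_i=+1$. Collecting exponents, and using the cyclic equality of the total numbers of $\ell$-ascents and $\ell$-descents, a short calculation yields
$$
a_0 b_1\dots b_k \;=\; \alpha^{2(s_0-t_+^{RL}+t_+^{LR})}\,F_N^{-2(s_0-t_+^{RL}+t_+^{LR})}.
$$
The lemma thus reduces to the combinatorial identity $t_+^{RL}-t_+^{LR}=s_0$.

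To establish this identity I would decompose the cyclic sequence $\sigma$ into its $s_0$ maximal runs of consecutive $+1$'s. If such a run occupies positions $\{j+1,\dots,j+m\}$, admissibility forces $\ell_{j+1}=-1$ and $\ell_{j+m+1}=+1$, while $\ell_{j+2},\dots,\ell_{j+m}$ are unconstrained; the telescoping identity $\sum_{i=j+1}^{j+m}(\ell_{i+1}-\ell_i)=2$ then shows that this segment has exactly one more $\ell$-ascent than $\ell$-descent, contributing $+1$ to $t_+^{RL}-t_+^{LR}$. Summing over the $s_0$ runs gives the identity, the exponent above vanishes, and $a_0 b_1\dots b_k=1$. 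The degenerate case where all $\sigma_i$ agree (so $s_0=0$ and the $\ell$-sequence is unconstrained) is handled separately by the cyclic symmetry between ascents and descents. The main obstacle I anticipate is the bookkeeping: keeping straight the distinction between $\sigma$ indexed by edges and $\ell$ indexed by triangles, and verifying that the cyclic counting of switches balances precisely enough for the exponent of $\alpha/F_N$ to collapse to zero rather than to some nonzero integer (which would make the limit $0$ or $\infty$ instead of~$1$).
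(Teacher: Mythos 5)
Your proof is correct and is essentially the paper's own argument: your identity $t_+^{RL}-t_+^{LR}=s_0$ is exactly the paper's vanishing of $\alpha+\beta-\beta'$, established by the same decomposition into maximal runs of $n_i=+N$ whose delimiting adjacencies are forced by admissibility (the paper merely treats the cancellation of the $\QInt{N}{A^4}!$ factors by quoting the count from the proof of Lemma~\ref{lem:ChebyshevCancel} instead of folding it into the same exponent as you do). One harmless slip in your prose: the four switch cases actually contribute $\alpha^{2}F_N^{-1}$, $F_N$, $\alpha^{-2}F_N$, $F_N^{-1}$ (two carry no power of $\alpha$), but your collected exponent formula is nevertheless the correct one once the cyclic equality of left-right and right-left patterns is used, so the argument stands.
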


\begin{proof}
The proof of Lemma~\ref{lem:ChebyshevCancel} shows that, in this case, we have exactly as many quantum factorials $\QInt{N}{A^4}!$ in the numerator as in the denominator of $a_0b_1 b_2 \dots b_u$. These quantum factorials therefore cancel out.

All remaining quantum factorials are equal to $\QInt{0}{A^4}!=1$. 

We therefore only have to worry about the powers of $A$ that occur in $a_0b_1 b_2 \dots b_u$. Going back to the definition of the constants $a_0$ and $b_k$ in Proposition~\ref{prop:QTraceChebyshev2bis}, one obtains that
$$
 a_0b_1 b_2 \dots b_u = A^{2N^2(\alpha+\beta-\beta')}
$$
where
\begin{align*}
\alpha &={\textstyle  \frac12 } \sum_{k=1}^u {\textstyle \left(\frac {n_k - n_{k+1}}{2N}\right)^2} = {\textstyle \frac12} { \# \{k; n_k \not= n_{k+1}\} }
\\
\beta&= \sum_{\text{left-right pattern at } e_{i_k}}  \kern -5pt {\textstyle \frac{N+n_k}N}
=\# \{k; n_k=+N \text{ and left-right pattern at } e_{i_k}\}
\\
\text{and } \beta' &= \sum_{\text{right-left pattern at } e_{i_k}}\kern -5pt {\textstyle  \frac{N+n_k}N}
=\# \{k; n_k=+N \text{ and right-left  pattern at } e_{i_k}\}.
\end{align*}

If all $n_k$ are equal to each other, then $\alpha =0$ and $\beta=\beta'$, so that  $\alpha+ \beta - \beta'=0$. 

Otherwise, $\alpha$ is equal to the number of intervals $I = \{k_1, k_1+1, \dots, k_2-1, k_2\}$  in the index set (counting indices modulo $u$) where $n_k=+N$ for every $k\in I$ while $n_{k_1-1}=n_{k_2+1}=-N$. For such an interval $I$, the indices $k\in I$ contribute a total of $-1$ to $\beta-\beta'$. Since there are $\alpha$ such intervals $I$, we conclude that $\alpha + \beta-\beta'=0$ in this case as well. 

This proves that $ a_0b_1 b_2 \dots b_u =1$ is all cases. \end{proof}

If we combine Proposition~\ref{prop:QTraceChebyshev2bis} and Lemmas~\ref{lem:ChebyshevDifference}, \ref{lem:ChebyshevCancel} and \ref{lem:ChebyshevStay}, we now have the following computation.

\begin{prop}
\label{prop:QTraceChebyshev1}
Suppose that $A^4$ is a primitive $N$--root of unity. Let $[K]\in \SSS$ be a simple skein in the surface $S$, crossing the edges $e_{i_1}$, $e_{i_2}$, \dots, $e_{i_u}$ of the triangulation $\lambda$ in this order, crossing the face $T_{j_k}$ between $e_{i_k}$ and $e_{i_{k+1}}$, and crossing the first edge $e_{i_1}$ exactly once. Then, 
$$
 \Tr^\omega _\lambda\bigl ([K^{T_N }] \bigr) = 
\sum_{\tiny
\begin{matrix}
\text{admissible } n_1,\, n_2, \dots,\, n_u\\
\text{with }n_k=\pm N
\end{matrix}
}
\bigl \langle Z_{i_1}^{n_1} Z_{i_2}^{n_2} \dots Z_{i_u}^{n_u} \bigr\rangle 
$$
where the sum is over all admissible sequences $n_1$, $n_2$, \dots, $n_k$ with all $n_k=\pm N$, and where
\pushQED{\qed}
\begin{equation*}
\bigl \langle Z_{i_1}^{n_1} Z_{i_2}^{n_2} \dots Z_{i_u}^{n_u} \bigr\rangle = [Z_{i_1,j_1}^{n_1}Z_{i_2,j_1}^{n_2}] [Z_{i_2,j_2}^{n_2}Z_{i_3,j_2}^{n_3}]  
 \dots [Z_{i_{u-1},j_{u-1}}^{n_{u-1}}Z_{n_u, j_{u-1}}^{n_u}] [Z_{i_u,j_u}^{n_u}Z_{i_1,j_u}^{n_1}] . \qedhere
\end{equation*}
\end{prop}

Note the dramatic difference between  the number of terms in the expression of $ \Tr^\omega _\lambda\bigl ([K^{T_N }] \bigr) $ provided by Proposition~\ref{prop:QTraceChebyshev1}, and that in the formula for generic $A$ given in  Proposition~\ref{prop:QTraceChebyshev2bis}. Indeed, the number of monomials in the formula of Proposition~\ref{prop:QTraceChebyshev1}  is independent of $N$. On the other hand, the number of terms in the expression of Proposition~\ref{prop:QTraceChebyshev2bis} is a polynomial in $N$ of degree $k$ (it is the Ehrhart polynomial of a certain $k$--dimensional polytope determined by the admissibility conditions).  

We can rephrase Proposition~\ref{prop:QTraceChebyshev1} in terms of the Frobenius homomorphism $\mathbf F^\omega \colon \TTT   \to \TT $ of Proposition~\ref{prop:Frobenius}. Recall that $\iota = \omega^{N^2}$. 
\begin{cor}
\label{cor:QTraceChebFrobSpecial}
Under the hypotheses of Proposition~{\upshape \ref{prop:QTraceChebyshev1}}, $ \Tr^\omega _\lambda\bigl ([K^{T_N }] \bigr) =\mathbf F^\omega \Bigl( \Tr^\iota _\lambda\bigl ([K] \bigr)\Bigr) $. 
\end{cor}
\begin{proof}
The conclusion of Proposition~\ref{prop:QTraceChebyshev1} can be rewritten as
$$
 \Tr^\omega _\lambda\bigl ([K^{T_N }] \bigr) = 
\sum_{\tiny
\begin{matrix}
\text{admissible } m_1, \,m_2, \dots, \,m_u\\
\text{with }m_k=\pm 1
\end{matrix}
}
\bigl \langle Z_{i_1}^{m_1N} Z_{i_2}^{m_2N} \dots Z_{i_u}^{m_uN} \bigr\rangle .
$$
Also, by definition of the quantum trace homomorphism in Theorem~\ref{thm:QTrace} (or replacing $N$ by 1 and $\omega$ by $\iota$ in Proposition~\ref{prop:QTraceChebyshev1}),
$$
 \Tr^\iota _\lambda\bigl ([K] \bigr) = 
\sum_{\tiny
\begin{matrix}
\text{admissible } m_1,\, m_2, \dots,\, m_u\\
\text{with }m_k=\pm 1
\end{matrix}
}
\bigl \langle Z_{i_1}^{m_1} Z_{i_2}^{m_2} \dots Z_{i_u}^{m_u} \bigr\rangle .
$$
The result then immediately follows from the fact that $\mathbf F^\omega$ is induced by the homomorphism $\bigotimes_{j=1}^m \mathcal T^\iota(T_j) \to \bigotimes_{j=1}^m \mathcal T^\omega(T_j)$ that sends each $Z_{i,j} \in \mathcal T^\iota(T_j)$ to $Z_{i,j}^N \in \mathcal T^\omega(T_j)$. 
\end{proof}

\subsection{Proof of Theorems~\ref{thm:ChebyshevCentral} and \ref{thm:ChebSkeinRelation}} 
\label{sect:ChebPuncTorusSphere} 

In \S \ref{sect:ThreadCheb} and \S\ref{sect:ChebHom}, we had not finished proving Theorems~\ref{thm:ChebyshevCentral} and \ref{thm:ChebSkeinRelation}, namely the statements that Chebyshev threads $[K^{T_N}]$ are central in $\SSS$ and that the Chebyshev map $\mathbf T^A \colon \SSSS \to \SSS$ is a well-defined algebra homomorphism. Indeed, our arguments relied on three lemmas whose proofs we had temporarily postponed,  Lemmas~\ref{lem:CentralPuncturedTorus}, \ref{lem:ChebSkeinRelationPuncTorus} and \ref{lem:ChebSkeinRelationPuncSphere}. This section is devoted to proving these statements, using the  computations of the previous section.

\begin{figure}[htbp]
\SetLabels
(.05 * .55) $L_0 $ \\
( .34*  .7) $L_\infty $ \\
(.6 *  .64) $ L_1$ \\
(.83 *  .46) $L_{-1} $ \\
( .1* 1) $e_0 $ \\
( .08* .2) $ e_1$ \\
(-.02 * .4) $ e_\infty$ \\
( .07* .75) $ T_1$ \\
( .17* .22) $T_2 $ \\
\endSetLabels
\centerline{\AffixLabels{\includegraphics{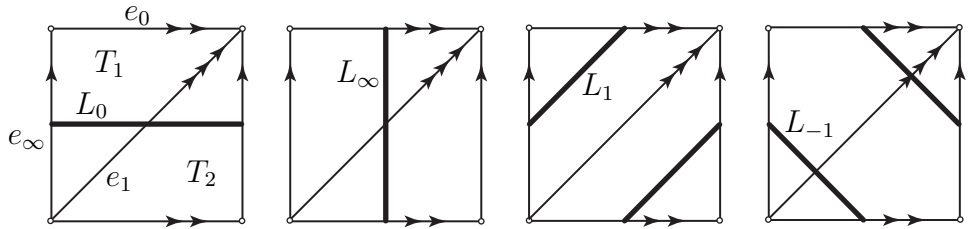}}}

\caption{The curves $L_0$, $L_\infty$, $L_1$ and $L_{-1}$ in the once-punctured torus, with an ideal triangulation $\lambda$}
\label{fig:PuncturedTorus2}
\end{figure}

We begin with Lemma~\ref{lem:CentralPuncturedTorus}, which we repeat for the convenience of the reader.

\begin{lem}
\label{lem:CentralPuncturedTorus2}
In the once-punctured torus $T$, let $L_0$ and $L_\infty$ be the two curves represented in Figure~{\upshape\ref{fig:PuncturedTorus2}} (or Figure~{\upshape\ref{fig:PuncturedTorus}}), and consider these curves as framed knots with vertical framing in $T\times[0,1]$. If $A^2$ is a primitive $N$--root of unity, then
$$
[L_0^{T_N}] [L_\infty] = [L_\infty][L_0^{T_N}] 
$$
in $\mathcal S^A(T)$. 
\end{lem}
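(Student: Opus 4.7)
\medskip

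\noindent\textbf{Proof plan.}

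The plan is to test the desired identity $[L_0^{T_N}][L_\infty] = [L_\infty][L_0^{T_N}]$ inside the Chekhov--Fock algebra $\mathcal T^\omega(\lambda)$, using the injectivity of the quantum trace homomorphism $\Tr_\lambda^\omega\colon \mathcal S^A(T) \to \mathcal T^\omega(\lambda)$ from Theorem~\ref{thm:QTrace}. Fix the ideal triangulation $\lambda$ of $T$ with edges $e_0, e_1, e_\infty$ shown in Figure~\ref{fig:PuncturedTorus2}, with corresponding generators $Z_0, Z_1, Z_\infty$ of $\mathcal T^\omega(\lambda)$. Since $\Tr_\lambda^\omega$ is an injective algebra homomorphism, it suffices to prove that $\Tr_\lambda^\omega([L_0^{T_N}])$ commutes with $\Tr_\lambda^\omega([L_\infty])$ in $\mathcal T^\omega(\lambda)$.

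First, I would compute $\Tr_\lambda^\omega([L_0^{T_N}])$ by Proposition~\ref{prop:QTraceChebyshev1}. This is legitimate because Proposition~\ref{prop:QTraceChebyshev1} was established using only Jones--Wenzl idempotents and the state sum formula for $\Tr_\lambda^\omega$, and not via the (not-yet-constructed) Chebyshev homomorphism $\mathbf T^A$; hence there is no circular dependence on Lemma~\ref{lem:CentralPuncturedTorus2}. Since $L_0$ is $\lambda$--simple, crossing exactly those edges of $\lambda$ that it meets (essentially $e_1$ and $e_\infty$), the resulting element is a sum of Weyl-ordered monomials $[Z_1^{n_1}Z_\infty^{n_\infty}]$ in which every exponent $n_i$ equals $\pm N$. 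Next, I would compute $\Tr_\lambda^\omega([L_\infty])$ directly from Theorem~\ref{thm:QTrace} as a similar (but simpler) sum of Weyl-ordered monomials in the $Z_i$ with exponents $\pm 1$.

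With these two formulas in hand, the commutation reduces to an elementary check in $\mathcal T^\omega(\lambda)$. The crucial input is the hypothesis: $A^2$ is a primitive $N$--root of unity means $A^{2N}=1$, hence $\omega^{4N}=1$, so $\omega^{2N}=\pm 1$. Consequently, for any generator $Z_j$ and any $N$--th power $Z_i^N$ of a generator,
\[
Z_i^N Z_j = \omega^{2N\sigma_{ij}} Z_j Z_i^N = (\omega^{2N})^{\sigma_{ij}} Z_j Z_i^N = (\pm 1)^{\sigma_{ij}} Z_j Z_i^N,
\]
so the $N$--th powers commute with all generators up to a sign $(\pm 1)^{\sigma_{ij}}$. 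Accumulating these signs across the Weyl-ordered monomials appearing in $\Tr_\lambda^\omega([L_0^{T_N}])$ and $\Tr_\lambda^\omega([L_\infty])$ reduces the problem to an identity in the exterior-algebra-like sign field; the admissibility conditions on the sequences $(n_1,\ldots,n_k)$ of Proposition~\ref{prop:QTraceChebyshev1} (together with the fact that for a closed loop on $T$ the signs $\sigma_{ij}$ between edges crossed by $L_0$ and $L_\infty$ come in canceling pairs) should make the residual sign equal to $+1$, yielding the desired commutation.

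The main obstacle will be in the last step: verifying the sign cancellation explicitly. This involves keeping careful track of the skew-commutation exponents $\sigma_{ij}$ for the specific triangulation $\lambda$ of the once-punctured torus and reading off, from Proposition~\ref{prop:QTraceChebyshev1}, which monomials $[Z_1^{\pm N} Z_\infty^{\pm N}]$ actually occur. Since the resulting formula for $\Tr_\lambda^\omega([L_0^{T_N}])$ is a symmetric sum of only four monomials (one for each admissible $(\pm N,\pm N)$), and since $\Tr_\lambda^\omega([L_\infty])$ has an analogous symmetric form, I expect the commutation to reduce to the identity $\omega^{4N}=1$ after straightforward bookkeeping---the miraculous cancellation anticipated by the theme of the paper.
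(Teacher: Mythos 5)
Your overall strategy is the same as the paper's: push everything through the injective quantum trace $\Tr_\lambda^\omega$, compute $\Tr_\lambda^\omega\bigl([L_0^{T_N}]\bigr)$ via Proposition~\ref{prop:QTraceChebyshev1}, and observe that $N$--th powers of the generators essentially commute with everything because $\omega^{4N}=A^{-2N}=1$. (In fact your final ``sign bookkeeping'' step is even easier than you anticipate: for this triangulation of the once-punctured torus every pair of distinct edges is adjacent at both ends, so $\sigma_{ij}=\pm2$ and $Z_i^NZ_j=\omega^{2N\sigma_{ij}}Z_jZ_i^N=\omega^{\pm4N}Z_jZ_i^N=Z_jZ_i^N$ exactly; thus $\Tr_\lambda^\omega\bigl([L_0^{T_N}]\bigr)$ is outright central in $\mathcal T^\omega(\lambda)$ and no residual signs have to cancel against $\Tr_\lambda^\omega\bigl([L_\infty]\bigr)$.) Your observation that Proposition~\ref{prop:QTraceChebyshev1} is independent of the Chebyshev homomorphism $\mathbf T^A$, so there is no circularity, is also correct and matches the paper's logic.

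The genuine gap is the parity of $N$. The hypothesis of the lemma is only that $A^2$ is a primitive $N$--root of unity, with no parity condition (and Theorem~\ref{thm:ChebyshevCentral}, which rests on this lemma, is explicitly asserted with no parity condition). Proposition~\ref{prop:QTraceChebyshev1}, however, requires $A^4$ to be a primitive $N$--root of unity. If $N$ is odd this follows from the hypothesis, and your argument goes through; but if $N$ is even, $A^4$ is only a primitive $\frac N2$--root of unity, so the proposition does not give you the claimed expansion of $\Tr_\lambda^\omega\bigl([L_0^{T_N}]\bigr)$ into monomials with exponents $\pm N$, and your proof covers only half the cases. The paper handles even $N$ separately: it applies Proposition~\ref{prop:QTraceChebyshev1} with $\frac N2$ in place of $N$ to express $\Tr_\lambda^\omega\bigl([L_0^{T_{N/2}}]\bigr)$ as a sum of monomials $Z_1^{\pm N/2}Z_\infty^{\pm N/2}$ (which need \emph{not} be central, since $\omega^{2N}$ may be $-1$), and then uses $T_N=T_2\circ T_{N/2}$, i.e.\ $\Tr_\lambda^\omega\bigl([L_0^{T_N}]\bigr)=\bigl(\Tr_\lambda^\omega([L_0^{T_{N/2}}])\bigr)^2-2$, to land on monomials $Z_1^{m_1}Z_\infty^{m_2}$ with $m_1,m_2\in\{-N,0,N\}$, which are central by the same $\omega^{4N}=1$ computation. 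You would need to add this (or an equivalent) reduction to make your argument complete.
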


\begin{proof}
Consider the ideal triangulation $\lambda$ represented in Figure~\ref{fig:PuncturedTorus2}, with edges $e_0$, $e_1$ and $e_\infty$.  
The quantum trace map $\Tr_\lambda^\omega \colon \mathcal S^A(T) \to \TT$ is an injective algebra homomorphism \cite[Prop.~29]{BonWon1}, so that it suffices to check that
$$
\Tr_\lambda^\omega\bigl([L_0^{T_N}] \bigr) \Tr_\lambda^\omega\bigl([L_\infty] \bigr)= \Tr_\lambda^\omega\bigl( [L_\infty] \bigr) \Tr_\lambda^\omega\bigl(  [L_0^{T_N}] \bigr) .
$$

First consider the case where $N$ is odd. Then, $A^4$ is also a primitive $N$--root of unity, and we can use the computations of \S \ref{sect:QTraceChebyshev1}.
Let $Z_0$, $Z_1$, $Z_\infty$ be the generators of $\TT$ respectively associated to the edges $e_0$, $e_1$, $e_\infty$ of $\lambda$. Proposition~\ref{prop:QTraceChebyshev1} then shows that the quantum trace $\Tr_\lambda^\omega\bigl([L_0^{T_N}] \bigr)$ is a Laurent polynomial in the variables $Z_1^N$ and $Z_\infty^N$. 

Note that, when $i\not=j$, the two ends of the edge $e_i$ are adjacent to the two ends of $e_j$, so that $Z_iZ_j = \omega^{\pm4} Z_jZ_i$. In particular, $Z_i^N$ commutes with $Z_j$ since $\omega^{4N}=A^{-2N}=1$.  As a consequence,  $\Tr_\lambda^\omega\bigl([L_0^{T_N}] \bigr)$ is central in $\TT$, and in particular commutes with $\Tr_\lambda^\omega\bigl([L_\infty] \bigr)$.  This proves the desired property when $N$ is odd. 

When $N$ is even, $A^4$ is a primitive $\frac N2$--root of unity. Proposition~\ref{prop:QTraceChebyshev1}  now shows that $\Tr_\lambda^\omega\bigl([L_0^{T_{N/2}}] \bigr) = T_{\frac N2} \bigl(\Tr_\lambda^\omega ([L_0] )\bigr) $ is a linear combination of monomials $Z_1^{n_1}Z_\infty^{n_2}$ with $n_1$, $n_2\in \{-\frac N2, +\frac N2\}$. By definition of Chebyshev polynomials, $T_N = T_2\circ T_{\frac N2}$ (use for instance Lemma~\ref{lem:Chebyshev}) and $T_2(x) = x^2-2$. Therefore
$$
\Tr_\lambda^\omega\bigl([L_0^{T_{N}}] \bigr) 
=T_{N} \bigl(\Tr_\lambda^\omega ([L_0] )\bigr)
=T_2 \left( T_{\frac N2} \bigl( \Tr_\lambda^\omega([L_0] )\bigr)  \right)
= \left( \Tr_\lambda^\omega\bigl([L_0^{T_{N/2}}] \bigr) \right)^2 -2
$$
is a linear combination of monomials $Z_1^{m_1}Z_\infty^{m_2}$ with $m_1$, $m_2\in \{-N, 0, +N\}$. As a consequence, $\Tr_\lambda^\omega\bigl([L_0^{T_{N}}] \bigr) $ is again central in $\TT$, which concludes the proof as before.
\end{proof}

We now address Lemma~\ref{lem:ChebSkeinRelationPuncTorus}. 
\begin{lem}
\label{lem:ChebSkeinRelationPuncTorus2}
Suppose that $A^4$ is a primitive $N$--root of unity. 
In the once-punctured torus $T$, let $L_0$, $L_\infty$, $L_1$ and $L_{-1}$ be the curves represented in Figure~{\upshape\ref{fig:PuncturedTorus2}} (or Figure~{\upshape\ref{fig:PuncturedTorus}}). Considering these curves as knots in $T\times [0,1]$ and endowing them with the vertical framing,
$$
[L_0^{T_N}] [L_\infty^{T_N}] = A^{-N^2} [L_1^{T_N}]+ A^{N^2} [L_{-1}^{T_N}]
$$
in the skein algebra $\mathcal S^A(T)$. 
\end{lem}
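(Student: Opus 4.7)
The plan is to establish the identity by applying the injective quantum trace homomorphism $\Tr_\lambda^\omega \colon \mathcal S^A(T) \to \mathcal T^\omega(\lambda)$ for the ideal triangulation $\lambda$ of the once-punctured torus shown in Figure~\ref{fig:PuncturedTorus2}, whose edges are $e_0, e_1, e_\infty$ with corresponding Chekhov-Fock generators $Z_0, Z_1, Z_\infty$, and then verifying the resulting identity inside $\mathcal T^\omega(\lambda)$. Injectivity of the quantum trace is \cite[Prop.~29]{BonWon1}, and this is the same strategy already used for Lemma~\ref{lem:CentralPuncturedTorus2} just above. Crucially, we shall not invoke Theorem~\ref{thm:ChebyQTracesFrob} or the Chebyshev homomorphism $\mathbf{T}^A$, whose well-definedness is precisely what the current lemma is being used to establish; only Proposition~\ref{prop:QTraceChebyshev1}, obtained by a direct biangle-and-triangle computation, is needed.

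Since each of the four curves $L_0, L_\infty, L_1, L_{-1}$ is $\lambda$-simple, Proposition~\ref{prop:QTraceChebyshev1} provides short, explicit formulas for $\Tr_\lambda^\omega \bigl([L_\bullet^{T_N}]\bigr)$ as a sum of Weyl-ordered monomials whose exponents lie in $\{-N, +N\}$. The curves $L_0$ and $L_\infty$ each cross two edges of $\lambda$ and yield two admissible monomials apiece, while $L_1$ and $L_{-1}$ each cross three edges and yield a handful of admissible monomials apiece, determined by the pattern of left/right adjacencies that can be read directly off Figure~\ref{fig:PuncturedTorus2}. I would then multiply out $\Tr_\lambda^\omega\bigl([L_0^{T_N}]\bigr) \cdot \Tr_\lambda^\omega\bigl([L_\infty^{T_N}]\bigr)$ by expanding each of the four cross-terms and bringing the result into Weyl-normal form via the skew-commutation $Z_iZ_j = \omega^{2\sigma_{ij}} Z_jZ_i$. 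Reordering $N$-th powers of generators past one another produces phases $\omega^{2\sigma_{ij} N^2}$ which, since $A = \omega^{-2}$ and each $\sigma_{ij} = \pm 2$ on the once-punctured torus, collapse to powers of $A^{\pm N^2}$, matching the prefactors on the right-hand side.

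The principal obstacle is the combinatorial bookkeeping: identifying the pairing between each product of left-hand monomials and a unique monomial on the right, and checking that the Weyl-reordering phases produce exactly the coefficients $A^{\mp N^2}$ predicted. A useful sanity check is that specializing to a root $A$ with $A^{N^2} = \pm 1$ collapses the target identity to the classical Fricke trace identity $\Tr(r(L_0))\Tr(r(L_\infty)) = \Tr(r(L_1)) + \Tr(r(L_{-1}))$ in $\SL(\C)$, since the diagonal curves $L_1, L_{-1}$ correspond to $L_0 L_\infty$ and $L_0 L_\infty^{-1}$ in $\pi_1(T)$; the general identity is the quantum deformation of this, with the $A^{\pm N^2}$ phases tracking the non-commutativity. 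Once the pairing is made explicit, the remaining verification reduces to a finite check of exponents, and the injectivity of $\Tr_\lambda^\omega$ transports the identity back to $\mathcal S^A(T)$.
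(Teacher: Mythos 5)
Your overall strategy (apply the injective quantum trace $\Tr_\lambda^\omega$ of \cite[Prop.~29]{BonWon1} and verify the identity inside $\mathcal T^\omega(\lambda)$, using only the computations of \S\ref{sect:QTraceChebyshev1} and avoiding any appeal to $\mathbf T^A$ or Theorem~\ref{thm:ChebyQTracesFrob}) is exactly the paper's, and the paper even remarks that, once the four quantum traces are known, a brute-force multiplication of the kind you describe would finish the argument. But there is a genuine gap in how you propose to obtain those quantum traces: you assert that all four curves are $\lambda$--simple, and this is false for $L_{-1}$. For the triangulation of Figure~\ref{fig:PuncturedTorus2}, the curve $L_{-1}$ crosses the edge $e_1$ twice (its quantum trace involves monomials in $Z_1^{\pm 2N}$), so it meets each of the two triangles of $\lambda$ in two arcs rather than one. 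Proposition~\ref{prop:QTraceChebyshev1} is stated, and proved, only for $\lambda$--simple skeins, precisely because when a triangle is crossed by more than one arc the contributions of that triangle no longer commute and the order in which they are multiplied matters; so you cannot simply read off $\Tr_\lambda^\omega\bigl([L_{-1}^{T_N}]\bigr)$ from that proposition. (A symptom that the bookkeeping was not actually carried out: $L_0$ and $L_\infty$ contribute three admissible monomials each, not two, e.g.\ $\Tr_\lambda^\omega\bigl([L_0^{T_N}]\bigr)=[Z_\infty^{-N}Z_1^{-N}]+[Z_\infty^{-N}Z_1^{N}]+[Z_\infty^{N}Z_1^{N}]$.)

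The paper fills this gap by redoing the state-sum computation for $L_{-1}$ by hand: it fixes the relative elevations of the two strands of $L_{-1}$ over each triangle $T_j$, works in the embedding $\mathcal T^\omega(\lambda)\to\mathcal T^\omega(T_1)\otimes\mathcal T^\omega(T_2)$, and keeps track of the order of the two triangle contributions, obtaining an explicit five-term formula for $\Tr_\lambda^\omega\bigl([L_{-1}^{T_N}]\bigr)$. It then concludes not by brute-force multiplication but by observing that each $\Tr_\lambda^\omega\bigl([L_i^{T_N}]\bigr)$ equals $\mathbf F^\omega\bigl(\Tr_\lambda^\iota([L_i])\bigr)$ and pushing the classical skein relation $[L_0][L_\infty]=\epsilon^{-1}[L_1]+\epsilon[L_{-1}]$ in $\mathcal S^\epsilon(T)$ through the Frobenius homomorphism (which, unlike $\mathbf T^A$, is already known to be well defined). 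Your plan can be repaired either by adopting that elevation-tracking computation for $L_{-1}$, or by carrying out the product expansion with Weyl reordering once a correct formula for $\Tr_\lambda^\omega\bigl([L_{-1}^{T_N}]\bigr)$ is in hand; as written, the step ``apply Proposition~\ref{prop:QTraceChebyshev1} to $L_{-1}$'' does not go through.
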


\begin{proof}
As usual, set $\epsilon=A^{N^2}$ and $\iota = \omega^{N^2}$. 
For the ideal triangulation $\lambda$ indicated in Figure~\ref{fig:PuncturedTorus2}, we can apply  Proposition~\ref{prop:QTraceChebyshev1} and Corollary~\ref{cor:QTraceChebFrobSpecial} to the simple skeins $[L_0]$, $[L_\infty]$, $[L_1]$, $[L_{-1}] \in \mathcal S^A(T)$. Indeed, the edge $e_\infty$ is crossed exactly once by the projections of $L_0$, $L_1$ and $L_{-1}$ to $T$, while $L_\infty$ crosses the edge $e_0$ once. Therefore, 
$ \Tr^\omega _\lambda\bigl ([L_i^{T_N }] \bigr) =\mathbf F^\omega \Bigl( \Tr^\iota _\lambda\bigl ([L_i] \bigr)\Bigr) $ for each $i=0$, $\infty$, $1$, $-1$. 

The skeins $[L_0]$, $[L_\infty]$, $[L_1]$, $[L_{-1}] \in \mathcal S^\epsilon(T)$ satisfy the relation
$$
[L_0] [L_\infty] = \epsilon^{-1} [L_1]+ \epsilon [L_{-1}]. 
$$
Applying the algebra homomorphisms $\Tr^\iota _\lambda \colon \mathcal S^\epsilon(T)  \to \TTT$ and  $\mathbf F^\omega \colon \TTT \to \TT$ on both sides of this equation, and using the property that $ \Tr^\omega _\lambda\bigl ([L_i^{T_N }] \bigr) =\mathbf F^\omega \circ \Tr^\iota _\lambda\bigl ([L_i] \bigr) $, it follows that 
$$
\Tr_\lambda^\omega\bigl([L_0^{T_N}]\bigr)\, \Tr_\lambda^\omega\bigl([L_\infty^{T_N}]\bigr) = \epsilon^{-1}\,  \Tr_\lambda^\omega\bigl([L_1^{T_N}]\bigr) + \epsilon  \,\Tr_\lambda^\omega\bigl([L_{-1}^{T_N}]\bigr) .
$$

The result then follows from the injectivity \cite[Prop.~29]{BonWon1} of the quantum trace homomorphism $\Tr_\lambda^\omega \colon  \mathcal S^A(T) \to \TT$, and from the fact that $\epsilon = A^{N^2}$. 
\end{proof}

\begin{figure}[htbp]

\SetLabels
( .12*-.3 ) $L_1 $ \\
(.37 * -.35) $ L_0$ \\
( .63* -.3) $L_\infty $ \\
( .88 * -.3) $L_{-1}$\\
( .285*.59 ) $e_1 $ \\
( .37*.59 ) $e_2 $ \\
( .46*.59 ) $e_3 $ \\
( .025*-.2 ) $T_1 $ \\
( .025*.8 ) $T_2 $ \\
\endSetLabels
\centerline{\AffixLabels{ \includegraphics{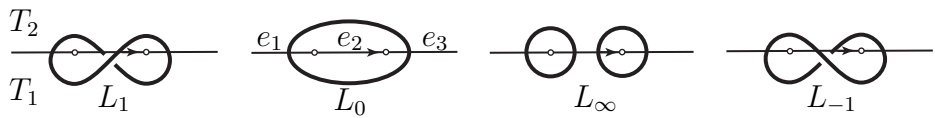} }}
\vskip 10pt

\caption{The $1$--submanifolds $L_1$, $L_0$ and  $L_\infty$  in the twice-punctured plane, with an ideal triangulation $\lambda$}
\label{fig:PuncturedSphere2}
\end{figure}

The proof of Lemma~\ref{lem:ChebSkeinRelationPuncSphere} is very similar. 
\begin{lem}
\label{lem:ChebSkeinRelationPuncSphere2}
Suppose that $A^2$ is a primitive $N$--root of unity with $N$ odd. 
In the twice-punctured plane $U$, let $L_1$, $L_{-1}$, $L_0$ and $L_\infty$  be the $1$--submanifolds represented in Figure~{\upshape\ref{fig:PuncturedSphere2}} (or Figure~{\upshape\ref{fig:PuncturedSphere}}). Considering these submanifolds as links in $U\times [0,1]$ and endowing them with the vertical framing,
\begin{align*}
[L_1^{T_N}]&=  A^{-N^2} [L_0^{T_N}]+ A^{N^2}  [L_\infty^{T_N}]\\
\text{and } [L_{-1}^{T_N}]&=  A^{N^2} [L_0^{T_N}]+  A^{-N^2} [L_\infty^{T_N}]
\end{align*}
in the skein algebra $\mathcal S^A(U)$. 
\end{lem}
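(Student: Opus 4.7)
The plan is to follow the proof of Lemma~\ref{lem:ChebSkeinRelationPuncTorus2} line for line, adapted to the twice-punctured plane. First I would fix the ideal triangulation $\lambda$ of $U$ indicated in Figure~\ref{fig:PuncturedSphere2}, with edges $e_1$, $e_2$, $e_3$ and triangles $T_1$, $T_2$. Because the quantum trace $\Tr_\lambda^\omega\colon \mathcal S^A(U) \to \mathcal T^\omega(\lambda)$ is injective, it suffices to verify both identities in $\mathcal T^\omega(\lambda)$ after applying $\Tr_\lambda^\omega$. Although Theorem~\ref{thm:ChebyQTracesFrob} would give us exactly this compatibility for free, its proof depends on the present lemma through Theorem~\ref{thm:ChebSkeinRelation}, so I would refrain from using it and instead rely only on the direct state-sum computations of \S\ref{sect:QTraceChebyshev1}.

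The heart of the argument is to establish, for each $i \in \{-1, 0, 1, \infty\}$, the compatibility
$$
\Tr_\lambda^\omega\bigl([L_i^{T_N}]\bigr) = \mathbf F^\omega\bigl(\Tr_\lambda^\iota([L_i])\bigr).
$$
When $L_i$ is $\lambda$--simple this is immediate: Proposition~\ref{prop:QTraceChebyshev1} expresses $\Tr_\lambda^\omega([L_i^{T_N}])$ as a sum of Weyl-ordered monomials $[Z_1^{n_1}Z_2^{n_2}Z_3^{n_3}]$ with each $n_j\in\{\pm N\}$ over a set of admissible sign patterns, while Theorem~\ref{thm:QTrace} gives $\Tr_\lambda^\iota([L_i])$ by the identical admissible sum with exponents in $\{\pm 1\}$; since $\iota = \omega^{N^2}$, applying the Frobenius homomorphism replaces each generator by its $N$-th power and aligns the two expressions term by term. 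For any $L_i$ that is not $\lambda$--simple --- one that traverses a triangle of $\lambda$ in more than one arc --- I would imitate the $L_{-1}$ computation of Lemma~\ref{lem:ChebSkeinRelationPuncTorus2}: arrange the strands inside each triangle at distinct elevations chosen compatibly with a fixed orientation of the edges, split $U$ along the edges of $\lambda$ to embed $\mathcal T^\omega(\lambda)$ into $\mathcal T^\omega(T_1)\otimes\mathcal T^\omega(T_2)$, and run the state-sum analysis of \S\ref{sect:QTraceChebyshev1} within each triangle separately. Lemmas~\ref{lem:ChebyshevDifference}--\ref{lem:ChebyshevStay} again force all surviving contributions to have boundary exponents in $\{\pm N\}$, and after reordering the tensor factors the resulting expression matches $\mathbf F^\omega\bigl(\Tr_\lambda^\iota([L_i])\bigr)$.

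Granting these four compatibility relations, the lemma follows purely formally. The Kauffman skein relation in $\mathcal S^\epsilon(U)$, with $\epsilon = \iota^{-2} = A^{N^2}$, applied to each of the two crossings in Figure~\ref{fig:PuncturedSphere2} gives
$$
[L_1] = \epsilon^{-1}[L_0] + \epsilon [L_\infty] \quad\text{and}\quad [L_{-1}] = \epsilon [L_0] + \epsilon^{-1}[L_\infty]
$$
in $\mathcal S^\epsilon(U)$. Applying the algebra homomorphism $\Tr_\lambda^\iota$, then the algebra homomorphism $\mathbf F^\omega$, and finally invoking the Frobenius compatibility to rewrite each $\mathbf F^\omega\bigl(\Tr_\lambda^\iota([L_i])\bigr)$ as $\Tr_\lambda^\omega([L_i^{T_N}])$, produces the identities sought in $\mathcal T^\omega(\lambda)$; injectivity of $\Tr_\lambda^\omega$ then transports them back to $\mathcal S^A(U)$.

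The principal obstacle will be the elevation bookkeeping required for any non-$\lambda$--simple $L_i$, since Proposition~\ref{prop:QTraceChebyshev1} was derived only for $\lambda$--simple skeins. The $L_{-1}$ computation of Lemma~\ref{lem:ChebSkeinRelationPuncTorus2} provides an encouraging precedent: the key cancellations of \S\ref{sect:QTraceChebyshev1} survive the splitting of the surface into individual triangles, provided one is careful about which strand is assigned the higher elevation at each edge. In the planar geometry of Figure~\ref{fig:PuncturedSphere2} the bookkeeping should in fact be slightly easier than on the torus, and the two identities of the lemma will be proved by strictly parallel arguments, differing only by the sign swap reflecting the two possible crossing types.
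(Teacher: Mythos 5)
Your proposal is correct, but it organizes the argument differently from the paper's own proof of this lemma. The paper's proof is the ``brute force'' variant: it runs the extended state-sum computation of \S\ref{sect:QTraceChebyshev1} (with the elevation conventions of Figure~\ref{fig:PuncturedSphere2}, which kill all non-trivial biangle contributions) to write out $\Tr_\lambda^\omega\bigl([L_1^{T_N}]\bigr)$, $\Tr_\lambda^\omega\bigl([L_0^{T_N}]\bigr)$ and $\Tr_\lambda^\omega\bigl([L_\infty^{T_N}]\bigr)$ as explicit Weyl-ordered Laurent polynomials in $Z_1^{\pm N}, Z_2^{\pm N}, Z_3^{\pm N}$, checks the identity $\Tr_\lambda^\omega\bigl([L_{1}^{T_N}]\bigr) = A^{-N^2} \Tr_\lambda^\omega\bigl([L_{0}^{T_N}]\bigr) + A^{N^2} \Tr_\lambda^\omega\bigl([L_{\infty}^{T_N}]\bigr)$ by direct inspection, and concludes by injectivity of $\Tr_\lambda^\omega$ (the second identity ``by the same method''). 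You instead establish the Frobenius compatibility $\Tr_\lambda^\omega\bigl([L_i^{T_N}]\bigr) = \mathbf F^\omega\bigl(\Tr_\lambda^\iota([L_i])\bigr)$ for each of the four curves and then transport the classical skein relations in $\mathcal S^\epsilon(U)$ through $\mathbf F^\omega\circ\Tr_\lambda^\iota$ --- which is precisely the strategy the paper adopts for the once-punctured torus in Lemma~\ref{lem:ChebSkeinRelationPuncTorus2}, and which the authors themselves remark there is interchangeable with the brute-force comparison. The computational cost is essentially the same either way: in both routes the real work is the extension of Proposition~\ref{prop:QTraceChebyshev1} to the non-$\lambda$-simple curves $L_1$ and $L_{-1}$ (each crosses $e_2$ twice, so each triangle twice), with the order-of-elevation bookkeeping you flag; the classical traces $\Tr_\lambda^\iota([L_i])$ you additionally need come for free from the same state sums with exponents $\pm1$. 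What your route buys is that both identities of the lemma drop out simultaneously from the single $\epsilon$-level skein relation, and the $A^{\pm N^2}$ coefficients are explained conceptually rather than read off from the computed formulas; you are also right, and it is an essential point, that Theorem~\ref{thm:ChebyQTracesFrob} itself cannot be invoked here without circularity, so the compatibilities must be verified curve by curve as you propose.
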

\begin{proof} Just observe that, for the ideal triangulation $\lambda$ represented in Figure~\ref{fig:PuncturedSphere2}, we can apply Proposition~\ref{prop:QTraceChebyshev1} and Corollary~\ref{cor:QTraceChebFrobSpecial} to the simple skeins represented by $L_1$, $L_0$, $L_{-1}$, as well as  each of the two components of $L_\infty$. The proof is then identical to that of Lemma~\ref{lem:ChebSkeinRelationPuncTorus2}. 
\end{proof}

This takes care of the proofs that we had postponed up to this point, and in particular completes the proofs of Theorems~\ref{thm:ChebyshevCentral} and \ref{thm:ChebSkeinRelation}. \qed

\subsection{Proof of Theorem~\ref{thm:ChebyQTracesFrob}}
\label{subsect:ProofChebyQTracesFrob}
Assuming that $A^4$ is a primitive $N$--root of unity, we want to prove that the diagram
$$
\xymatrix{
\SSS
 \ar[r]^{\Tr_{\lambda}^\omega} 
 & \TT\\
\SSSS
 \ar[r]^{\Tr_{\lambda}^\iota}
 \ar[u]^{\mathbf T^A}
 & \TTT
 \ar[u]_{\mathbf F^\omega}}
$$
is commutative. Namely, we need to show that $ \Tr^\omega _\lambda\bigl ([K^{T_N }] \bigr) =\mathbf F^\omega \bigl( \Tr^\iota _\lambda\bigl ([K] \bigr)\bigr) $ for every skein $[K] \in \SSSS$. 

Proposition~\ref{prop:QTraceChebyshev1} and Corollary~\ref{cor:QTraceChebFrobSpecial}  prove this property for a specific type of skeins. To extend it to all of $\SSSS$, it suffices to use the following property. Recall that a skein $[K] \in \SSS$ is \emph{simple} if it is represented by a framed knot $K \subset S \times [0,1]$ that projects to a simple closed curve in $S$ and is endowed with the vertical framing.

\begin{lem}
\label{lem:SimpleSkeinCuttingEdgeOnce}
Let $\lambda$ be an ideal triangulation of the surface $S$. The skein algebra $\SSS$ is generated by simple skeins $[K]$ that each cross some edge $e_i$ of $\lambda$ exactly once. 
\end{lem}
\begin{proof} This is an immediate consequence of arguments of Bullock in  \cite{Bull}. 

The consideration of a maximal tree in the dual graph of $\lambda$ provides a set $\{ e_{i_1}, e_{i_2}, \dots, e_{i_u}\}$ of edges of  $\lambda$ that split $S$ into a disk. By considering a handle decomposition of $S$ where the $e_{i_k}$ are the co-cores of the handles, Bullock  proves in \cite[Lemma~3]{Bull} that $\SSS$ is generated by simple skeins that cut each of these edges $e_{i_k}$ in 0 or 1 point.  Because the edges $e_{i_k}$ split the surface $S$ into a disk, we can eliminate from the list of these generators of $\SSS$ those which are disjoint from the $e_{i_k}$, since such simple skeins are scalar multiples of the identity $[\varnothing]$. Each of the remaining generators is a simple skein cutting  at least one edge  $e_{i_k}$ in a single point. 
\end{proof}

For each of the generators $[K]$ of $\SSS$ provided by Lemma~\ref{lem:SimpleSkeinCuttingEdgeOnce}, Proposition~\ref{prop:QTraceChebyshev1} and Corollary~\ref{cor:QTraceChebFrobSpecial} show that $ \Tr^\omega _\lambda\circ \mathbf T^A \bigl ([K] \bigr) =\mathbf F^\omega\circ \Tr^\iota _\lambda\bigl ([K] \bigr) $. Since all maps involved are algebra homomorphisms (using for $\mathbf T^A$ the fact that we just completed the proof of Theorem~\ref{thm:ChebSkeinRelation}), it follows that $ \Tr^\omega _\lambda\circ \mathbf T^A =\mathbf F^\omega\circ \Tr^\iota _\lambda$ over all of $\SSSS$. 
 \qed


\begin{thebibliography}{XXXX}

\newcommand{\bibsub}[1]{\kern-0pt${}_{#1}$\kern -0.5pt}

\bibitem[Ba]{Barr} John W. Barrett, \emph{Skein spaces and spin structures}, Math. Proc. Cambridge Philos. Soc.  126  (1999), 267--275.

\bibitem[BHMV]{BHMV}
Christian Blanchet, Nathan Habegger, Gregor Masbaum,  Pierre Vogel,
\emph{Topological quantum field theories derived from the {K}auffman
  bracket}, Topology 34 (1995), 883--927.

\bibitem[BoL]{BonLiu} Francis Bonahon, Xiaobo Liu, \emph{Representations
of
the quantum Teichm\"uller
space and invariants of
surface diffeomorphisms}, Geom. Topol. 11 (2007), 889--938. 

\bibitem[BoW\bibsub1]{BonWon1} Francis Bonahon, Helen Wong, \emph{Quantum traces for representations of surface groups in $\SL(\C)$}, Geom. Topol. 15 (2011),
              1569--1615.

\bibitem[BoW\bibsub2]{BonWon2} Francis Bonahon, Helen Wong, \emph{Kauffman brackets, character varieties and triangulations of surfaces}, in \emph{Topology and Geometry in Dimension Three: Triangulations, Invariants, and Geometric Structures} (W. Li, L. Bartolini,  J. Johnson, F.  Luo,  R. Myers,  J. H. Rubinstein eds.), Contemporary Mathematics 560, American Math.  Society, 2011. 

\bibitem[BoW\bibsub3]{BonWon3} Francis Bonahon, Helen Wong, \emph{Representations of the Kauffman bracket skein algebra II: punctured surfaces}, preprint, \texttt{arXiv:1206.1639}.

\bibitem[BoW\bibsub4]{BonWon4} Francis Bonahon, Helen Wong, \emph{Representations of the Kauffman bracket skein algebra III: closed surfaces and naturality}, in preparation. 

\bibitem[BoW\bibsub5]{BonWon5} Francis Bonahon, Helen Wong, \emph{The Witten-Reshetikhin-Turaev representation of the Kauffman bracket skein algebra}, submitted for publication, \texttt{arXiv:1309.0921}. 

\bibitem[Bu\bibsub1]{Bull1} Doug Bullock, \emph{Estimating a skein module with $\SL(\C)$ characters}, Proc. Amer. Math. Soc. 125 (1997), 1835--1839.

\bibitem[Bu\bibsub2]{Bull2} Doug Bullock, \emph{Rings of $\SL(\C)$--characters and the Kauffman bracket skein module},  Comment. Math. Helv.  72  (1997),   521--542. 

\bibitem[Bu\bibsub3]{Bull} Doug Bullock, \emph{A finite set of generators for the Kauffman bracket skein algebra},
Math. Z. 231 (1999), 91--101. 

\bibitem[BuFK\bibsub1]{BFK1} Doug Bullock, Charles Frohman, Joanna Kania-Bartoszy\'nska, \emph{Understanding the Kauffman bracket skein module},  J. Knot Theory Ramifications  8  (1999),  265--277. 


\bibitem[BuFK\bibsub2]{BFK2} Doug Bullock, Charles Frohman, Joanna Kania-Bartoszy\'nska, \emph{The Kauffman bracket skein as an algebra of observables},   Proc. Amer. Math. Soc.  130  (2002), 2479--2485.



\bibitem[BuP]{BullPrz} Doug Bullock, J\'ozef H. Przytycki, \emph{Multiplicative structure of Kauffman bracket skein module quantizations}, Proc. Amer. Math. Soc.  128  (2000), 923-931. 

\bibitem[CaFS]{CFS} Scott J. Carter,  Daniel E. Flath, Masahico Saito,
\emph{The classical and quantum $6j$--symbols},
Mathematical Notes 43, Princeton University Press,  1995. 

\bibitem[ChF\bibsub1]{CheFoc1} Leonid O.
Chekhov, Vladimir V. Fock,
 \emph{Quantum Teichm\"uller spaces},
Theor. Math. Phys. 120 (1999), 1245--1259. 

\bibitem[ChF\bibsub2]{CheFoc2} Leonid O.
Chekhov, Vladimir V. Fock,
 \emph{Observables in 3D gravity
and geodesic algebras}, in:
\emph{Quantum groups and
integrable systems} (Prague,
2000),  Czechoslovak J. Phys.  50 
(2000),   1201--1208.

\bibitem[Fo]{Foc} Vladimir V. Fock, 
 \emph{Dual Teichm\"uller
spaces},
unpublished preprint,
1997, \texttt{arXiv:Math/dg-ga/9702018} . 

\bibitem[FoG]{FockG} Vladimir V. Fock, Alexander B. Goncharov,
\emph{Cluster ensembles, quantization and the dilogarithm},  Ann. Sci. \'Ec. Norm. Sup\'er.  42 (2009), 865--930.

\bibitem[FrG]{FroGel} Charles Frohman, R\u azvan Gelca, \emph{Skein modules and the noncommutative torus},  Trans. Amer. Math. Soc.  352  (2000), 4877--4888.

\bibitem[Go]{Gold} William M. Goldman, \emph{Topological components of spaces of representations},  Invent. Math. 93 (1988), 557--607.

\bibitem[HaP]{HavPos} Miloslav Havl\'\i\v cek, Severin Po\v sta, \emph{On the classification of irreducible finite-dimensional representations of $U'_q(\mathrm {so}_3)$ algebra},  J. Math. Phys. 42 (2001), 472--500. 

\bibitem[He]{Hell} Heinz Helling, \emph{Diskrete Untergruppen von $\SL({\R})$}, Invent. Math.  17  (1972), 217--229.

\bibitem[Kash]{Kash} Rinat Kashaev, 
\emph{Quantization of
Teichm\"uller spaces and the
quantum dilogarithm},  Lett. Math.
Phys.  43  (1998),  105--115.


\bibitem[Kass]{Kassel} Christian Kassel, 
\emph{Quantum groups}, Graduate
Texts in Mathematics vol. 155,
Springer-Verlag, New York, 1995. 

\bibitem[L\^e]{Le} Thang T. Q. L\^e, \emph{On Kauffman bracket skein modules at root of unity},  to appear in Alg. Geom. Topology, \texttt{arXiv:1312.3705}. 

\bibitem[Lic\bibsub1]{Lick} W. B. Raymond Lickorish, \emph{An introduction to knot theory}, Graduate Texts in Math.  175, Springer-Verlag, 1997.

\bibitem[Lic\bibsub2]{Lick2} W. B. Raymond Lickorish,  \emph{Quantum invariants of $3$--manifolds}, in: \emph{Handbook of geometric topology}, 707Ð734, North-Holland, Amsterdam, 2002. 


\bibitem[MuFK]{Mum} David Mumford, John Fogarty, Frances Kirwan, \emph{Geometric invariant theory. Third edition},  Ergebnisse der Mathematik und ihrer Grenzgebiete  34, Springer-Verlag, 1994. 


\bibitem[PrS]{PrzS} Jozef H. Przytycki, Adam S. Sikora, \emph{On skein algebras and $\SL(\C)$-character varieties},
Topology 39 (2000), 115--148. 

\bibitem[ReT] {ReshTur} Nicolai Y. Reshetikhin and Vladimir G. Turaev, \emph{Invariants of $3$--manifolds via link polynomials and quantum groups}, Invent. Math. 103 (1991), 547--597.

\bibitem[Rob]{RobertsIrred}
Justin Roberts, \emph{Irreducibility of some quantum representations of mapping class groups}, Knots in Hellas '98, Vol. 3 (Delphi), J. Knot Theory Ramifications 10 (2001), 763--767.


\bibitem[Tu\bibsub1]{Tur} Vladimir G. Turaev, \emph{Skein quantization of Poisson algebras of loops on surfaces},  Ann. Sci. \'Ecole Norm. Sup.   24  (1991), 635--704.

\bibitem[Tu\bibsub2]{TuraevBook}
Vladimir~G. Turaev.
 {\em Quantum invariants of knots and 3-manifolds}, 
  de Gruyter Studies in Mathematics Vol. 18, 
 Walter de Gruyter \& Co., Berlin, 1994.
 
\bibitem[Wit]{Witten} Edward Witten, \emph{Quantum Field Theory and the Jones Polynomial},  Commun. Math. Phys. 121 (1989), 351--399.

\end{thebibliography}
\end{document}